\begin{document}

\newcommand{\dc}{\#^\mathrm{dc}}
\newcommand{\ent}{\#^\mathrm{ent}}
\newcommand{\e}{\varepsilon}
\newcommand{\tp}{\mathrm{tp}}
\newcommand{\GH}{\mathrm{GH}}
\newcommand{\Lcal}{\mathcal{L}}
\newcommand{\frk}{\mathfrak}
\newcommand{\ol}{\overline}
\newcommand{\otherpaper}{\,}

\providecommand{\dotdiv}{% Don't redefine it if available
  \mathbin{% We want a binary operation
    \vphantom{+}% The same height as a plus or minus
    \text{% Change size in sub/superscripts
      \mathsurround=0pt % To be on the safe side
      \ooalign{% Superimpose the two symbols
        \noalign{\kern-.35ex}% but the dot is raised a bit
        \hidewidth$\smash{\cdot}$\hidewidth\cr % Dot
        \noalign{\kern.35ex}% Backup for vertical alignment
        $-$\cr % Minus
      }%
    }%
  }%
}

\newcommand*{\approxx}{% 
  \mathrel{\vcenter{\offinterlineskip
  \hbox{$\sim$}\vskip-.35ex\hbox{$\sim$}\vskip-.35ex\hbox{$\sim$}}}}

\theoremstyle{plain}
\newtheorem{thm}{Theorem}[section]
\newtheorem{prop}[thm]{Proposition}
\newtheorem{lem}[thm]{Lemma}
\newtheorem{cor}[thm]{Corollary}
\newtheorem{fact}[thm]{Fact}
\newtheorem{ex}[thm]{Example}
\newtheorem{quest}[thm]{Question}

\theoremstyle{definition}
\newtheorem{defn}[thm]{Definition}
\newtheorem{nota}[thm]{Notation}

\newenvironment{desc}{\renewcommand{\proofname}{Description}\begin{proof}}{\end{proof}}

\newtheorem{innercustomgeneric}{\customgenericname}
\providecommand{\customgenericname}{}
\newcommand{\newcustomtheorem}[2]{%
  \newenvironment{#1}[1]
  {%
   \renewcommand\customgenericname{#2}%
   \renewcommand\theinnercustomgeneric{##1}%
   \innercustomgeneric
  }
  {\endinnercustomgeneric}
}

\makeatletter
\DeclareRobustCommand{\cset}{\@ifstar\star@cset\normal@cset}
\newcommand{\star@cset}[1]{\left\llbracket#1\right\rrbracket}
\newcommand{\normal@cset}[2][]{\mathopen{#1\llbracket}#2\mathclose{#1\rrbracket}}
\makeatother

\newcustomtheorem{customthm}{Theorem}
\newcustomtheorem{customlemma}{Lemma}
\newcustomtheorem{customprop}{Proposition}

\title{Approximate Categoricity in Continuous Logic}
\author{James Hanson}
\email{jehanson2@wisc.edu}
\address{Department of Mathematics, University of Wisconsin--Madison, 480 Lincoln Dr., Madison, WI 53706}
\date{\today}
% \date
\keywords{continuous logic, categoricity, approximate isomorphism}
\subjclass[2020]{03C66, 03C35, 03C45}

\begin{abstract}
We explore approximate categoricity in the context of distortion systems, introduced in our previous paper \cite{Hansona}, which are a mild generalization of perturbation systems, introduced by Ben Yaacov \cite{OnPert}. We extend Ben Yaacov's Ryll-Nardzewski style characterization of separably approximately categorical theories from the context of perturbation systems to that of distortion systems. We also make progress towards an analog of Morley's theorem for inseparable approximate categoricity, showing that if there is some uncountable cardinal $\kappa$ such that every model of size $\kappa$ is `approximately saturated,' in the appropriate sense, then the same is true for all uncountable cardinalities. Finally we present some examples of these phenomena and highlight an apparent interaction between ordinary separable categoricity and inseparable approximate categoricity.
\end{abstract}

\maketitle
\vspace{-1em}
\section*{Introduction}

This paper is a direct continuation of \cite{Hansona}, in which we generalized perturbation systems, a notion of approximate isomorphism for continuous logic introduced by Ben Yaacov in \cite{OnPert}, to `distortion systems' in order to accommodate the Gromov-Hausdorff and Kadets distances. Ben Yaacov introduces perturbation systems in order to generalize an unpublished result of Henson's,
%C. Ward
 specifically a Ryll-Nardzewski type characterization of Banach space theories that are `approximately separably categorical' with regards to the Banach-Mazur distance. Ben Yaacov's formalism requires that approximate isomorphisms be witnessed by uniformly continuous bijections with uniformly continuous inverses, meaning that it cannot accommodate things like the Gromov-Hausdorff and Kadets distances. Our broader formalism also allowed for generalizations of some of the results of \cite{MSA} to perturbation systems, such as explicit Scott sentences axiomatizing the class of Banach spaces with Banach-Mazur distance $0$ to a given Banach space.

In this paper we will first recall the relevant results of \cite{Hansona} as well as develop some additional machinery to deal with types over sets of parameters. In particular we will need to introduce the correct analog of the $d$-metric on types, which is typically distinct from the metric $\delta_\Delta$ induced on type spaces by the distortion system $\Delta$. Then we will develop a generalization of atomic types to this approximate context, which are necessary to state and prove the main results of this paper. This generalization is a specific instance of concepts developed in \cite{BenYaacov2008}. Finally, we will present some examples of theories with various combinations of exact and approximate categoricity, highlighting a gap in the currently known examples. Our explicit examples are in the context of pure metric spaces with Gromov-Hausdorff and Lipschitz distances, but we should note that there is an earlier explicit construction due to Tellez  of a Banach-Mazur-$\omega$-categorical Banach space that is not $\omega$-categorical \cite{NanoThesis}.

The main results of this paper are an extension of Ben Yaacov's separable categoricity result to distortion systems in general and one direction of an approximate Morley's theorem, namely we will define an appropriate notion of `$\Delta$\nobreakdash-saturation' for a distortion system $\Delta$, show that if a theory is $\Delta$\nobreakdash-$\kappa$\nobreakdash-categorical for some uncountable $\kappa$, then every model of it of density character $\kappa$ is $\Delta$-saturated, and then show that if every model of density character $\kappa$ is $\Delta$\nobreakdash-saturated for some uncountable $\kappa$, then the same is true for any uncountable $\lambda$. The difficulty arises in trying to show that two inseparable $\Delta$-saturated structures of the same density character are `almost $\Delta$-approximately isomorphic', where the `almost' is a technical weakening that's only non-trivial in certain poorly behaved `irregular' distortion systems (all of the four motivating examples are regular). The best we seem to be able to get is that they are `potentially almost $\Delta$-approximately isomorphic,' i.e.\ almost $\Delta$-approximately isomorphic in a forcing extension in which they are collapsed to being separable.

For the general formalism of continuous logic and the majority of the notation used here, see \cite{MTFMS}. As opposed to \cite{MTFMS}, however, we (implicitly) opt for the extended definition of formula they allude to in and before Proposition 9.3. Specifically, we allow arbitrary continuous functions from $\mathbb{R}^\omega$ to $\mathbb{R}$ as connectives. This means that formulas may use countably many constants and have countably many free variables, but also, most importantly, that the collection of formulas is closed under uniformly convergent limits up to logical equivalence.  Note that this does not increase the expressiveness of first-order continuous logic, despite the presence of infinitary connectives. Continuous generalizations of $\Lcal_{\omega_1\omega}$ (such as those studied in \cite{MSA}) are fundamentally more expressive because the infinitary connectives introduced there ($\sup$ and $\inf$ of sequences of formulas) are not continuous on $\mathbb{R}^\omega$ (although the resulting formulas in \cite{MSA} are still continuous on structures).

 Here are the rest of the notational conventions used in this paper.

\begin{nota}
\hfill
\begin{itemize}
\item Let $(X,d)$ be a metric space. Let $x\in X$, $A \subseteq X$, and $\varepsilon > 0$.
\begin{enumerate}[label=(\roman*)]
    \item $B_{\leq \varepsilon}^d(x) = \{y \in X : d(x,y) \leq \varepsilon \}$

%\item $B_{< \varepsilon}^d(x) = \{ y \in X : d(x,y) < \varepsilon \}$

\item $d(x,A) = \inf\{d(x,y):y \in A\}$

%\item $A^{d \leq \varepsilon} = \{y \in X : d(x,A) \leq \varepsilon\}$

%\item $A^{d < \varepsilon} = \{y \in X : d(x,A) < \varepsilon \} = \bigcup_{y\in A}B_{<\varepsilon}(y) $

\item $\dc (A,d)$, the metric density character of $A$ with regards to $d$, is the minimum cardinality of a $d$-dense subset of $A$.

\item $\ent_{>\e} (A,d)$, the $\e$-metric entropy of $A$ with regards to $d$, is $\sup|B|$, where $B\subseteq A$ ranges over $({>}\e)$-separated sets.
\end{enumerate}
We will drop the $d$ if the metric is clear from context.

\item To avoid confusion with the established logical roles of $\wedge$ and $\vee$ we will avoid using this notation for $\min$ and $\max$ but in the interest of conciseness we will let $x \uparrow y \coloneqq \max(x,y)$ and $x \downarrow y \coloneqq \min(x,y)$. Note that  $\mathfrak{M} \models \varphi \uparrow \psi \leq 0$ if and only if $\mathfrak{M} \models \varphi \leq 0$ and $\mathfrak{M} \models \psi \leq 0$, and likewise $\mathfrak{M} \models \varphi \downarrow \psi \leq 0$ if and only if $\mathfrak{M} \models \varphi \leq 0$ or $\mathfrak{M} \models \psi \leq 0$.

We take $\uparrow$ and $\downarrow$ to have higher binding precedence than addition but lower binding precedence than multiplication, so for example $ab\uparrow c+d = ((ab)\uparrow c)+d$. We will never write expressions like $x\uparrow a \downarrow b$.

%\item We will write $[x]_a^b$ for $(x\uparrow a)\downarrow b$. Note that $[x]_a^b = x$ for $x \in [a,b]$, $[x]_a^b = a$ if $x\leq a$, and $[x]_a^b = b$ if $x \geq b$.

\item If $\varphi$ is a formula and $r$ is a real number (or perhaps another formula), we will write expressions such as $\cset{\varphi < r}$ and $\cset{\varphi \geq r}$ to represent the sets of types (in some type space that will be clear from context) satisfying the given condition.
\end{itemize}

\end{nota}

Here we will fix a few basic concepts from \cite{Hansona}.

\begin{defn}
Fix a language $\Lcal$ with sorts $\mathcal{S}$, $\Lcal$-pre-structures $\frk{M}$ and $\frk{N}$, and tuples $\bar{m} \in \frk{M}$ and $\bar{n} \in \frk{N}$ of the same length with elements in the same sorts.
\begin{itemize}
    \item[(i)]  The \emph{sort-by-sort product of $\frk{M}$ and $\frk{N}$}, written $\frk{M}\times_\mathcal{S}\frk{N}$, is the collection $\bigsqcup_{s\in\mathcal{S}} s(\frk{M})\times s(\frk{N})$. If $\Lcal$ is single-sorted we will take $\times_\mathcal{S}$ to be the ordinary Cartesian product.
    \item[(ii)] A \emph{correlation between $\frk{M}$ and $\frk{N}$} is a set $R \subseteq \frk{M} \times_\mathcal{S} \frk{N}$ such that for each sort $s$,  $R\upharpoonright s \coloneqq R\upharpoonright s(\frk{M})\times s(\frk{N})$ is a total surjective relation. We will write $\mathrm{cor}(\frk{M},\bar{m};\frk{N},\bar{n})$ for the collection of correlations between $\frk{M}$ and $\frk{N}$ such that for each index $i$ less than the length of $\bar{m}$, $(m_i,n_i)\in R$ (for any binary relation we will abbreviate this condition as $(\bar{m},\bar{n})\in R$). If $\bar{m}$ and $\bar{n}$ are empty we will write $\mathrm{cor}(\frk{M},\frk{N})$. 
    \item[(iii)] An \emph{almost correlation between $\frk{M}$ and $\frk{N}$} is a correlation between dense sub-pre-structures of $\frk{M}$ and $\frk{N}$. We will write $\mathrm{acor}(\frk{M},\bar{n};\frk{N},\bar{m})$ for the collection of almost correlations $R$ between $\frk{M}$ and $\frk{N}$ such that $(\bar{m},\bar{n})\in R$.
    
\end{itemize}
\end{defn}

\section{Distortion Systems} \label{sect:dist-sys}

Now we will recall definitions and results from our previous paper \cite{Hansona} that are integral to the results in this paper. To see proofs of these results refer to \cite{Hansona}.

A distortion system is a generalization of the definition of Gromov-Hausdorff distance given in terms of correlations and distortions of correlations, where distortion is defined by
$$\mathrm{dis}(R) = \frac{1}{2}\sup_{(\bar{x},\bar{y}) \in R}\left|d(x_0,x_1) - d(y_0,y_1)\right|,$$
where $R$ is a correlation between some pair of metric spaces $X$ and $Y$. The Gromov-Hausdorff distance between two metric spaces is the infimum of the distortions of correlations between them. This suggests a generalization to languages involving predicate symbols other than just the metric, in which we compute distortion relative to those formulas as well. This approach is highly language dependent and only presents one notion of approximate isomorphism for a given language. A more flexible approach is to allow for an arbitrary collection of formulas in the definition of distortion, yielding this definition.

\begin{defn} \label{defn:main-defn}
A set of formulas $\Delta$ is a \emph{distortion system for $T$} if it is logically complete and closed under renaming variables, quantification, $1$-Lipschitz connectives, logical equivalence modulo $T$, and uniformly convergent limits.

Let $\Delta$ be a distortion system (or any other collection of (finitary) $\Lcal$-formulas) and let $T$ be an $\Lcal$-theory. Let $\frk{M},\frk{N} \models T$ with $\bar{m}\in\frk{M}$ and $\bar{n}\in\frk{N}$.
\begin{itemize}
    \item[(i)] For any relation $R \subseteq \frk{M}\times_\mathcal{S} \frk{N}$, we define the \emph{distortion of $R$ with respect to $\Delta$} as follows:
$$\mathrm{dis}_\Delta(R) =\sup\{|\varphi^\frk{M}(\bar{m})-\varphi^\frk{N}(\bar{n})|:\varphi \in \Delta, (\bar{m},\bar{n})\in R\}$$
    \item[(ii)] We define the \emph{$\Delta$-distance between $(\frk{M},\bar{m})$ and $(\frk{N},\bar{n})$} as follows:
    $$\rho_\Delta(\frk{M},\bar{m};\frk{N},\bar{n})=\inf \{\mathrm{dis}_\Delta(R) :R\in \mathrm{cor}(\frk{M},\bar{m};\frk{N},\bar{n})\}$$
    If $\bar{m}$ and $\bar{n}$ are empty we will just write $\rho_\Delta(\frk{M},\frk{N})$.
   
    \item[(iii)] We say that $(\frk{M},\bar{m})$ and $(\frk{N},\bar{n})$ are \emph{$\Delta$-approximately isomorphic}, written $(\frk{M},\bar{m})\approxx_\Delta (\frk{N},\bar{n})$, if $\rho_\Delta(\frk{M},\bar{m};\frk{N},\bar{n})=0$. 
\end{itemize}
We will also need notions of \emph{almost $\Delta$-similarity}, written $a_\Delta(\frk{M},\bar{m};\frk{N},\bar{n})$, and \emph{almost $\Delta$-approximate isomorphism}, defined analogously to $\rho_\Delta$ and $\Delta$-\hskip0pt approximate isomorphism, respectively, but with almost correlations instead of correlations.
\end{defn}

The closure requirements on distortion systems are natural in the sense that they give a canonical collection of formulas for the corresponding notion of approximate isomorphism. This requirement is harmless, as shown in \cite{Hansona}, in the sense that $\mathrm{dis}_{\Delta}(R) = \mathrm{dis}_{\ol{\Delta}}(R)$ for any collection of formulas $\Delta$ and almost correlation $R$, where $\ol{\Delta}$ is the closure of $\Delta$ under renaming variables, quantification, $1$-Lipschitz connectives, logical equivalence modulo $T$, and uniformly convergent limits. Typically a distortion system is specified in terms of a handful of atomic formulas, as with the Gromov-Hausdorff distance above. The only worry is that given a collection of (atomic) formulas $\Delta$, the closure $\ol{\Delta}$ may fail to be logically complete, as required by the definition of a distortion system. An easy result, given in \cite{Hansona}, is that if $\Delta$ is logically complete for atomic types (or `atomically complete') then $\ol{\Delta}$ is a distortion system.

As it happens this approach is flexible enough to capture other common notions of approximate isomorphism, such as the Lipschitz distance for metric spaces as well as the Banach-Mazur and Kadets distances for Banach spaces and exact isomorphism in any signature. On the other hand, this broader approach also allows for certain pathological behavior, in particular there may be pairs of structures for which $\rho_\Delta(\frk{M},\frk{N}) > a_\Delta(\frk{M},\frk{N})$. All of the motivating examples fall under one of a couple of niceness conditions that prevent this.

\begin{defn} Let $\Delta$ be a distortion system for $T$.
\begin{itemize}
    \item[(i)] We say that $\Delta$ is \emph{regular} if there is an  $\e>0$ such that for any models $\mathfrak{M},\mathfrak{N}\models T$, any almost correlation $R \in \mathrm{acor}(\frk{M},\frk{N})$ with $\mathrm{dis}_\Delta(R) < \e$, and any $\delta> 0$, there exists a correlation $S\in\mathrm{cor}(\frk{M},\frk{N})$ such that $S \supseteq R$ and $\mathrm{dis}_\Delta(S) \leq \mathrm{dis}_\Delta(R) + \delta$.
    \item[(ii)] We say that $\Delta$ is \emph{functional} if there is an $\e > 0$ such that for any models $\mathfrak{M},\mathfrak{N}\models T$ and any closed $R \in \mathrm{acor}(\frk{M},\frk{N})$, if $\mathrm{dis}_\Delta(R) < \e$, then $R$ is the graph of a uniformly continuous bijection between $\frk{M}$ and $\frk{N}$ with uniformly continuous inverse. 
        \item[(iii)] We say that $\Delta$ is \emph{uniformly uniformly continuous} or \emph{u.u.c.}\ if for every $\e>0$ there exists a $\delta>0$ such that for any models $\mathfrak{M},\mathfrak{N}\models T$ and any almost correlation $R \in \mathrm{acor}(\frk{M},\frk{N})$, $\mathrm{dis}_\Delta(R^{< \delta}) \leq \mathrm{dis}_\Delta(R) + \e$, where $R^{< \delta} = \{(a,b) : (\exists(c,d) \in R) d^{\frk{M}}(a,c),d^{\frk{N}}(b,d) < \delta\}$. \qedhere
    %\item[(iii)] We say that $\Delta$ is \emph{uniformly uniformly continuous} or \emph{u.u.c.}\ if for every $\e>0$ there exists a $\delta>0$ such that for any models $\mathfrak{M},\mathfrak{N}\models T$ and any almost correlation $R \in \mathrm{acor}(\frk{M},\frk{N})$, $\mathrm{dis}_\Delta(R^{\leq \delta}) \leq \mathrm{dis}_\Delta(R) + \e$, where $R^{\leq \delta}$ is computed using the max metric on $\frk{M} \times_{\mathcal{S}} \frk{N}$.
\end{itemize}
\end{defn}
%
%A regular distortion system allows (sufficiently good) almost correlations to be approximated arbitrarily well by larger correlations. This is useful because almost correlations can often be built directly using iterative constructions, such as a back-and-forth construction.
Note that functional and u.u.c.\ both imply regular.

By closing under certain operations on formulas we get a precise syntactic characterization of the metric $\delta_\Delta$ induced on type space by a distortion system.

\begin{defn}
Let $\Delta$ be a distortion system for $T$. For each $\lambda$ and any $p,q\in S_\lambda$, let

$$\delta_\Delta^\lambda(p,q) = \inf\{\rho_\Delta(\frk{M},\bar{m};\frk{N},\bar{n}):\bar{m}\models p,\bar{n}\models q\}.$$

We will typically drop the $\lambda$ when it is clear from context.

\end{defn}

\begin{prop} \label{prop:metric-char} Let $\Delta$ be a distortion system for $T$.
\begin{itemize}
    \item[(i)] $\delta^\lambda_\Delta(p,q)=\sup_{\varphi \in \Delta}|\varphi(p)-\varphi(q)|$, where $\varphi(r)$ means the unique value of $\varphi$ entailed by the type $r$.
    \item[(ii)] $\delta^\lambda_\Delta$ is a topometric on $S_\lambda(T)$, i.e.\ it is lower semi-continuous and refines the topology.
    \item[(iii)] (Monotonicity) For any $p,q\in S_{\lambda+\alpha}(T)$, if $p^\prime, q^\prime \in S_\lambda (T)$ are restrictions of $p$ and $q$ to the first $\lambda$ variables, then $\delta_\Delta^\lambda(p^\prime,q^\prime)\leq \delta_\Delta^{\lambda+\alpha}(p,q)$.
    \item[(iv)] For any $p,q \in S_\lambda(T)$ and any permutation $\sigma: \lambda \rightarrow \lambda$, $d_\Delta^n(p,q) = d_\Delta^n(\sigma p, \sigma q)$, where $\sigma r$ is the type $r(x_{\sigma(0)},x_{\sigma(1)},\dots)$.
    \item[(v)] (Extension) For any $p,q \in S_\lambda(T)$ and $p^\prime \in S_{\lambda + \alpha}(T)$ extending $p$ there exists a $q^\prime \in S_{\lambda + \alpha}(T)$ extending $q$ such that $d_\Delta^\lambda(p,q) = d_\Delta^{\lambda + \alpha}(p^\prime, q^\prime)$.
    \item[(vi)] For any infinite $\lambda$, $\delta^\lambda_\Delta(p,q) = \sup \delta^n_\Delta(p^\prime,q^\prime)$, where $p^\prime$ and $q^\prime$ range over restrictions of $p$ and $q$ to finite tuples of variables.
\end{itemize}
\end{prop}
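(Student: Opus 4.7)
The plan is to treat (i) as the central result, derive (iii), (iv), and (vi) as immediate syntactic corollaries, handle (v) by a variant of the same compactness argument, and dispatch (ii) as a soft topological consequence.

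For (i), the inequality $\delta^\lambda_\Delta(p,q) \geq \sup_{\varphi}|\varphi(p)-\varphi(q)|$ is immediate from the definition, since any correlation $R$ containing realizers $(\bar{m},\bar{n})$ forces $|\varphi^{\frk{M}}(\bar{m})-\varphi^{\frk{N}}(\bar{n})| \leq \mathrm{dis}_\Delta(R)$ for every $\varphi\in\Delta$. For the reverse, set $r := \sup_{\varphi\in\Delta}|\varphi(p)-\varphi(q)|$, fix $\e>0$, and take sufficiently saturated $\frk{M},\frk{N}\models T$ realizing $p,q$ as $\bar{m},\bar{n}$. I build a correlation $R \in \mathrm{cor}(\frk{M},\bar{m};\frk{N},\bar{n})$ of distortion at most $r+\e$ by a sort-by-sort back-and-forth, preserving the invariant that $\sup_\psi|\psi^{\frk{M}}(\bar{m}^\ast)-\psi^{\frk{N}}(\bar{n}^\ast)| \leq r+\e$ on the pairs added so far. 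The key step is to show that given such a partial correlation and a new $a \in \frk{M}$, there is a matching $b \in \frk{N}$. Setting $c_\psi := \psi^{\frk{M}}(\bar{m}^\ast,a)$ for each $\psi(\bar{x},y) \in \Delta$, the formula $\varphi_\psi(\bar{x}) := \inf_y |\psi(\bar{x},y) - c_\psi|$ lies in $\Delta$ by closure under quantification and $1$-Lipschitz connectives; since $\varphi_\psi^{\frk{M}}(\bar{m}^\ast)=0$, the hypothesis forces $\varphi_\psi^{\frk{N}}(\bar{n}^\ast) \leq r$, so the partial type $\{|\psi(\bar{n}^\ast,y)-c_\psi|\leq r+\e : \psi \in \Delta\}$ is finitely satisfiable in $\frk{N}$ (a finite max collapses to another such inf-formula in $\Delta$) and is realized by saturation. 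Alternating sides to exhaust both models and letting $\e \to 0$ completes (i).

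Parts (iii), (iv), and (vi) now follow directly from (i): a $\lambda$-ary $\Delta$-formula is also $(\lambda+\alpha)$-ary via dummy variables, giving monotonicity; $\sigma\varphi \in \Delta$ by closure under renaming, giving (iv); and each $\varphi\in\Delta$ is finitary, so the sup over $\Delta$ equals the sup of the same quantities on finite subtuples, giving (vi). For (v), the same inf-trick shows the closed partial type $\{|\psi(\bar{n},\bar{y}_\psi)-\psi^{\frk{M}}(\bar{m}')|\leq r : \psi \in \Delta\}$ over a realizer $\bar{n}\models q$ has the finite intersection property, so by compactness of the type space it is realized by some $\bar{n}'$ extending $\bar{n}$, whose type is the desired $q'$ (monotonicity from (iii) provides the matching lower bound). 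Finally for (ii), lower semi-continuity is immediate from (i), and the fact that $\delta^\lambda_\Delta$ refines the logic topology is a general property of lsc metrics on compact Hausdorff spaces: any topological cluster point $q$ of a $\delta_\Delta$-convergent sequence $p_n \to p$ satisfies $\delta^\lambda_\Delta(p,q) \leq \liminf_n \delta^\lambda_\Delta(p,p_n) = 0$, forcing $q=p$ since logical completeness of $\Delta$ makes $\delta^\lambda_\Delta$ separate types. The main obstacle is the back-and-forth in (i): the partial type realized at each step has cardinality up to $|\Delta|$, so the ambient models must be chosen with enough saturation, and the enumeration must run sort-by-sort and alternate between the two models to ensure the resulting relation is total and surjective on each sort.
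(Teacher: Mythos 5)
The paper does not actually prove this proposition---it is recalled from the earlier paper \cite{Hansona} and the reader is referred there---so there is no in-text argument to compare against; your reconstruction is the standard one and is correct in substance. Three points should be tightened. In the back-and-forth for (i), the inductive hypothesis only gives $\varphi_\psi^{\frk{N}}(\bar{n}^\ast)\leq r+\e$ (the current invariant bound), not $\leq r$; this is harmless because the realization step reproduces exactly the bound it is handed rather than degrading it, so the invariant is preserved (in fact you can carry the bound $r$ throughout and dispense with $\e$ in the step entirely). To obtain a genuine correlation---total and surjective sort-by-sort---you must exhaust both models, so ``enough saturation'' has to mean saturated (or special) models of a common cardinality $\kappa$, with the enumeration of length $\kappa$ and types over parameter sets of size $<\kappa$ realized in each model; since $\delta^\lambda_\Delta$ is an infimum over all realizing pairs, you are free to choose such models, but this should be said, as should the remark that the distortion bound passes from finite subtuples to arbitrary tuples via closure under variable renaming and uniformly convergent limits. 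Finally, the sequence argument in (ii) does not establish that $\delta^\lambda_\Delta$ refines the topology, because $S_\lambda(T)$ is not metrizable in general; the correct argument is that lower semi-continuity makes each ball $B^{\delta_\Delta}_{\leq\e}(p)$ topologically closed, so if some open $U\ni p$ contained no such ball then compactness would produce a point of $\bigcap_{\e>0}B^{\delta_\Delta}_{\leq\e}(p)\setminus U$, i.e.\ a type $q\neq p$ with $\delta^\lambda_\Delta(p,q)=0$, contradicting that logical completeness of $\Delta$ makes $\delta^\lambda_\Delta$ separate types. With these repairs the argument is complete, and parts (iii)--(vi) are handled correctly as syntactic consequences of (i) together with the inf-trick for (v).
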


 Proposition \ref{prop:metric-char} also gives a picture more similar to the definition of perturbation systems, which are presented in terms of a compatible family of topometrics on type spaces. An analogous characterization (essentially the converse of the key points of Proposition \ref{prop:metric-char}) of distortion systems is given in \cite{Hansona}, but it is unneeded here.

Given a correlation $R$ between two metric structures $\frk{M}$ and $\frk{N}$ if we take the metric closure of $R$ we do not increase its distortion, so we can bundle this together as a single metric structure $(\frk{M},\frk{N},R)$, with $R$ encoded by a distance predicate on $\frk{M} \times \frk{N}$ making it into a definable set. Any sufficiently saturated elementary extensions of this structure will give correlations with the same distortion. Anything elementarily equivalent to this structure gives at least an almost correlation. Several other technical considerations necessitate the use of almost correlations in general, but for regular distortion systems it is not necessary to consider almost correlations. All of the motivating examples---the Gromov-Hausdorff and Lipschitz distances for metric spaces and the Banach-Mazur and Kadets distances for Banach spaces---are well behaved in this sense. The collection of structures of the form $(\frk{M},\frk{N},R)$ with $\frk{M},\frk{N}\models T$, $R$ a closed almost correlation, and $\mathrm{dis}_\Delta(R)\leq \e$ turns out to be an elementary class. In \cite{Hansona} we gave a precise characterization of theories arising this way and showed that the distortion system can be recovered from the corresponding family of theories.

\subsection{Parameters in Distortion Systems and $d_\Delta$}

Eventually we will need the correct analog of the $d$-metric for counting types in stability considerations and other things. This concept was introduced by Ben Yaacov in the slightly less general context of perturbations \cite{OnPert}. Our $\delta_\Delta$ is analogous to his $\mathfrak{p}$ and our $d_\Delta$ is analogous to his $\mathfrak{p}^0_{\bar{a}}$.

 Given a complete theory $T$ and a collection of parameters $A$ in some $\frk{M}\models T$, $T_A$ is the theory in the language $\Lcal_A$ with constants added for the parameters $A$. Given a distortion system $\Delta$ for $T$, there is a natural way to extend it to a distortion system $\Delta(A)$ for $T_A$.

\begin{defn}
Let $\Delta$ be a distortion system for a complete theory $T$. Let $A\subseteq \frk{M} \models T$ be some set of parameters. $\Delta(A) = \{\varphi(\bar{x},\bar{a}):\varphi \in \Delta, \bar{a} \in A\}$.
\end{defn}

Clearly $\Delta(A)$ is still a distortion system and it's easy to see that for models $\frk{M}$, $\frk{N}$ containing $A$, we have $\rho_{\Delta(A)}(\frk{M},\frk{N})=\rho_{\Delta}(\frk{M},A;\frk{N},A)$.

\begin{defn}
For any distortion system $\Delta$ and set of fresh constant symbols $C$, let $D(\Delta,C)$ be $\ol{D_0(\Delta,C)}$, where
$D_0(\Delta,C) = \Delta \cup \{d(x,c)\}_{c\in C}.$
\end{defn}

%Note that just as in \cite{OnPert}, $D$ commutes with adding parameters, i.e.\ $D(\Delta(AB),C)=[D(\Delta(A),C)](B)$.

%\begin{prop}
%For any distortion system $\Delta$, parameter sets $A$ and $B$, and set of fresh constant symbols $C$, $D(\Delta(AB),C)=[D(\Delta(A),C)](B)$. I'M NOT SURE THIS IS TRUE.
%\end{prop}
%\begin{proof}
%????
%\end{proof}

\begin{prop}
If $\Delta$ is a distortion system, then for any set of fresh constant symbols $C$, $D(\Delta,C)$ is a distortion system.
\end{prop}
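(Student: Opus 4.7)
The strategy is to reduce to the ``easy result'' of \cite{Hansona} recalled in Section~\ref{sect:dist-sys}: if the generating set $D_0(\Delta, C) = \Delta \cup \{d(x, c)\}_{c \in C}$ is atomically complete for $T$ in the expanded language $\mathcal{L}_C = \mathcal{L} \cup C$, then its closure $D(\Delta, C) = \overline{D_0(\Delta, C)}$ is a distortion system. By construction $D(\Delta, C)$ is already closed under renaming variables, quantification, 1-Lipschitz connectives, logical equivalence modulo $T$, and uniformly convergent limits, so the only work is in atomic completeness.

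Every atomic $\mathcal{L}_C$-formula with free variables $\bar{x}$ has the form $\varphi(\bar{x}, \bar{c})$ for some atomic $\mathcal{L}$-formula $\varphi(\bar{x}, \bar{y})$ and tuple $\bar{c}$ from $C$. Since $\Delta$ is already atomically complete for $\mathcal{L}$, I may assume $\varphi \in \Delta$, and the remaining task is to show $\varphi(\bar{x}, \bar{c}) \in D(\Delta, C)$.

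The basic identity is
\[
\varphi(\bar{x}, \bar{c}) \;=\; \inf_{\bar{y}} \bigl[\varphi(\bar{x}, \bar{y}) + \omega_\varphi\bigl({\textstyle\max_i} d(y_i, c_i)\bigr)\bigr],
\]
where $\omega_\varphi$ is a modulus of uniform continuity for $\varphi$ in $\bar{y}$: the infimum is attained at $\bar{y} = \bar{c}$, and the modulus bound prevents any other $\bar{y}$ from yielding a smaller value. On the right-hand side $\varphi(\bar{x}, \bar{y}) \in \Delta$, each $d(y_i, c_i) \in D_0(\Delta, C)$, $\max_i$ is a 1-Lipschitz connective, and $\inf_{\bar{y}}$ is a quantifier; only the composition with $\omega_\varphi$ and the outer addition lie outside the scope of 1-Lipschitz combination.

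The main technical obstacle is absorbing those non-1-Lipschitz operations into the closure. I would handle it by rewriting the identity as a uniformly convergent limit
\[
\varphi(\bar{x}, \bar{c}) \;=\; \lim_n\, \inf_{\bar{y}} \Bigl[\varphi(\bar{x}, \bar{y}) \uparrow \bigl(n \cdot {\textstyle\max_i} d(y_i, c_i) \downarrow M\bigr)\Bigr],
\]
where $M$ is a bound for $\varphi$ and $n \cdot t$ is realised as the iterated sum of $n$ copies of $t$. The convergence is uniform in $\bar{x}$ at rate $O(\omega_\varphi(M/n))$, and each term in the sequence is built from $D_0(\Delta, C)$ using only 1-Lipschitz connectives (iterated addition, $\max$, clipping) and the $\inf_{\bar{y}}$ quantifier, so closure under uniformly convergent limits delivers $\varphi(\bar{x}, \bar{c}) \in D(\Delta, C)$. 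This is the same approximation manoeuvre used in the proof of the easy result in \cite{Hansona}, so one can equivalently appeal to that result directly once the above reduction has been set up.
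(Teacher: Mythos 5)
Your overall strategy --- reduce to the cited fact that the closure of an atomically complete set of formulas is a distortion system --- is exactly the reduction the paper makes (its entire proof is a citation of that fact). The problem is in how you verify atomic completeness. First, ``atomically complete'' says that the values of the given formulas determine atomic types (equivalently, that the induced distance separates atomic types); it does not say that the atomic formulas themselves belong to the set, and for a distortion system they generally do not: the Gromov--Hausdorff system contains $\tfrac{1}{2}d(x,y)$ rather than $d(x,y)$, and the stratified-language systems contain $2^{-j}d(x,b_j)$ but not $d(x,b_j)$. So the step ``I may assume $\varphi\in\Delta$'' is unjustified, and the goal you set yourself --- literal membership $\varphi(\bar{x},\bar{c})\in D(\Delta,C)$ --- is both stronger than what is needed and false in general.

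Second, and independently, your syntactic identities use connectives that are not $1$-Lipschitz in the sense required here. For $\mathrm{dis}_{\ol{\Delta_0}}(R)=\mathrm{dis}_{\Delta_0}(R)$ to hold --- which is the entire point of the closure conditions and what Proposition \ref{prop:metric-char}(i) rests on --- a connective $u$ may only be admitted when $|u(\bar{s})-u(\bar{t})|\leq\sup_i|s_i-t_i|$, i.e.\ when it is $1$-Lipschitz for the sup metric. Binary addition is $2$-Lipschitz in this sense, $n\cdot t$ is $n$-Lipschitz, and ``iterated addition'' is therefore not a $1$-Lipschitz connective; if such rescalings were admitted, closing up would inflate the distortion of a correlation and the framework would collapse. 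You flag this problem for your first identity, but your second identity has exactly the same defect through the factor $n\cdot\max_i d(y_i,c_i)$. The correct verification is semantic rather than syntactic: because $d(x,c)\in D_0(\Delta,C)$ and correlations are total and surjective, any $R$ with $\mathrm{dis}_{D_0(\Delta,C)}(R)<\e$ correlates $c^{\frk{M}}$ only with points within $\e$ of $c^{\frk{N}}$; then $|\varphi^{\frk{M}}(\bar{m},\bar{c}^{\frk{M}})-\varphi^{\frk{N}}(\bar{n},\bar{c}^{\frk{N}})|$ is controlled by $\mathrm{dis}_{\Delta}(R)$ plus a modulus-of-continuity error, and the logical completeness of $\Delta$ applied to the extended tuples $\bar{m}\bar{c}^{\frk{M}}$ and $\bar{n}\bar{c}^{\frk{N}}$ yields atomic (indeed logical) completeness of $D_0(\Delta,C)$. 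No formula of the form $\varphi(\bar{x},\bar{c})$ ever needs to be produced inside the closure.
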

\begin{proof}
This follows immediately from the fact regarding closures of atomically complete collections of formulas mentioned in Section \ref{sect:dist-sys}. %Proposition \otherpaper \ref{prop:atom-comp}.
\end{proof}

The fact that $D(\Delta,C)$ is a distortion system isn't what is important about it, although it is convenient. What is important is that $\delta^0_{D(\Delta,\bar{c})}$ plays an analogous role to that of the $d$-metric in type spaces. In particular if $\Delta$ is the collection of all formulas and $p,q\in S_n(T)$ are two types, then $\delta^0_{D(\Delta,\bar{c})}(p(\bar{c}),q(\bar{c})) = d(p,q)$ (note that in this expression $p(\bar{c})$ and $q(\bar{c})$ are $0$-types, i.e.\ complete $\Lcal_{\bar{c}}$-theories). To this end we will introduce notation to make the analogy more prominent.

\begin{defn}
If $T$ is a complete theory, $\Delta$ is a distortion system for $T$, and $A$ is some set of parameters in some model of $T$, then for any $\lambda$ and $p,q\in S_\lambda(A)$, we let
$$d_{\Delta,A}(p,q) = \delta^0_{D(\Delta(A),\bar{c})}(p(\bar{c},A),q(\bar{c},A)).$$

We will drop $A$ when it is empty.
\end{defn}

Given how many layers there are to the definition of $d_{\Delta,A}$, the following will be useful for computing estimates of $d_{\Delta,A}$ and is really the best way to think about it. But first recall the following lemma from \cite{Hansona}.

\begin{lem} \label{lem:coarsest}
Let $\Delta$ be a distortion system. For every predicate symbol $P$ and every $\e > 0$ there is a $\delta > 0$ such that if $\rho_\Delta(\frk{M},\bar{m};\frk{N},\bar{n}) < \delta$ then $|P^\frk{M}(\bar{m}) - P^\frk{N}(\bar{n})| < \e$.
\end{lem}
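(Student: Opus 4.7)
The plan is to argue by contradiction using compactness of the finitary type space $S_k(T)$, where $k$ is the arity of $P$ and $T$ is the complete theory for which $\Delta$ is a distortion system. Suppose the conclusion fails for some $\varepsilon>0$: then for each $i$ one can pick $\frk{M}_i,\frk{N}_i\models T$ and $k$-tuples $\bar{m}_i\in\frk{M}_i$, $\bar{n}_i\in\frk{N}_i$ with $\rho_\Delta(\frk{M}_i,\bar{m}_i;\frk{N}_i,\bar{n}_i)<1/i$ while $|P^{\frk{M}_i}(\bar{m}_i)-P^{\frk{N}_i}(\bar{n}_i)|\geq\varepsilon$.

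Directly from the definition of $\delta_\Delta^k$ as an infimum of $\rho_\Delta$-values, we get $\delta_\Delta^k(\tp(\bar{m}_i),\tp(\bar{n}_i))\leq \rho_\Delta(\frk{M}_i,\bar{m}_i;\frk{N}_i,\bar{n}_i)\to 0$. By compactness of $S_k(T)$ in the logic topology, after passing to a subsequence we may assume $\tp(\bar{m}_i)\to p$ and $\tp(\bar{n}_i)\to q$ for some $p,q\in S_k(T)$. Lower semi-continuity of $\delta_\Delta^k$ (Proposition \ref{prop:metric-char}(ii)) then forces $\delta_\Delta^k(p,q)\leq \liminf_i \delta_\Delta^k(\tp(\bar{m}_i),\tp(\bar{n}_i))=0$.

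Finally, I invoke the logical completeness clause in the definition of a distortion system, which I read as the statement that $\delta_\Delta^k$ separates points of $S_k(T)$; so $\delta_\Delta^k(p,q)=0$ implies $p=q$. Since $P(\bar{x})$ induces a continuous function on $S_k(T)$, both $P^{\frk{M}_i}(\bar{m}_i)=P(\tp(\bar{m}_i))$ and $P^{\frk{N}_i}(\bar{n}_i)=P(\tp(\bar{n}_i))$ converge to the common value $P(p)=P(q)$, contradicting $|P^{\frk{M}_i}(\bar{m}_i)-P^{\frk{N}_i}(\bar{n}_i)|\geq\varepsilon$.

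The only genuinely nontrivial step is the last one: the closure conditions alone would only give that $\delta_\Delta$ is a topo-pseudometric on $S_k(T)$, and it is precisely logical completeness that upgrades it to a topometric. This is what ensures the compactness argument produces equal limit types rather than merely $\delta_\Delta$-close ones, and hence is what allows continuity of $P$ on the logic type space (which is automatic) to yield the desired uniform control.
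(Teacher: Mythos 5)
Your argument is correct, and it is essentially the expected one: the paper does not reprove this lemma (it is imported from \cite{Hansona}), but your compactness-plus-lower-semicontinuity argument is precisely an unwinding, for the single formula $P$, of the Fact quoted shortly after the lemma that continuous functions on compact topometric spaces are uniformly continuous with respect to the metric, applied to $P$ as a continuous function on $(S_k(T),\delta^k_\Delta)$. The only quibble is that for an uncountable language $S_k(T)$ is compact but need not be sequentially compact, so you should pass to a convergent subnet rather than a subsequence; the lower-semicontinuity and continuity-of-$P$ steps go through verbatim for nets. Your reading of logical completeness as point-separation of $\delta^k_\Delta$ is the intended one and is also consistent with Proposition \ref{prop:metric-char}(ii), which asserts that $\delta^k_\Delta$ is a genuine topometric rather than a topo-pseudometric.
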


\begin{prop} \label{prop:dd-estimate} Let $T$ be a complete theory, let $\Delta$ be a distortion system for $T$.
\begin{itemize}
    \item[(i)] For every $\e>0$ there is a $\delta>0$ such that if there are models $\frk{M},\frk{N} \models T$ both containing some set of parameters $A$, tuples $\bar{m} \in \frk{M}$ and $\bar{n},\bar{b} \in \frk{N}$ such that $\bar{m}\models p$ and $\bar{n}\models q$, and an $R\in \mathrm{cor}(\frk{M},A\bar{m};\frk{N},A\bar{b})$, and $ \mathrm{dis}_\Delta(R), d^\frk{N}(\bar{n},\bar{b}) \leq \delta$, then $d_{\Delta,A}(p,q) \leq \e$.
    \item[(ii)] For any set of parameters $A$, if $d_{\Delta,A}(p,q) \leq \e$, then there exists models $\frk{M}, \frk{N}\models T$ containing $A$, tuples $\bar{m}\in\frk{M}$ and $\bar{n},\bar{b}\in\frk{N}$ such that $\frk{M}\models p(\bar{m})$ and $\frk{N}\models q(\bar{n})$, and an $R\in\mathrm{cor}(\frk{M},A\bar{m};\frk{N},A\bar{b})$ such that $\mathrm{dis}_\Delta(R) \leq \e$ and $d^\frk{N}(\bar{n},\bar{b})\leq \e$.
\end{itemize}
\end{prop}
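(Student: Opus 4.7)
The plan is to unfold $d_{\Delta,A}(p,q)$ as $\delta^0_{D(\Delta(A),\bar{c})}(p(\bar{c},A),q(\bar{c},A))$, a distance between two $0$-types in the expanded language $\Lcal_{A\bar{c}}$, and then translate between correlations in $\Lcal$ that constrain $A\bar{m}$ and correlations at the level of $\Lcal_{A\bar{c}}$-reducts. The key observation throughout is that $D(\Delta(A),\bar{c})$ is the distortion-system closure of the two families $\Delta(A)$ and $\{d(x,c_i)\}_i$, so by invariance of distortion under the closure operations (Section \ref{sect:dist-sys}) it suffices to bound distortion on these generators.

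For Part (i), I take $\delta = \e/2$. The hypothesized $R$ is unchanged when its endpoints are viewed as $\Lcal_{A\bar{c}}$-structures $(\frk{M},A,\bar{m})$ and $(\frk{N},A,\bar{n})$. Since $R$ contains the diagonal on $A$, the distortion on $\Delta(A)$-formulas reduces to $\mathrm{dis}_\Delta(R) \le \delta$. For the new generator $d(x,c_i)$, since $(m_i,b_i)\in R$ and the metric lies in $\Delta$, the triangle inequality
\[
|d^\frk{M}(x,m_i) - d^\frk{N}(y,n_i)| \le |d^\frk{M}(x,m_i) - d^\frk{N}(y,b_i)| + d^\frk{N}(b_i,n_i) \le \delta + \delta
\]
gives a $2\delta$ bound. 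Hence $\mathrm{dis}_{D(\Delta(A),\bar{c})}(R) \le 2\delta = \e$, which is an upper bound on $\rho_{D(\Delta(A),\bar{c})}((\frk{M},A,\bar{m}),(\frk{N},A,\bar{n}))$, and therefore on $\delta^0_{D(\Delta(A),\bar{c})}(p(\bar{c},A),q(\bar{c},A)) = d_{\Delta,A}(p,q)$.

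For Part (ii), I reverse direction. The hypothesis unpacks to $\delta^0_{D(\Delta(A),\bar{c})}(p(\bar{c},A),q(\bar{c},A)) \le \e$. The elementary-class viewpoint from \cite{Hansona}---structures of the form $(\frk{M}',\frk{N}',R')$ with $\mathrm{dis}_{D(\Delta(A),\bar{c})}(R') \le \e$ form an elementary class---together with a compactness argument (or passage to a sufficiently saturated extension) lets me realize the infimum: I produce $\Lcal_{A\bar{c}}$-structures $(\frk{M},A,\bar{m}) \models p(\bar{c},A)$ and $(\frk{N},A,\bar{n}) \models q(\bar{c},A)$ together with a correlation $R'$ satisfying $\mathrm{dis}_{D(\Delta(A),\bar{c})}(R') \le \e$. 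For each $i$ I pick $b_i \in \frk{N}$ with $(m_i,b_i)\in R'$; applying the generator $d(x,c_i)$ forces $d^\frk{N}(b_i,n_i)\le \e$. Setting $R = R' \cup \{(a,a) : a\in A\}$ yields a correlation in $\mathrm{cor}(\frk{M},A\bar{m};\frk{N},A\bar{b})$, and for each $\varphi\in\Delta$ substituting the new diagonal entries as $A$-constants reduces the distortion check to a bound on a formula of $\Delta(A)$ along an $R'$-correlated tuple, giving $\mathrm{dis}_\Delta(R)\le\e$.

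The main obstacle is the infimum realization in Part (ii): a priori we only have a sequence of correlations with distortion decreasing to $\e$, and extracting a single triple that achieves the bound exactly requires the elementary-class packaging from \cite{Hansona} combined with compactness. Once that is in hand, both parts amount to triangle-inequality bookkeeping together with the standard invariance of distortion under distortion-system closure.
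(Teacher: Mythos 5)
Your overall architecture matches the paper's---unfold $d_{\Delta,A}$ as $\delta^0_{D(\Delta(A),\bar{c})}$ and move between correlations in $\Lcal$ and correlations of the $\Lcal_{A\bar{c}}$-expansions---and your Part (ii) is essentially the paper's argument (realize the infimum by compactness/saturation, extract $\bar{b}$ via a single application of a $d(x,c_i)$ generator, and then append the diagonal on $A$; you are in fact a touch more careful than the paper about this last diagonal step).

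However, there is a genuine gap in Part (i). You write that ``the metric lies in $\Delta$'' and conclude $|d^\frk{M}(x,m_i)-d^\frk{N}(y,b_i)|\le\delta$ directly from $\mathrm{dis}_\Delta(R)\le\delta$. That inference is unjustified. A distortion system is only required to be logically complete and closed under renaming, quantification, $1$-Lipschitz connectives, logical equivalence, and uniform limits---none of which forces the metric (or any fixed positive multiple of it) to be a member of $\Delta$. The closure under $1$-Lipschitz connectives does not retroactively put atomic formulas in $\Delta$. What is true, by Lemma~\ref{lem:coarsest}, is only a uniform-continuity statement: for each target $\e$ there \emph{exists} a $\delta>0$ such that $\mathrm{dis}_\Delta(R)<\delta$ forces the metric perturbation to be $<\frac{1}{2}\e$. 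So the correct move is to invoke that lemma to obtain a $\delta$ (depending on $\e$ in a possibly nonlinear, non-explicit way, and which one also takes $<\frac{1}{2}\e$), giving $|d^\frk{M}(m_i,u)-d^\frk{N}(b_i,v)|\le\frac{1}{2}\e$ for $(u,v)\in R$, and then combine with $d^\frk{N}(\bar n,\bar b)\le\delta$ to get a bound $<\e$ on the $d(x,c_i)$ generators. Your choice $\delta=\e/2$ and the inequality $|d^\frk{M}(x,m_i)-d^\frk{N}(y,n_i)|\le\delta+\delta$ both rest on the missing Lipschitz control; once you route the first summand through Lemma~\ref{lem:coarsest} instead, the argument closes.
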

\begin{proof} 

\emph{(i):}  Fix $\e > 0$, By Lemma \otherpaper $\ref{lem:coarsest}$, there is a $\delta > 0$ such that if $\delta_\Delta(\mathrm{tp}(ab),
\allowbreak \mathrm{tp}(ce)) \leq \delta$, then $|d(a,b)-d(c,e)|\leq \frac{1}{2}\e$. Without loss assume that $\delta < \frac{1}{2}\e$. Assume that there are models $\frk{M},\frk{N}\models T_A$ with tuples $\bar{m}\in \frk{M}$ and $\bar{n},\bar{b} \in \frk{N}$ such that $\bar{m}\models p$ and $\bar{n}\models q$, and an $R \in \mathrm{cor}(\frk{M},A\bar{m};\frk{N},A\bar{b})$, such that $\mathrm{dis}_\Delta(R) \leq \delta$ and $d^\frk{N}(\bar{n},\bar{b}) \leq \delta$.

We need to compute $\mathrm{dis}_{D(\Delta(A),\bar{c})}(R) = \mathrm{dis}_{D_0(\Delta(A),\bar{c})}(R)$. %(by Proposition \otherpaper \ref{prop:enlarge}).
Clearly we already have that $\mathrm{dis}_{D_0(\Delta(A),\bar{c})}(R) \geq \mathrm{dis}_{\Delta(A)}(R)$. We just need to compute $\sup_{c\in \bar{c},(u,v)\in R}|d^{\frk{M}}(u,\allowbreak c)-d^{\frk{N}}(v,c)|$. For any $i<|\bar{m}|$, we have that $(m_i,b_i)\in R$. So for any $(u,v)\in R$, we have that $|d^\frk{M}(m_i,u)-d^\frk{N}(b_i,v)| \leq \frac{1}{2}\e$, so we also have $|d^\frk{M}(m_i,u)-d^\frk{N}(n_i,v)| \leq \frac{1}{2}\e + \delta < \e$. Therefore all together we have that $\mathrm{dis}_{D(\Delta(A),\bar{c})}(R)=\mathrm{dis}_{D_0(\Delta(A),\bar{c})}(R)\leq \e$, as required.

\emph{(ii):} We have that since $d_\Delta(p,q)=\delta^0_{D(\Delta(A),\bar{c})}\leq \e$, we can construct models $(\frk{M},\bar{m})\models p(\bar{m})$ and $(\frk{N},\bar{n})\models q(\bar{n})$ and an $R\in\mathrm{cor}(\frk{M},\bar{m};\frk{N},\bar{n})$ such that $\mathrm{dis}_{D(\Delta(A),\bar{c})}(R) \leq \e$. Find $\bar{b} \in \frk{N}$ such that $(\bar{m},\bar{b})\in R$. Now we have that $|d^\frk{M}(\bar{m},\bar{m})-d^\frk{N}(\bar{b},\bar{n})| \leq \e$, so in particular $d^\frk{N}(\bar{n},\bar{b})\leq \e$, as required.
\end{proof}

In particular $d_{\Delta,A}$ is always uniformly dominated by $d$ on $S_n(A)$ and $\delta_\Delta$ restricted to $S_n(A)$. Moreover as witnessed by the identity correlation on a sufficiently saturated model of $T_A$, $d_{\Delta,A}\leq d$ always holds.

\begin{cor} \label{cor:dd-unif-eq}
If $T$ is a complete theory and $\Delta$ is a u.u.c.\ distortion system for $T$, then for every $\e > 0$ there is a $\delta > 0$ such that for any $\lambda$ and any types $p,q\in S_\lambda(T)$, if $d_\Delta(p,q) < \delta$ then $\delta_\Delta(p,q) < \e$, i.e.\ $\delta_\Delta$ and $d_\Delta$ are uniformly equivalent.
\end{cor}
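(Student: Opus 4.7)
The plan is to establish the non-trivial direction: small $d_\Delta$ forces small $\delta_\Delta$. The preceding paragraph already records that $d_\Delta$ is uniformly dominated by $\delta_\Delta$, so combining the two yields the claimed uniform equivalence. I would first reduce to finite $\lambda$ using Proposition \ref{prop:metric-char}(vi), which writes $\delta^\lambda_\Delta$ as a supremum over finitary restrictions; and observe that $d_\Delta^n(p',q') \leq d_\Delta^\lambda(p,q)$ for any such restriction, since $D(\Delta,\bar{c}_n)$ sits inside $D(\Delta,\bar{c}_\lambda)$ as a set of formulas and Proposition \ref{prop:metric-char}(i) expresses both $\delta^0$-values as suprema of these formula differences.

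For the finite case, fix $\e > 0$. Apply u.u.c.\ to obtain $\delta_0 > 0$ such that $\mathrm{dis}_\Delta(R^{<\delta_0}) \leq \mathrm{dis}_\Delta(R) + \e/2$ for every almost correlation $R$, and set $\delta = \tfrac{1}{2}\min(\delta_0, \e)$. Suppose $d_\Delta^n(p,q) < \delta$. Proposition \ref{prop:dd-estimate}(ii) then supplies $\frk{M},\frk{N} \models T$, tuples $\bar{m} \models p$, $\bar{n} \models q$, $\bar{b} \in \frk{N}$, and $R \in \mathrm{cor}(\frk{M},\bar{m};\frk{N},\bar{b})$ with $\mathrm{dis}_\Delta(R) \leq \delta$ and $d^\frk{N}(\bar{n},\bar{b}) \leq \delta < \delta_0$. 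For each coordinate $i$, the pair $(m_i, b_i) \in R$ witnesses $(m_i, n_i) \in R^{<\delta_0}$, so $(\bar{m},\bar{n})$ lies coordinatewise in $R^{<\delta_0}$. Hence for every $\varphi \in \Delta$ of matching arity,
$$|\varphi^\frk{M}(\bar{m}) - \varphi^\frk{N}(\bar{n})| \leq \mathrm{dis}_\Delta(R^{<\delta_0}) \leq \delta + \e/2 \leq \e,$$
and taking the supremum while invoking Proposition \ref{prop:metric-char}(i) gives $\delta_\Delta^n(p,q) \leq \e$.

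The only subtle point is that the correlation produced by Proposition \ref{prop:dd-estimate}(ii) pairs $\bar{m}$ with $\bar{b}$ rather than with $\bar{n}$ itself, and this is precisely what u.u.c.\ repairs: the small $d$-slack between $\bar{n}$ and $\bar{b}$ is absorbed by a uniformly small increase in distortion upon passing to $R^{<\delta_0}$. Merely requiring that $\Delta$ be regular would not suffice, since regularity only upgrades almost correlations to full correlations when the initial distortion lies below a fixed threshold and does not provide the quantitative thickening control needed here.
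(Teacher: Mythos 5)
Your argument is correct and is essentially the intended one: the paper states this as an unproved corollary of Proposition \ref{prop:dd-estimate} together with the u.u.c.\ condition, and your route---extract a correlation pairing $\bar{m}$ with $\bar{b}$ near $\bar{n}$ via Proposition \ref{prop:dd-estimate}(ii), then thicken to $R^{<\delta_0}$ so that $(\bar{m},\bar{n})$ lands in it and apply Proposition \ref{prop:metric-char}(i)---is exactly that argument, with the reduction to finite tuples via Proposition \ref{prop:metric-char}(vi) a harmless (and in fact optional) extra step. The only cosmetic point is that you conclude $\delta_\Delta(p,q)\leq\e$ rather than $<\e$, which is fixed by running the argument with $\e/2$.
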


In fact it's not hard to see that $\delta_\Delta$ and $d_\Delta$ being uniformly equivalent like this (in a way that is uniform across all parameter free type spaces) characterizes u.u.c.\ distortion systems. Furthermore this means that with u.u.c.\ $\Delta$ we don't need to be careful about the distinction between $(S_n(A),d_{\Delta})$ and $(S_n(A),d_{\Delta, A})$, as these two metrics are always uniformly equivalent.

%In Section \ref{subsec:mor-thm} a strengthening variant of Proposition \ref{prop:dd-estimate} will be useful.

%\begin{prop} \label{prop:dd-estimate-strong}
%Let $T$ be a complete theory, $\Delta$ a distortion system for $T$, and let $A$ be some set of parameters in some model of $T$. 

%For every $\e>0$ there is a $\delta >0$ such that for any models $\frk{M},\frk{N}\models T$ with $A\subseteq \frk{M}$. ????????????????
%\end{prop}
%\begin{proof}
%NOT DONE
%\end{proof}

\section{Approximately Atomic Types}

The following concepts, developed in general by Ben Yaacov in \cite{BenYaacov2008}, will be central in this paper, although we have chosen to use the term `$d$-atomic' rather than `$d$\nobreakdash-isolated' to avoid confusion with `isolated with respect to $d$' and to emphasize that these notions will play roles analogous to atomic types.

\begin{defn}
Let $X$ be a topological space and $d:X^2 \rightarrow \mathbb{R}$ a metric (not necessarily related to the topology). Let $x$ be a point in $X$.
\begin{itemize}
    \item $x$ is \emph{$d$-atomic} if $x \in \mathrm{int}B_{\leq \e}(x)$ for every $\e>0$.
    \item $x$ is \emph{weakly $d$-atomic} if $\mathrm{int}B_{\leq \e}(x) \neq \varnothing$ for every $\e>0$.
\end{itemize}
If $Y$ is a subspace containing $x$, and $x$ is (weakly) $d$-atomic in $Y$, we will say that $x$ is \emph{relatively (weakly) $d$-atomic in $Y$} or just \emph{(weakly) $d$-atomic-in-$Y$} (with interiors computed in the subspace topology on $Y$).
\end{defn}

The concept of weak $d$-atomicity will only be important in Subsection \ref{subsec:w-cat} in the context of separable approximate categoricity.

\begin{lem}
If $(X,d)$ is a compact topometric space, then for any closed set $F \subseteq X$ $x \in F$ is $d$-atomic-in-$F$ if and only if there is a continuous function $f:X\rightarrow \mathbb{R}$ such that $f(x)=0$ and for all $y\in F$, $d(x,y) \downarrow 1 \leq f(y)$.
\end{lem}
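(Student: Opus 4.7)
My plan is to prove the two directions separately.

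For the sufficiency direction, I would first observe that if such an $f$ exists, then for any $\e > 0$ the set $U \coloneqq \cset{f < \e \downarrow 1}$ is open in $X$, contains $x$ (since $f(x) = 0$), and satisfies $U \cap F \subseteq B_{\leq \e}(x)$: any $y \in U \cap F$ obeys $d(x,y) \downarrow 1 \leq f(y) < \e \downarrow 1 \leq 1$, which forces $d(x,y) < \e$. Hence $U \cap F$ witnesses that $x$ lies in the $F$-interior of $B_{\leq \e}(x) \cap F$, so $x$ is $d$-atomic-in-$F$.

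For necessity, I would start by noting that $X$ is compact Hausdorff (lower semi-continuity of $d$ together with $d(z,z)=0$ forces any two distinct points to be separated by disjoint open sets), hence normal. Using $d$-atomicity-in-$F$, I would choose, for each $n \geq 1$, an open $V_n \subseteq X$ with $x \in V_n$ and $V_n \cap F \subseteq B_{\leq 1/n}(x)$, arranged to be decreasing by passing to finite intersections. Urysohn's lemma, applied to the disjoint closed sets $\{x\}$ and $X \setminus V_n$, then supplies continuous $g_n \colon X \to [0,1]$ with $g_n(x) = 0$ and $g_n \equiv 1$ on $X \setminus V_n$. I would then set
\[
f(y) \coloneqq \sum_{n=2}^\infty \frac{g_n(y)}{n(n-1)},
\]
which converges uniformly since $\sum 1/(n(n-1)) = 1$, making $f$ continuous with $f(x) = 0$.

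To verify $f(y) \geq d(x,y) \downarrow 1$ for $y \in F$, I would fix $y$ and set $M \coloneqq \sup\{n \geq 1 : y \in V_n\}$, with the convention $M = 0$ if $y \notin V_1$. If $M = \infty$ then $y \in \bigcap_n V_n$ gives $d(x,y) = 0$, so the bound is trivial. Otherwise $g_n(y) = 1$ for every $n > M$, so
\[
f(y) \;\geq\; \sum_{n > \max(M,1)} \frac{1}{n(n-1)} \;=\; \frac{1}{\max(M,1)};
\]
this yields $1 \geq d(x,y) \downarrow 1$ when $M = 0$, and $1/M \geq d(x,y) \downarrow 1$ when $M \geq 1$ (using $y \in V_M \cap F \subseteq B_{\leq 1/M}(x)$). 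The main obstacle is calibrating the coefficients: the choice $1/(n(n-1))$ makes the tail-sum $\sum_{n > M} c_n$ equal $1/M$ exactly, matching the ball radii, whereas faster-decaying geometric choices like $2^{-n}$ would be too small. Every other step reduces to Urysohn's lemma and standard uniform-convergence bookkeeping.
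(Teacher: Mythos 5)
Your proof is correct and follows essentially the same strategy as the paper's: sum a sequence of Urysohn bump functions that vanish at $x$ and take a calibrated constant value outside a shrinking neighborhood basis witnessing $d$-atomicity-in-$F$. The only differences are cosmetic---you apply Urysohn directly in $X$ to the closed sets $\{x\}$ and $X\setminus V_n$ (avoiding the paper's Tietze extension step), your telescoping coefficients $1/(n(n-1))$ are matched exactly to the radii $1/n$, and you also spell out the easy converse direction that the paper leaves implicit.
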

\begin{proof}
Topometric spaces are always Hausdorff so it is sufficient to construct a continuous function on just $F$ and then we can extend it by the Tietze extension theorem.

Let $G_0 = B_{\leq 1/2}^{d}(x)$ (as a subset of $F$). For every $i<\omega$, let $U_i = \mathrm{int}_F G_i$ (once $G_i$ is defined). Given $G_i$, and therefore also $U_i$, find an $\e_i > 0$ small enough that $B_{\leq \e_i}^d(x) \subseteq U_i$, which always exists by compactness, and such that $\e_i < 2^{-i}$.

For each $i<\omega$, let $f_i:F\rightarrow [0,2^{-i-1}]$ be a continuous function such that $G_{i+1} \subseteq f_i^{-1}(0)$ and $F \setminus U_i \subseteq f_i^{-1}(2^{-i-1})$, which is possible by Urysohn's lemma. Now let $f = \sum_{i<\omega}f_i$. $f$ has the required properties.
\end{proof}

The material after this point will only be important in Subsection \ref{subsec:mor-thm} in the context of inseparable approximate categoricity.

\begin{cor}
Fix a complete first-order theory $T$, distortion system $\Delta$ for $T$, parameter set $A$, and a type $p\in S_n(A)$.
\begin{itemize}
    \item[(i)] $p$ is $\delta_{\Delta(A)}$-atomic if and only if there is an $A$-formula $\varphi$ such that $\varphi(p)=0$ and for any formula $\psi(\bar{x},\bar{y}) \in \Delta$ and $\bar{a}\in A$, $|\psi(q,\bar{a})-\psi(p,\bar{a})|\downarrow 1 \leq \varphi(q)$ for all $q\in S_n(A)$.
    \item[(ii)] $p$ is $d_{\Delta,A}$-atomic if and only if there is an $A$-formula $\varphi$ such that $\varphi(p)=0$ and for any formula $\psi(\bar{c},\bar{y})\in D(\Delta(A),\bar{c})$ and $\bar{a} \in A$, $|\psi(q,\bar{a})-\psi(p,\bar{a})| \downarrow 1\leq \varphi(q)$ for all $q\in S_n(A)$.
\end{itemize}
\end{cor}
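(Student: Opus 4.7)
The plan is to apply the preceding lemma to two compact topometric spaces: $(S_n(A), \delta_{\Delta(A)})$ for part (i), and $(S_n(A), d_{\Delta,A})$ for part (ii). In each case I take $X = F$ equal to the whole type space; compactness is automatic and the topometric structure is provided by Proposition \ref{prop:metric-char}(ii) (applied to the appropriate distortion system, with $d_{\Delta,A}$ transferred from the relevant $0$-type space as discussed below). The continuous function $f$ produced by the lemma is identified with the value of an $A$-formula $\varphi$ via the standard correspondence between continuous $\mathbb{R}$-valued functions on a type space in continuous logic and $A$-formulas (here using closure of formulas under uniformly convergent limits, noted in the introduction). Finally, the lemma's single condition $d(p,q) \downarrow 1 \leq \varphi(q)$ is translated into the stated family of conditions using Proposition \ref{prop:metric-char}(i) together with the elementary identity $(\sup_i a_i) \downarrow 1 = \sup_i (a_i \downarrow 1)$, so that a bound $\sup_\chi|\chi(p)-\chi(q)|\downarrow 1\le\varphi(q)$ is equivalent to the family of bounds $|\chi(p)-\chi(q)|\downarrow 1\le\varphi(q)$ ranging over $\chi$.

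For (i), Proposition \ref{prop:metric-char}(i) gives $\delta_{\Delta(A)}(p,q) = \sup_{\chi \in \Delta(A)} |\chi(p) - \chi(q)|$, and every $\chi \in \Delta(A)$ is of the form $\psi(\bar{x},\bar{a})$ with $\psi(\bar{x},\bar{y}) \in \Delta$ and $\bar{a} \in A$. The $\sup/{\downarrow}$ identity then shows that $\delta_{\Delta(A)}(p,q) \downarrow 1 \leq \varphi(q)$ is equivalent to the family $|\psi(q,\bar{a}) - \psi(p,\bar{a})| \downarrow 1 \leq \varphi(q)$ over $\psi\in\Delta$ and $\bar{a}\in A$, giving the claim.

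Part (ii) follows the same template applied to $d_{\Delta,A}$. By definition $d_{\Delta,A}$ on $S_n(A)$ is the pullback of $\delta^0_{D(\Delta(A),\bar{c})}$ along the canonical homeomorphism $p \mapsto p(\bar{c},A)$ between $S_n(A)$ and the space of $0$-types of $T_{A \cup \bar{c}}$; under this homeomorphism an $A$-formula $\varphi(\bar{x})$ corresponds to the sentence $\varphi(\bar{c})$ in $\Lcal_{A\cup\bar{c}}$, and topometric structure, continuous functions, and the $A$-formula $\varphi$ supplied by the lemma all transfer in the obvious way. Proposition \ref{prop:metric-char}(i) now expresses $\delta^0_{D(\Delta(A),\bar{c})}$ as a sup over sentences in $D(\Delta(A),\bar{c})$; each such sentence is of the form $\psi(\bar{c},\bar{a})$ with $\psi(\bar{c},\bar{y})\in D(\Delta(A),\bar{c})$ and $\bar{a}\in A$ supplying values for the free variables, matching the quantification in the statement. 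The same $\sup/{\downarrow}$ manipulation as in (i) then gives the claimed characterization.

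The main obstacle, such as it is, is purely the notational bookkeeping in (ii): keeping straight the identification between $p \in S_n(A)$ and the $0$-type $p(\bar{c}, A)$, between $A$-formulas $\varphi(\bar{x})$ and $\Lcal_{A\cup\bar{c}}$-sentences $\varphi(\bar{c})$, and between sentences in $D(\Delta(A),\bar{c})$ and the parametrized formulas $\psi(\bar{c},\bar{y})\in D(\Delta(A),\bar{c})$ with $\bar{a}\in A$ appearing in the statement. Beyond this, the mathematical content is entirely contained in the preceding lemma and Proposition \ref{prop:metric-char}(i).
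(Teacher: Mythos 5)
Your argument is correct, and it is clearly the intended derivation: apply the preceding lemma with $F=X=S_n(A)$ (the topometric hypothesis supplied by Proposition~\ref{prop:metric-char}(ii), or the analogous fact for $d_{\Delta,A}$), identify the resulting continuous $f$ with an $A$-formula, and unwind $d(p,q)\downarrow 1\leq\varphi(q)$ into the stated family of inequalities using Proposition~\ref{prop:metric-char}(i) and the monotone-continuity identity $(\sup_i a_i)\downarrow 1=\sup_i(a_i\downarrow 1)$. The paper states this corollary without proof, so there is no alternative route to compare against.

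One small point worth making explicit in (ii): the equivalence between ``for all sentences $\chi\in D(\Delta(A),\bar c)$, $|\chi(q)-\chi(p)|\downarrow 1\le\varphi(q)$'' (which is what the lemma plus Proposition~\ref{prop:metric-char}(i) gives you) and the corollary's quantification over $\psi(\bar c,\bar y)\in D(\Delta(A),\bar c)$ and $\bar a\in A$ runs in both directions. The direction you mention (every sentence is of the stated form with $\bar y,\bar a$ empty) gives one inclusion; for the other you need that $\psi(\bar c,\bar a)$ is, up to logical equivalence, again a sentence of $D(\Delta(A),\bar c)$ whenever $\psi(\bar c,\bar y)\in D(\Delta(A),\bar c)$ and $\bar a\in A$. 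This holds because $\Delta(A)$ is closed under substituting $A$-constants by construction, the generator $d(x,c)$ becomes $d(a,c)=\sup_y|d(y,c)-d(y,a)|$ which lies in the closure, and each closure operation in the definition of $\ol{\,\cdot\,}$ preserves closure under such substitution. In part (i) the analogous fact is immediate from the definition $\Delta(A)=\{\varphi(\bar x,\bar a):\varphi\in\Delta,\bar a\in A\}$, which is why you could treat it more casually there; in (ii) it deserves a sentence.
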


We will make use of the following fact.

\begin{fact}[\cite{BenYaacov2008}]
If $(X,d)$ is a topometric space then any continuous function $f:X\rightarrow \mathbb{R}$ is uniformly continuous with regards to $d$.
\end{fact}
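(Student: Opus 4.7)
The plan is to prove this by a standard compactness-plus-lower-semi-continuity contradiction argument, working under the (implicit) assumption that $(X,d)$ is compact in its topology $\tau$---which is the only setting we care about in this paper, namely the type spaces $S_\lambda(T)$ equipped with $\delta_\Delta$ or $d_{\Delta,A}$. Suppose some $\tau$-continuous $f:X\to\mathbb{R}$ fails to be $d$-uniformly continuous. Then I would extract a witnessing pair of nets: an $\varepsilon > 0$ and nets $(x_\alpha),(y_\alpha)$ in $X$ with $d(x_\alpha,y_\alpha)\to 0$ but $|f(x_\alpha)-f(y_\alpha)|\geq \varepsilon$ for every $\alpha$.

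Next I would invoke compactness of $(X,\tau)$ to pass to a subnet along which $x_\alpha\to x$ and $y_\alpha\to y$ in $\tau$. Topological continuity of $f$ then forces $f(x_\alpha)\to f(x)$ and $f(y_\alpha)\to f(y)$, so $|f(x)-f(y)|\geq \varepsilon$. On the other hand, lower semi-continuity of $d$ with respect to $\tau$ gives $d(x,y)\leq \liminf_\alpha d(x_\alpha,y_\alpha)=0$, whence $x=y$ and $f(x)=f(y)$, contradicting the previous inequality.

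The main obstacle is really only notational: the topology on a type space need not be metrizable (for example in uncountable languages or with uncountable parameter sets), so the argument must be formulated with nets rather than sequences. Once that is set up, the statement is just the familiar observation that on a compact Hausdorff space any topology-refining, lower semi-continuous metric forces every $\tau$-continuous real-valued function to be uniformly $d$-continuous, via the usual subnet-convergence trick.
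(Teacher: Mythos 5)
The paper does not prove this statement; it is imported verbatim from \cite{BenYaacov2008} as a citation, so there is no in-paper argument to compare against. Your proof is essentially the standard one. The net-based contradiction argument is correct: from failure of $d$-uniform continuity you extract a net (or even a sequence indexed by $\delta = 1/n$) with $d(x_\alpha,y_\alpha)\to 0$ and $|f(x_\alpha)-f(y_\alpha)|\geq \varepsilon$, pass to a $\tau$-convergent subnet using compactness, use $\tau$-continuity of $f$ to get $|f(x)-f(y)|\geq\varepsilon$, and use lower semi-continuity of $d$ on $X\times X$ to get $d(x,y)\leq\liminf d(x_\alpha,y_\alpha)=0$, hence $x=y$, a contradiction. (Subnets are genuinely needed for the limit-extraction step since type spaces are not sequentially compact in general; you were right to flag that.)

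One remark on hypotheses: you are correct that compactness is required — without it the statement is false (take $X=\mathbb{R}$ with its usual topology and metric, $f(x)=x^2$). The paper's phrasing omits the word ``compact,'' but this is harmless: in \cite{BenYaacov2008} topometric spaces are compact by definition, the preceding lemma in this paper explicitly says ``compact topometric space,'' and every application here is to a type space $S_\lambda(T)$ or a closed subset thereof, all of which are compact. So your implicit assumption is exactly the intended hypothesis and the proof is complete as written.
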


\begin{prop} \label{prop:atom-iff}
Fix a complete first-order theory $T$, distortion system $\Delta$ for $T$, parameter set $C$, and tuples $\bar{a}$ and $\bar{b}$ with $|\bar{a}|=n$ and $|\bar{b}|=m$.
\begin{itemize}
    \item[(i)] $\mathrm{tp}(\bar{b}/C)$ is $\delta_{\Delta(C)}$-atomic and $\mathrm{tp}(\bar{a}/C\bar{b})$ is $\delta_{\Delta(C\bar{b})}$-atomic if and only if $\mathrm{tp}(\bar{a}\bar{b}/C)$ is $\delta_{\Delta(C)}$-atomic.
    \item[(ii)] If $\mathrm{tp}(\bar{b}/C)$ is $d_{\Delta,C}$-atomic and $\mathrm{tp}(\bar{a}/C\bar{b})$ is $d_{\Delta,C\bar{b}}$-atomic, then $\mathrm{tp}(\bar{a}\bar{b}/C)$ is $d_{\Delta,C}$-atomic.
    \item[(iii)] If $\mathrm{tp}(\bar{a}\bar{b}/C)$ is $d_{\Delta,C}$-atomic, then $\mathrm{tp}(\bar{b}/C)$ is $d_{\Delta,C}$-atomic.    
    \item[(iv)] If $\Delta$ is u.u.c.\ and $\mathrm{tp}(\bar{a}\bar{b}/C)$ is $d_{\Delta,C}$-atomic, then $\mathrm{tp}(\bar{a}/C\bar{b})$ is $d_{\Delta,C\bar{b}}$\nobreakdash-atomic.
    \item[(v)] If $\mathrm{tp}(\bar{b}/C)$ is $\delta_{\Delta(C)}$-atomic or $d$-atomic, then it is $d_{\Delta,C}$-atomic.
\end{itemize}
\end{prop}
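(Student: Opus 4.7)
The plan is to use throughout the continuous-function characterization of $d$-atomicity given by the preceding Lemma: $x$ is $d$-atomic iff there is a continuous $f\ge 0$ with $f(x)=0$ and $d(x,y)\downarrow 1\le f(y)$ for every $y$ in the ambient compact topometric space. The backward directions of (i) and (iii) extract witnesses for $\mathrm{tp}(\bar b/C)$ and $\mathrm{tp}(\bar a/C\bar b)$ from an atomicity-witnessing open $W\ni\mathrm{tp}(\bar a\bar b/C)$ in $S_{n+m}(C)$. The projection $\pi\colon S_{n+m}(C)\to S_m(C)$ is an open map (because $\inf$-quantifier closure turns a basic open set $\{\xi<r\}$ into the open set $\{\inf_{\bar x}\xi<r\}$), and by Monotonicity (Proposition~\ref{prop:metric-char}(iii)) the open set $\pi(W)$ lies in the same $\delta$-ball around $\mathrm{tp}(\bar b/C)$. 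Similarly, the intersection $W\cap\pi^{-1}(\mathrm{tp}(\bar b/C))$ corresponds via the canonical isometry to an open set in $S_n(C\bar b)$, using that $\delta_{\Delta(C)}$ restricted to the fiber over $\mathrm{tp}(\bar b/C)$ coincides with $\delta_{\Delta(C\bar b)}$.

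For the forward direction of (i) I would start with witnesses $\varphi_1(\bar y)$ for $\mathrm{tp}(\bar b/C)$ and $\varphi_2^{*}(\bar x,\bar b)$ for $\mathrm{tp}(\bar a/C\bar b)$, where $\varphi_2^{*}(\bar x,\bar y)$ is a $C$-formula. The naive sum $\varphi_1(\bar y)+\varphi_2^{*}(\bar x,\bar y)$ vanishes at $\mathrm{tp}(\bar a\bar b/C)$, so the issue is verifying the atomicity bound. Given $r=\mathrm{tp}(\bar a'\bar b'/C)$, I apply the Extension property (Proposition~\ref{prop:metric-char}(v)) to produce $r^{+}=\mathrm{tp}(\bar a^{**}\bar b/C)$ extending $\mathrm{tp}(\bar b/C)$ with $\delta_{\Delta(C)}(r,r^{+})=\delta_{\Delta(C)}(\mathrm{tp}(\bar b'/C),\mathrm{tp}(\bar b/C))\le\varphi_1(\bar b')$, then use atomicity of $\mathrm{tp}(\bar a/C\bar b)$ on the fiber over $\mathrm{tp}(\bar b/C)$ to bound $\delta_{\Delta(C)}(r^{+},\mathrm{tp}(\bar a\bar b/C))\downarrow 1\le\varphi_2^{*}(\bar a^{**},\bar b)$. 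The witness value is now attached to $(\bar a^{**},\bar b)$ rather than $(\bar a',\bar b')$; to close this gap I invoke the Fact that formulas are uniformly continuous with respect to $\delta_{\Delta(C)}$, so that if $\omega$ is a modulus for $\varphi_2^{*}$, then $\varphi_2^{*}(\bar a^{**},\bar b)\le\varphi_2^{*}(\bar a',\bar b')+\omega(\varphi_1(\bar b'))$. This gives the corrected witness $\theta(\bar x,\bar y)=\varphi_1(\bar y)+\omega(\varphi_1(\bar y))+\varphi_2^{*}(\bar x,\bar y)$. Part (ii) is the formally identical argument with $\delta_{\Delta(C)}$ replaced by $d_{\Delta,C}$ throughout, using that $d_{\Delta,C}$ is itself the $\delta$-metric for the enriched distortion system $D(\Delta(C),\bar c)$ and so inherits Monotonicity, Extension, and the uniform continuity of formulas.

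Parts (iii)--(v) complete the picture. Part (iii) is the direct analogue of the backward argument above with $d_{\Delta,C}$ in place of $\delta_{\Delta(C)}$. For (iv), restricting $d_{\Delta,C}$-atomicity of $\mathrm{tp}(\bar a\bar b/C)$ to the fiber over $\mathrm{tp}(\bar b/C)$ yields atomicity in the restricted metric, which on $S_n(C\bar b)$ is a priori weaker than $d_{\Delta,C\bar b}$ because the latter requires correlations to fix $\bar b$ exactly while the former only demands approximate fixing. The u.u.c.\ hypothesis is exactly what closes this gap: by Corollary~\ref{cor:dd-unif-eq} applied to $T_{C\bar b}$ both $d_{\Delta,C\bar b}$ and the fiber restriction of $d_{\Delta,C}$ are uniformly equivalent to $\delta_{\Delta(C\bar b)}$ (the fiber restriction of $\delta_{\Delta(C)}$), so the $d_{\Delta,C}$-atomicity transfers. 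Part (v) is immediate from the domination observations recorded after Proposition~\ref{prop:dd-estimate}: on $S_n(C)$ one has $d_{\Delta,C}\le d$ and $d_{\Delta,C}$ uniformly dominated by $\delta_{\Delta(C)}$, so any open neighborhood contained in a $\delta_{\Delta(C)}$- or $d$-ball neighborhood is also contained in a $d_{\Delta,C}$-ball neighborhood of appropriate radius. The main obstacles are the modulus-of-continuity correction in (i) forward (without which the naive sum of witnesses fails to satisfy the sharp inequality required of an atomicity witness) and the essential appeal to u.u.c.\ in (iv); the remaining parts are fairly mechanical once the correct characterizations and identifications of the fiber metrics are in hand.
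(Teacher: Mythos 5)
Your proof is correct in substance and essentially parallels the paper's, but with a few genuinely different stylistic choices worth noting. For the backward directions of (i) and (iii), you argue topologically (openness of the projection map, restriction of $\delta$-balls to fibers) where the paper argues syntactically, taking a witness formula $\varphi(\bar x,\bar y)$ for $\mathrm{tp}(\bar a\bar b/C)$ and producing $\inf_{\bar x}\varphi(\bar x,\bar y)$ as witness for $\mathrm{tp}(\bar b/C)$ and $\varphi(\bar x,\bar b)$ for $\mathrm{tp}(\bar a/C\bar b)$; the open-mapping fact is precisely the $\inf$-quantifier in topological clothing, so these are dual presentations of the same argument. For the forward direction of (i), the paper works directly with a correlation structure $(\frk M,\frk N,R)$ witnessing $\delta_{\Delta(C)}(\mathrm{tp}(\bar e/C),\mathrm{tp}(\bar b/C))$ being small and pushes $\bar c$ along $R$ to get $\bar a'$ over $\bar b$, whereas you invoke the Extension property (Proposition~\ref{prop:metric-char}(v)) to produce $r^+$; these are equivalent in content, as Extension is itself proved via exactly such a correlation manoeuvre, but the paper's version is more self-contained. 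The modulus-of-continuity correction you identify as the key obstacle is indeed exactly the paper's $\alpha$-trick with $\chi=\psi+\alpha(\varphi)$ and the inverse of $\e\mapsto 2\e+\alpha^{-1}(\e)$. For (ii), the paper defers to the stronger Lemma~\ref{lem:atom-iff-w}, while you replay (i) with $d_{\Delta,C}$ in place of $\delta_{\Delta(C)}$; this is a legitimate shortcut for the finitary statement.

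One small gap worth flagging: your assertion that $d_{\Delta,C}$ ``inherits Monotonicity and Extension'' by virtue of being a $\delta$-metric is not a literal application of Proposition~\ref{prop:metric-char}. The metric $d_{\Delta,C}$ on $S_{n+m}(C)$ is $\delta^0$ for the distortion system $D(\Delta(C),c_0,\dots,c_{n+m-1})$ while $d_{\Delta,C}$ on $S_m(C)$ is $\delta^0$ for $D(\Delta(C),c_0,\dots,c_{m-1})$ --- these are $0$-type metrics for \emph{different} distortion systems, not $\delta^\lambda$ and $\delta^{\lambda+\alpha}$ for a single one, so Monotonicity and Extension as stated do not directly apply. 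Both properties do hold (Monotonicity because the formula set is simply larger; Extension via Proposition~\ref{prop:dd-estimate} and the standard saturation-and-correlation argument, at least in a weakened form adequate for the argument), but this needs a sentence of justification. Finally, you correctly observe that in (ii) the fiber restriction of $d_{\Delta,C}$ is merely $\le d_{\Delta,C\bar b}$ rather than equal as in (i); since the inequality runs in the favorable direction for (ii), the argument survives, but it is worth making that explicit rather than calling it ``formally identical.''
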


\begin{proof}
\emph{(i):} $(\Rightarrow):$ Let $\varphi(\bar{y})$ be a $C$-formula witnessing that $\mathrm{tp}(\bar{b}/C)$ is $\delta_{\Delta(C)}$\nobreakdash-atomic and let $\psi(\bar{x},\bar{y})$ be a $C$-formula such that $\psi(\bar{x},\bar{b})$ witnesses that $\mathrm{tp}(\bar{a}/C\bar{b})$ is $\delta_{\Delta(C\bar{b})}$-atomic. By the fact above we have that $\psi(\bar{x},\bar{y})$, a function on $S_{n+m}(C)$, is uniformly continuous in the metric $\delta_{\Delta(C)}$. Let $\alpha:\mathbb{R}\rightarrow \mathbb{R}$ be a modulus of uniform continuity for $\psi(\bar{x},\bar{y})$ with regards to $\delta_{\Delta (C)}$, i.e.\ $\alpha$ is a continuous function with $\alpha(0)=0$ such that for any tuples $\bar{c},\bar{e},\bar{u},\bar{v}$, $|\psi(\bar{c},\bar{e})-\psi(\bar{u},\bar{v})|\leq \alpha(\delta_{\Delta(C)}(\mathrm{tp}(\bar{c}\bar{e}/C),\mathrm{tp}(\bar{u}\bar{v}/C)))$. We may assume that $\alpha$ is strictly increasing and in particular invertible.

Consider the formula $\chi(\bar{x},\bar{y})=\psi(\bar{x},\bar{y})+\alpha(\varphi(\bar{y}))$. Pick $\e > 0$ with $\e < 1$ and let $\bar{c},\bar{e}$ be such that $\chi(\bar{c},\bar{e}) < \e$. This implies that $\varphi(\bar{e}) < \alpha^{-1}(\e)$, so  $\delta_{\Delta(C)}(\mathrm{tp}(\bar{e}/C),\allowbreak\mathrm{tp}(\bar{b}/C)) < \alpha^{-1}(\e)$. Let $(\frk{M},\frk{N},R)$ be a structure witnessing this, i.e.\ $R\in\mathrm{cor}(\frk{M},C\bar{e};\allowbreak\frk{N},C\bar{b})$ and $\mathrm{dis}_{\Delta}(R) < \alpha^{-1}(\e)$. Furthermore assume that $\bar{c} \in \frk{M}$ as well and let $\bar{a}^\prime \in \frk{N}$ be such that $(\bar{c},\bar{a}^\prime)\in R$. Now by construction we have that $\delta_{\Delta(C)}(\mathrm{tp}(\bar{e}\bar{c}/C),\allowbreak\mathrm{tp}(\bar{a}^\prime \bar{b}/C)) < \alpha^{-1}(\e)$. This implies that $|\psi(\bar{c},\bar{e})-\psi(\bar{a}^\prime,\bar{b})| < \alpha(\alpha^{-1}(\e))=\e$, so in particular $\psi(\bar{a}^\prime,\bar{b}) < 2\e$ and by construction $\delta_{\Delta(C)}(\mathrm{tp}(\bar{a}^\prime,\bar{b}/C),\mathrm{tp}(\bar{a}\bar{b}/C)) \downarrow 1 < 2\e $. By the triangle inequality this implies that $\delta_{\Delta(C)}(\mathrm{tp}(\bar{c}\bar{e}/C),\mathrm{tp}(\bar{a}\bar{b}/C)) < 2\e + \alpha^{-1}(\e)$. Since $\alpha$ is strictly increasing, $\alpha^{-1}$ is strictly increasing and the function $\e\mapsto 2\e + \alpha^{-1}(e)$ is invertible. Let $f$ be its inverse and now we have that $f(\psi(\bar{x},\bar{y})\downarrow 1)$ is a formula witnessing that $\mathrm{tp}(\bar{a}\bar{b}/C)$ is $\delta_{\Delta(C)}$-atomic.

$(\Leftarrow):$ Let $\varphi(\bar{x},\bar{y})$ be a $C$-formula witnessing that $\mathrm{tp}(\bar{a}\bar{b}/C)$ is $\delta_{\Delta(C)}$-atomic. Consider the formula $\psi(\bar{y})=\inf_{\bar{x}}\varphi(\bar{x},\bar{y})$. To see that this witnesses that $\mathrm{tp}(\bar{b}/C)$ is $\delta_{\Delta(C)}$-atomic, let $\chi(\bar{y})$ be a $\Delta(C)$-formula such that $\models \chi(\bar{b}) \leq 0$. Let $\bar{b}^\prime$ be a tuple such that $\models \psi(\bar{b}^\prime) \leq 0$. In a sufficiently saturated model there exists a tuple $\bar{a}^\prime$ such that $\models \varphi(\bar{a}^\prime\bar{b}^\prime) \leq 0$, so we have that $\bar{b}^\prime \equiv_C \bar{b}$. Now let $\bar{c}$ be any tuple and assume that $\psi(\bar{c}) < \e \leq 1$. That implies that there exists a tuple $\bar{e}$ such that $\varphi(\bar{e},\bar{c}) < \e$, so $\delta_{\Delta(C)}(\mathrm{tp}(\bar{a}\bar{b}/C),\mathrm{tp}(\bar{e}\bar{c}/C)) < \e$. This implies that $\delta_{\Delta(C)}(\mathrm{tp}(\bar{b}/C),\mathrm{tp}(\bar{c}/C)) < \e$ as well, so $\mathrm{tp}(\bar{b}/C)$ is $\delta_{\Delta(C)}$-atomic.

Finally, consider the formula $\varphi(\bar{x},\bar{b})$. This witnesses that $\mathrm{tp}(\bar{a}/C\bar{b})$ is $\delta_{\Delta(C\bar{b})}$-atomic by the previous corollary and since $\Delta(C\bar{b})$ is the same set of formulas as $\{\varphi(\bar{x},\bar{b}):\varphi(\bar{x},\bar{y})\in \Delta(C)\}$.

\emph{(ii):} We will prove a stronger version of this in Lemma \ref{lem:atom-iff-w}.

\emph{(iii):} The proof of this is the same as the proof of the corresponding statement in part \emph{(i)}. 

\emph{(iv:)} If $\Delta$ is u.u.c.\ then $d_{\Delta,C}$ and $\delta_{\Delta(C)}$ are uniformly equivalent and $d_{\Delta,C\bar{b}}$ and $\delta_{\Delta(C\bar{b})}$ are uniformly equivalent, so the statement follows from part \emph{(i)}.

\emph{(v):} This follows from the facts that $d_{\Delta,C} \leq \delta_{\Delta(C)}$ and $d_{\Delta,C}\leq d$ on $S_n(C)$.
\end{proof}

Compare this proposition to these analogous non-approximate statements:
\begin{itemize}
    \item[(i)] $\mathrm{tp}(\bar{a}\bar{b}/C)$ is topologically isolated in $S_{n+m}(C)$ if and only if $\mathrm{tp}(\bar{b}/C)$ is topologically isolated in $S_{m}(C)$ and $\mathrm{tp}(\bar{a}/C\bar{b})$ is topologically isolated in $S_{n}(C\bar{b})$.
    \item[(ii)] If $\mathrm{tp}(\bar{b}/C)$ and $\mathrm{tp}(\bar{a}/C\bar{b})$ are $d$-atomic, then $\mathrm{tp}(\bar{a}\bar{b}/C)$ is $d$-atomic.
    \item[(iii)] If $\mathrm{tp}(\bar{a}\bar{b}/C)$ is $d$-atomic, then $\mathrm{tp}(\bar{b}/C)$ is $d$-atomic.
    \item[(v)] If $\mathrm{tp}(\bar{b}/C)$ is topologically isolated in $S_m(C)$, then it is $d$-atomic.
\end{itemize}

Topological isolation in $S_n(C)$ is of course the same thing as $\delta_{\Delta(C)}$-atomicity when $\Delta$ is the collection of all formulas.

What fails in the statement `$\mathrm{tp}(\bar{a}\bar{b}/C)$ $d_{\Delta,C}$-atomic implies $\mathrm{tp}(\bar{b}/C\bar{a})$ is $d_{\Delta,C\bar{a}}$-atomic' is just the fact that $d_{\Delta,C\bar{a}}$ will in general be larger than $d_{\Delta,C}$ on $S_m(C\bar{a})$. This is the exact same phenomenon that happens with the ordinary $d$-metric.

A slightly beefier version of  Proposition \ref{prop:atom-iff} part \emph{(ii)} will be useful later for `approximately constructible' models. We will prove this for a sequence of singletons for the sake of notational simplicity, but the proof is the same for a sequence of finite tuples.

\begin{lem} \label{lem:atom-iff-w}
Let $T$ be a complete theory, $\Delta$ a distortion system for $T$, $C$ a parameter set, $\bar{a}$ a tuple of elements, and $\{b_i\}_{i<\omega}$ a sequence of elements. If $\mathrm{tp}(b_{<i}/C)$ is $d_{\Delta,C}$-atomic for every $i<\omega$ and $\mathrm{tp}(\bar{a}/Cb_{<\omega})$ is $d_{\Delta,Cb_{<\omega}}$-atomic, then $\mathrm{tp}(\bar{a}b_{<i}/C)$ is $d_{\Delta,C}$-atomic for every $i<\omega$.
\end{lem}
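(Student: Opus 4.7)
The plan is to adapt the argument of Proposition \ref{prop:atom-iff}(i) ($\Rightarrow$) to the infinite-parameter setting. The obstacle is that the atomicity witness $\tilde\psi(\bar x, b_{<\omega})$ for $\mathrm{tp}(\bar a/Cb_{<\omega})$ uses all of $b_{<\omega}$, while we only have finitary witnesses $\varphi_j(\bar y_{<j})$ for the restrictions $\mathrm{tp}(b_{<j}/C)$. I will construct approximate finite-variable witnesses $\chi_i^{(k)}$ at each precision $k$ and assemble them into a single $C$-formula $\chi_i(\bar x, \bar y_{<i})$ witnessing $d_{\Delta,C}$-atomicity of $\mathrm{tp}(\bar a b_{<i}/C)$.

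First, I select the data. Let $\tilde\psi(\bar x, \bar y_{<\omega})$ be a $C$-formula such that $\tilde\psi(\bar x, b_{<\omega})$ witnesses $d_{\Delta, Cb_{<\omega}}$-atomicity of $\mathrm{tp}(\bar a/Cb_{<\omega})$. Because $\tilde\psi$ is continuous on the compact topometric space $(S_{n+\omega}(C), d_{\Delta,C})$, the Fact provides a uniform continuity modulus $\gamma$. As a formula, $\tilde\psi$ is a uniform limit of finite-variable formulas, so for each $k$ I choose $j(k)>i$ and a $C$-formula $\psi_{j(k)}(\bar x, \bar y_{<j(k)})$ with $\|\tilde\psi - \psi_{j(k)}\|_\infty \leq 2^{-k-5}$, and a uniform continuity modulus $\beta_k$ for $\psi_{j(k)}$ on $(S_{n+j(k)}(C), d_{\Delta, C})$. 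In the spirit of Proposition \ref{prop:atom-iff}(i), I set
\[
\chi_i^{(k)}(\bar x, \bar y_{<i}) \;:=\; \inf_{\bar z_{<j(k)-i}} \bigl[\psi_{j(k)}(\bar x, \bar y_{<i}, \bar z) + \beta_k(\varphi_{j(k)}(\bar y_{<i}, \bar z))\bigr],
\]
and observe that $\bar z = b_{[i, j(k))}$ gives $\chi_i^{(k)}(\bar a, b_{<i}) \leq 2^{-k-5}$.

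The core estimate bounds $d_{\Delta, C}(\mathrm{tp}(\bar e\bar c/C), \mathrm{tp}(\bar a b_{<i}/C))$ from above when $\chi_i^{(k)}(\bar e, \bar c) < \delta$ for small $\delta$. The infimum yields $\bar d$ with both $\psi_{j(k)}(\bar e, \bar c\bar d) < \delta$ and $\varphi_{j(k)}(\bar c\bar d) < \beta_k^{-1}(\delta) =: r$; the latter gives $d_{\Delta, C}(\mathrm{tp}(\bar c\bar d/C), \mathrm{tp}(b_{<j(k)}/C)) < r$ via the witness property of $\varphi_{j(k)}$. I then apply the extension property (Proposition \ref{prop:metric-char}(v), for the distortion system underlying $d_{\Delta, C}$) twice: first to extend this pair of types to $(\mathrm{tp}(\bar c\bar d\bar c'/C), \mathrm{tp}(b_{<\omega}/C))$ at distance $<r$, and then to extend that pair to $(\mathrm{tp}(\bar e\bar c\bar d\bar c'/C), \mathrm{tp}(\bar a^{**} b_{<\omega}/C))$ at distance $<r$ for some $\bar a^{**}$ realized in a saturated model. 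Uniform continuity of $\tilde\psi$ combined with the approximation bound gives $\tilde\psi(\bar a^{**}, b_{<\omega}) \leq \delta + 2^{-k-5} + \gamma(r)$. The witness property of $\tilde\psi(\cdot, b_{<\omega})$ then yields $d_{\Delta, Cb_{<\omega}}(\mathrm{tp}(\bar a^{**}/Cb_{<\omega}), \mathrm{tp}(\bar a/Cb_{<\omega})) \leq \delta + 2^{-k-5} + \gamma(r)$, which bounds $d_{\Delta, C}(\mathrm{tp}(\bar a^{**} b_{<i}/C), \mathrm{tp}(\bar a b_{<i}/C))$ by the same value after noting that $d_{\Delta, C}$ on joined $(n+\omega)$-types is dominated by $d_{\Delta, Cb_{<\omega}}$ and applying Proposition \ref{prop:metric-char}(iii). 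Restricting the second extension gives $d_{\Delta, C}(\mathrm{tp}(\bar e\bar c/C), \mathrm{tp}(\bar a^{**} b_{<i}/C)) < r$, so the triangle inequality yields a total bound of $r + \gamma(r) + \delta + 2^{-k-5}$, which is at most $2^{-k}$ for $\delta$ small enough.

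Finally I assemble the witnesses by a uniformly convergent series, e.g.\ $\chi_i := \sum_k 2^{-k-1} \bigl((\chi_i^{(k)} \dotdiv 2^{-k-5}) \downarrow 1\bigr)$. This is a $C$-formula with $\chi_i(\bar a, b_{<i}) = 0$, and for every $k$, a small enough value of $\chi_i(q)$ forces $\chi_i^{(k)}(q)$ into the regime where the above estimate applies and yields $d_{\Delta, C}(q, \mathrm{tp}(\bar a b_{<i}/C)) \leq 2^{-k}$, witnessing the atomicity. The main obstacle is the transfer step in the core estimate: the two-fold extension manufactures a joint configuration in which approximate satisfaction of $\psi_{j(k)}$ at the ``wrong'' parameters $\bar c\bar d$ can be promoted, via uniform continuity of $\tilde\psi$, to near-satisfaction of the actual witness $\tilde\psi(\cdot, b_{<\omega})$, after which the hypothesis on $\mathrm{tp}(\bar a/Cb_{<\omega})$ is finally usable.
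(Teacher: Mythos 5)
Your proposal follows essentially the same strategy as the paper's proof: both combine the witness $\psi$ for $\mathrm{tp}(\bar a/Cb_{<\omega})$ with the witnesses $\varphi_j$ for the initial segments inside an $\inf$ over the tail variables (with a uniform-continuity modulus applied to $\varphi_j$), and both then run the same transfer argument --- near-satisfaction of the combined formula forces the $\bar y$-part close to $b_{<j}$, a correlation carries the $\bar x$-part to a tuple $\bar a^{**}$ over $b_{<\omega}$ where $\psi(\cdot,b_{<\omega})$ is nearly satisfied, and the triangle inequality finishes. Your bookkeeping differs in that you approximate $\psi$ by finite-variable formulas $\psi_{j(k)}$ and sum one witness per precision level $k$, whereas the paper keeps $\psi$ intact, works with the weighted metric $d^\dagger(\bar x\bar y,\bar x'\bar y')=\sup_i 2^{-i}d_{\Delta,C}(\mathrm{tp}(\bar x y_{<i}/C),\mathrm{tp}(\bar x' y'_{<i}/C))$, and builds a single formula $\chi_j$ with the term $\sup_i 2^{-i}\alpha(\varphi_i(y_{<i}))$; both variants are workable.

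One step needs repair as written: the two appeals to ``the extension property \dots at distance $<r$'' for $d_{\Delta,C}$, cited to Proposition \ref{prop:metric-char}(v). That proposition gives exact distance-preserving extension for $\delta_\Delta$ with a \emph{fixed} distortion system; extending $d_{\Delta,C}$-close types to more variables changes the underlying distortion system $D(\Delta(C),\bar c)$ (new constants, hence new formulas $d(x,c_m)$), and moreover a $d_{\Delta,C}$-bound only yields a correlation matching the realization to a tuple that is metrically \emph{close} to it (Proposition \ref{prop:dd-estimate}(ii)), with the return trip governed by the $\e$--$\delta$ modulus of Proposition \ref{prop:dd-estimate}(i) and Lemma \ref{lem:coarsest}. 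So the extended pairs are at distance $<\e(r)$ for some modulus $\e(\cdot)$ vanishing at $0$, not $<r$; the same caveat applies to your claim that the $d_{\Delta,Cb_{<\omega}}$-bound on $\mathrm{tp}(\bar a^{**}/Cb_{<\omega})$ transfers to a $d_{\Delta,C}$-bound on the joined types ``by the same value.'' This is exactly why the paper threads the modulus $\beta$ from Proposition \ref{prop:dd-estimate}(i) through all of its estimates. Since your final bound only requires every error term to tend to $0$ as $\delta\to 0$ for fixed $k$, inserting these moduli repairs the argument without changing its structure, but the quantitative claims as stated are not justified.
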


\begin{proof}
For each $i<\omega$, let $\varphi_i(y_0,\dots,y_{i-1})$ be a $[0,1]$-valued $C$-formula witnessing that $\mathrm{tp}(b_{<i}/C)$ is $d_{\Delta,C}$-atomic, and let $\psi(\bar{x},\bar{y})$ be a $C$-formula such that $\psi(\bar{x},b_{<\omega})$ witnesses that $\mathrm{tp}(\bar{a}/Cb_{<\omega})$ is $d_{\Delta,Cb_{<\omega}}$-atomic. Since $\psi(\bar{x},\bar{y})$ can be realized as a uniformly convergent limit of formulas in finitely many variables, we have that $\psi(\bar{x},\bar{y})$ is uniformly continuous with regards to the metric $$d^\dagger(\bar{x}\bar{y},\bar{x}^\prime\bar{y}^\prime)=\sup_{i<\omega} 2^{-i} d_{\Delta,C}(\mathrm{tp}(\bar{x}y_{<i}/C),\mathrm{tp}(\bar{x}^\prime y_{<i}^\prime/C)).$$
Let $\alpha$ be a modulus of uniform continuity for $\psi$ with regards to $d^\dagger$. We may assume that $\alpha$ is strictly increasing and so in particular invertible. Consider the formulas 
$$\chi_j(\bar{x},y_0,\dots,y_{j-1})=\inf_{y_j,y_{j+1},\dots}\psi(\bar{x},\bar{y})+\sup_{i<\omega} 2^{-i} \alpha(\varphi_i(y_{<i}))$$ 
(despite appearances these are first-order formulas). For any $\bar{a}^\prime b^\prime_{<i}$ such that $\models\chi_i(\bar{a}^\prime b^\prime_{<i}) \leq 0$ there is an elementary extension with an $\omega$-tuple $b^\prime_i b^\prime_{i+1}\dots$ such that $\models\psi(\bar{a}^\prime,b_{<\omega}^\prime) \leq 0$ and $\models \varphi_i(b_{<i}) \leq 0$ for each $i<\omega$, so $\bar{a}^\prime b^\prime_{<i} \equiv_C \bar{a} b_{<i}$.

Let $\beta:\mathbb{R}\rightarrow \mathbb{R}$ be a continuous strictly increasing function with $\beta(0)=0$ that witnesses Proposition \otherpaper \ref{prop:dd-estimate} part \emph{(i)} in this situation, i.e.\ for every $\e>0$ if there are models $\frk{M},\frk{N}\models T$ both containing $C$, tuple $\bar{m}\in\frk{M}$ and $\bar{n},\bar{b}\in\frk{N}$, and an $R\in\mathrm{cor}(\frk{M},C\bar{m};\frk{N},C\bar{n})$ with $\mathrm{dis}_\Delta(R),d^\frk{N}(\bar{n},\bar{b})\leq \beta(\e)$, then $d_{\Delta,C}(\mathrm{tp}(\bar{m}/C),\allowbreak\mathrm{tp}(\bar{n}/C))\leq \e$.

Pick $\e > 0$ with $\e \leq 1$ and $j < \omega$ and let $\bar{c}e_{<j}$ be a tuple such that 
$$\models\chi_j(\bar{c},e_{<j})< \frac{1}{2}\beta^{-1}\left(\frac{1}{2}\e\right) \downarrow 2^{-j}\beta^{-1}\left(\alpha^{-1}\left(\frac{1}{2}\beta^{-1}\left(\frac{1}{2}\e\right)\right) \downarrow 2^{-j-1}\e\right).$$

Let $e_j e_{j+1}\dots$ be a tuple witnessing the infimum in $\chi_j(\bar{c},e_{<j})$, so in particular
$$\models \psi(\bar{c},e_{<\omega}) < \frac{1}{2}\beta^{-1}\left(\frac{1}{2}\e\right) \downarrow 2^{-j}\beta^{-1}\left(\alpha^{-1}\left(\frac{1}{2}\beta^{-1}\left(\frac{1}{2}\e\right)\right) \downarrow 2^{-j-1}\e\right).$$

%This implies that there is an $\omega$-tuple $e_j e_{j+1}\dots$ such that $\models \psi(\bar{c},\bar{e})+\sup_{i<\omega} 2^{-i}\alpha(\varphi_i(e_{<i})) < 2^{-j}\e$. 
We also have that $\models2^{-j}\varphi_j(e_{<j}) < 2^{-j}\beta^{-1}(\alpha^{-1}(\frac{1}{2}\beta^{-1}(\frac{1}{2}\e)) \downarrow 2^{-j-1}\e),$ so $$d_{\Delta,C}(\mathrm{tp}(e_{<k}/C),\mathrm{tp}(b_{<j}/C)) < \beta^{-1}\left(\alpha^{-1}\left(\frac{1}{2}\beta^{-1}\left(\frac{1}{2}\e\right)\right) \downarrow 2^{-j-1}\e\right).$$
By Proposition \otherpaper \ref{prop:dd-estimate} part \emph{(ii)} we can find a triple $(\frk{M},\frk{N},R)$ such that $e_{<j} \in \frk{M}$, $b_{<j}\in \frk{N}$, $R\in\mathrm{cor}(\frk{M},Ce_{<j};\frk{N},C\bar{n})$, and there is a tuple $\bar{n}\in\frk{N}$ such that $d^\frk{N}(\bar{n},b_{<j})$ and $\mathrm{dis}_{\Delta}(R)$ are both less than  $\beta^{-1}(\alpha^{-1}(\frac{1}{2}\beta^{-1}(\frac{1}{2}\e)) \downarrow 2^{-j-1}\e)$. We may assume that all of $b_{<\omega}$ is in $\frk{N}$.%%EDITED SOME WORDS HERE, MIGHT HAVE INTRODUCED AN ERROR?

If we let $\bar{a}^\prime \in \frk{N}$ be a tuple such that $(\bar{e},\bar{a}^\prime)\in R$, then we have that $$d_{\Delta,C}(\mathrm{tp}(\bar{c}e_{<k}/C),\mathrm{tp}(\bar{a}^\prime b_{<k}/C)) < \alpha^{-1}\left(\frac{1}{2}\beta^{-1}\left(\frac{1}{2}\e\right)\right) \downarrow 2^{-j-1}\e,$$ for each $k\leq j$, by Proposition \otherpaper \ref{prop:dd-estimate} part \emph{(i)}. By construction this implies that $$d^\dagger(\bar{c}e_{<\omega},\bar{a}^\prime b_{<\omega}) < \alpha^{-1}\left(\frac{1}{2}\beta^{-1}\left(\frac{1}{2}\e\right)\right) \downarrow 2^{-j-1}\e.$$ Now by the choice of $\alpha$ we get that $|\psi(\bar{c},e_{<\omega})-\psi(\bar{a}^\prime,b_{<\omega})| < \frac{1}{2}\beta^{-1}(\frac{1}{2}\e)$ and in particular $\psi(\bar{a}^\prime,b_{<\omega}) < \beta^{-1}(\frac{1}{2}\e)$, since $\psi(\bar{c},e_{<\omega}) < \frac{1}{2}\beta^{-1}(\frac{1}{2}\e)$.

This implies that $d_{\Delta,Cb_{<\omega}}(\mathrm{tp}(\bar{a}^\prime/Cb_{<\omega}),\mathrm{tp}(\bar{a}/Cb_{<\omega})) < \beta^{-1}(\frac{1}{2}\e)$.  Proposition \otherpaper \ref{prop:dd-estimate} now implies that $d_{\Delta,C}(\mathrm{tp}(\bar{a}^\prime b_{<j}/C),\mathrm{tp}(\bar{a} b_{<j}/C)) < \frac{1}{2}\e.$

Since $d^\dagger(\bar{c}e_{<\omega},\bar{a}^\prime b_{<\omega}) < 2^{-j-1}\e$, we have by definition that $$2^{-j} d_{\Delta,C}(\mathrm{tp}(\bar{c}e_{<j}/C),\mathrm{tp}(\bar{a}^\prime b_{<j}/C)) < 2^{-j-1}\e$$ %Added the /C, haven't read it to make sure it's supposed to be there.
and so $d_{\Delta,C}(\mathrm{tp}(\bar{c}e_{<j}/C),\mathrm{tp}(\bar{a}^\prime b_{<j}/C)) < \frac{1}{2}\e$. 

By the triangle inequality we have that $d_{\Delta,C}(\mathrm{tp}(\bar{c}e_{<j}/C),\mathrm{tp}(\bar{a} b_{<j}/C)) < \frac{1}{2}\e + \frac{1}{2}\e = \e$. Since we can do this for any $\e > 0$ and $j<\omega$, we have that $\mathrm{tp}(\bar{a}b_{<j}/C)$ is $d_{\Delta,C}$-atomic for every $j<\omega$.
\end{proof}

\section{Approximate Categoricity}

Given an approximate notion of isomorphism it is natural to consider a corresponding notion of approximate categoricity.

\begin{defn}
Fix a theory $T$ and a distortion system $\Delta$ for $T$. 
\begin{itemize}
    \item $T$ is \emph{$\Delta$-$\kappa$-categorical} if for any two models $\frk{M},\frk{N}\models T$ of density character $\kappa$, $a_\Delta(\frk{M},\frk{N})=0$.
    \item $T$ is \emph{strongly $\Delta$-$\kappa$-categorical} if for any two models $\frk{M},\frk{N}\models T$ of density character $\kappa$, $\frk{M}\approxx_\Delta \frk{N}$. \qedhere
\end{itemize}
\end{defn}

Of course $\Delta$-$\kappa$-categoricity and strong $\Delta$-$\kappa$-categoricity are equivalent when $\Delta$ is regular. The motivating examples of distortion systems are regular and certainly strong $\Delta$-$\kappa$-categoricity is the more compelling notion, but all of the results in this paper easily generalize to what we are calling $\Delta$-$\kappa$-categoricity, with no apparent gain from assuming strong $\Delta$-$\kappa$-categoricity and no clear way to characterize strong $\Delta$-$\kappa$-categoricity. This, together with the necessity of introducing a weaker version of $\Delta$-$\kappa$-categoricity in Section \ref{subsec:mor-thm}, lead us to this naming convention.

Of course a natural question is whether or not this distinction even matters.

\begin{quest}
Does there exist a theory $T$, a distortion system $\Delta$ for $T$, and an infinite cardinal $\kappa$ such that $T$ is $\Delta$-$\kappa$-categorical but not strongly $\Delta$\nobreakdash-$\kappa$\nobreakdash-\hskip0pt categorical?
\end{quest}

%Even though `almost $\Delta$-approximately $\kappa$-categorical' is a mouthful we will refrain from giving it a more concise name since `almost $\Delta$-$\kappa$-categorical' sounds too similar to $\Delta$-approximately $\kappa$-categorical.

\subsection{Separable Categoricity}\label{subsec:w-cat} %Ryll-Nardzewski Theorem for Regular distortion systems

We can restate Ben Yaacov's generalization of the Ryll-Nardzewski theorem to the context of perturbations in our language. In this section we will extend this result to distortion systems in general.

\begin{thm}[Ben Yaacov \cite{OnPert}] \label{thm:BY-cat}
Let $T$ be a complete theory with non-compact models in a countable language and $\Delta$ a functional distortion system for $T$.
\begin{itemize}
    \item The following are equivalent.
    \begin{itemize}
    \item $T$ is $\Delta$-$\omega$-categorical.
    \item For every finite tuple of parameters $\bar{a}$ and $n<\omega$, every type in $S_n(\bar{a})$ is weakly $d_\Delta$-atomic-in-$S_n(\bar{a})$.
    \item The same but with $n$ restricted to $1$.
    \end{itemize}
    \item If $(S_n(T),d_\Delta)$ is metrically compact for every $n<\omega$ (equivalently if every $\varnothing$-type is $d_\Delta$-isolated), then $T$ is $\Delta$-$\omega$-categorical.
\end{itemize}
\end{thm}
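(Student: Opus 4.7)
The plan is to adapt the classical Ryll-Nardzewski argument to the approximate setting, with weak $d_\Delta$-atomicity playing the role that topological isolation plays classically, and then derive the compactness corollary. I would prove $(1) \Leftrightarrow (2) \Leftrightarrow (3)$ by showing $(2) \Rightarrow (1)$ via back-and-forth, $(1) \Rightarrow (3)$ by omitting types, and $(3) \Rightarrow (2)$ by induction on arity.

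For $(2) \Rightarrow (1)$, I would fix separable $\frk{M}, \frk{N} \models T$ with dense enumerations and build an almost correlation of arbitrarily small $\Delta$-distortion by back-and-forth. At stage $k$, I have matched tuples $\bar{m}^{(k)} \in \frk{M}$ and $\bar{n}^{(k)} \in \frk{N}$ with $d_\Delta(\tp(\bar{m}^{(k)}),\tp(\bar{n}^{(k)}))$ small. To add $m_{k+1} \in \frk{M}$, use the extension property (Proposition 2.5(v)) to lift $\tp(\bar{m}^{(k)}, m_{k+1})$ to a type $q \in S_1(\bar{n}^{(k)})$ close in $d_\Delta$. By hypothesis $q$ is weakly $d_\Delta$-atomic-in-$S_1(\bar{n}^{(k)})$, so for every $\eta > 0$ the ball $B_{\leq \eta}^{d_\Delta}(q)$ has non-empty topological interior. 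Since $\frk{N}$ is separable the realized $1$-types over $\bar{n}^{(k)}$ are topologically dense in $S_1(\bar{n}^{(k)})$, so I can pick $n' \in \frk{N}$ whose type lies in that interior. Proposition 2.12 then translates this into an extension of the correlation with controlled distortion; with $\eta_k$ chosen summable and small, the diagonal limit of the back-and-forth gives an almost correlation of distortion zero.

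For $(1) \Rightarrow (3)$, I contrapose: suppose some $p \in S_1(\bar{a})$ with $\bar{a}$ a finite tuple fails to be weakly $d_\Delta$-atomic, so there is $\epsilon > 0$ with $S_1(\bar{a}) \setminus B_{\leq \epsilon}^{d_\Delta}(p)$ dense open. The set $\cset{d_\Delta(\cdot,p) > \epsilon}$ is then expressible using the lower semi-continuity built into Proposition 2.5(ii) applied to $D(\Delta(\bar{a}),\bar{c})$, so the continuous omitting types theorem produces a separable $\frk{N} \models T$ realizing $\tp(\bar{a})$ but omitting every $1$-type within $d_\Delta$-distance $\epsilon$ of $p$ (over the realization of $\bar{a}$), while some separable $\frk{M} \models T$ realizes both $\tp(\bar{a})$ and $p$. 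Any small-distortion almost correlation between $\frk{M}$ and $\frk{N}$ would have to match realizations of $\bar{a}$ approximately, and then a realization of $p$ in $\frk{M}$ would force a realization in $\frk{N}$ with type $d_\Delta$-close to $p$, contradicting the omission.

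The equivalence $(3) \Leftrightarrow (2)$ is by induction: weak $d_\Delta$-atomicity of $\tp(\bar{a}\bar{b}/C)$ can be built from weak $d_\Delta$-atomicity of $\tp(\bar{b}/C)$ and of $\tp(\bar{a}/C\bar{b})$ by a decomposition parallel to Lemma 3.8 (only easier, since weak atomicity is a pointwise existence-of-interior condition rather than an isolation condition). Finally, the compactness corollary: if $(S_n(T), d_\Delta)$ is metrically compact, then since $S_n(T)$ is also topologically compact and $d_\Delta$ is lower semi-continuous, the $d_\Delta$-isolated types are topologically dense, so every $\emptyset$-type is weakly $d_\Delta$-atomic; since parameter extensions preserve the relevant compactness, the hypothesis of $(2)$ holds and we conclude via the equivalence. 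The main obstacle in my view is the back-and-forth step: weak atomicity yields only non-empty interior, not uniform control, so one must carefully schedule the $\eta_k$ and invoke the uniform estimates from Proposition 2.12 and Lemma 2.11 to keep cumulative distortion bounded — a delicate but standard piece of bookkeeping.
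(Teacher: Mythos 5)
This theorem is quoted from Ben Yaacov and not proved in the paper; the paper's own proof is of the generalization Theorem~\ref{thm:w-cat-main}, which drops the functionality hypothesis, and your overall architecture (back-and-forth for sufficiency, omitting types for necessity) matches that proof. Two of your three steps are essentially the paper's: your $(1)\Rightarrow(3)$ is Proposition~\ref{prop:omit-types} (and it generalizes verbatim to $n$-types, so it already gives $(1)\Rightarrow(2)$), and your back-and-forth is Proposition~\ref{prop:sat-approx} plus Lemma~\ref{lem:sat-alt}. On the back-and-forth, one point you gesture at but should make explicit: $d_\Delta$-closeness of $\tp(\bar m^{(k)})$ and $\tp(\bar n^{(k)})$ does \emph{not} yield a small-distortion correlation matching $\bar m^{(k)}$ to $\bar n^{(k)}$ itself --- Proposition~\ref{prop:dd-estimate}(ii) only matches $\bar m^{(k)}$ to a tuple metrically near $\bar n^{(k)}$ --- so the previously chosen points must be allowed to drift and re-converge (the arrays Cauchy in $j$ in Proposition~\ref{prop:sat-approx}); and the limit is a correlation of distortion $\le\e$ for each $\e$ separately, giving $a_\Delta=0$ rather than a single distortion-zero correlation (which is then upgraded to $\rho_\Delta=0$ using that functional implies regular).

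The genuine gap is your $(3)\Rightarrow(2)$ step. The proposed induction ``weak atomicity of $\tp(\bar b/C)$ and of $\tp(\bar a/C\bar b)$ gives weak atomicity of $\tp(\bar a\bar b/C)$'' is not easier than Proposition~\ref{prop:atom-iff}; it fails as stated because weak atomicity does not compose pointwise. If $U\subseteq B_{\le\e}^{d_\Delta}(\tp(b/\bar a))$ is the nonempty open witness in $S_1(\bar a)$, then for each realized $q_1=\tp(b'/\bar a)\in U$ the witness for the second coordinate is an open subset of the \emph{fiber} $S_1(\bar a b')$, produced by a formula $\psi(x,\bar a,b')$ that depends on $b'$; to exhibit a nonempty open subset of $S_2(\bar a)$ inside $B_{\le\e}^{d_\Delta}(\tp(ab/\bar a))$ you need a single $\psi(x,\bar a,y)$ that works uniformly for all $y$ with $\tp(y/\bar a)\in U$, and weak atomicity provides no such uniformity. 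The paper sidesteps this by converting weak atomicity into the statement ``approximately realized in every separable model'' (Proposition~\ref{prop:omit-types}), which \emph{does} compose one variable at a time via the extension property (Proposition~\ref{prop:one-enough}). Fortunately your other pieces already close the cycle without this step: $(2)\Rightarrow(3)$ is trivial by restriction, your back-and-forth only ever consumes $1$-types over finite tuples and hence proves $(3)\Rightarrow(1)$, and your omitting-types argument gives $(1)\Rightarrow(2)$. So the proof is repairable by reorganizing, but the induction as you describe it should be deleted. The compactness bullet is fine modulo the standard topometric fact (a finite cover by closed $d_\Delta$-balls plus Baire category) that metric compactness forces every type to be (weakly) $d_\Delta$-atomic, which the paper also uses without proof.
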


$d_\Delta$ on $S_1(\bar{a})$ is the restriction of $d_\Delta$ on $S_{1+|\bar{a}|}(T)$ to the subspace corresponding to $S_1(\bar{a})$. This is $\tilde{d}_{\frk{p}}$ in Ben Yaacov's notation. Note that $d_\Delta$ on $S_1(\bar{a})$ is not the same thing as $d_{\Delta,\bar{a}}$ on $S_1(\bar{a})$. For example, if $\Delta$ is the collection of all formulas, so that $\rho_\Delta$ corresponds to isomorphism, then $d_\Delta = d$ for the type spaces $S_n(T)$, but over parameters the topometric space $(S_n(\bar{a}),d)$, where $d$ is the ordinary $d$-metric on $S_n(\bar{a})$, is not the same topometric space as $(S_n(\bar{a}),d_\Delta)=(S_n(\bar{a}),d^{S_{n+|\bar{a}|}(T)})$ in general, where $d^{S_{n+|\bar{a}|}(T)}$ is the $d$-metric on $S_{n+|\bar{a}|}(T)$.

%Our primary contribution here will be an extension of this result to all distortion systems in general although 
 %Although the general flow of these arguments will be the same as those in [On perturbations of continuous structures], u.u.c.\ distortion systems are better behaved in some important ways than functional distortion systems. In particular the fact that $d_\Delta$ and $\delta_\Delta$ are uniformly equivalent simplifies things.

%We will also look at the situation with adding constants to the theory and give some simple examples of the sharpness of these results. (MOVE THIS TO INTRO?)

%But first we will show that the second bullet point of Theorem \ref{thm:BY-cat} holds for arbitrary regular distortion systems.

%\begin{prop}
%Let $T$ be a complete countable theory with non-compact models and let $\Delta$ be a distortion system for $T$. If $(S_n(T),d_\Delta)$ is metrically compact for every $n<\omega$ then $T$ is almost $\Delta$-approximately $\omega$-categorical. In particular if $\Delta$ is regular then $T$ it is $\Delta$-$\omega$-categorical.
%\end{prop}

%\begin{proof}
%Let $\frk{M}$ and $\frk{N}$ be separable models of $T$.
%\end{proof}

\begin{lem}
Fix a countable first-order theory $T$ and a distortion system $\Delta$ for $T$. For any type $p(\bar{x})\in S_n(T)$ for some $n$, and any extension $q(\bar{x},\bar{y})\in S_{n+m}(T)$, there is a separable model $\frk{M}\models T$ such that $\frk{M}$ realizes $p$ and for any $n$-tuple $\bar{a}\in\frk{M}$ and any $\e>0$ there is an $m$-tuple $\bar{b}\in\frk{M}$ such that $d_\Delta(q,\mathrm{tp}(\bar{a}\bar{b})) < d_\Delta(p,\mathrm{tp}(\bar{a}))+\e$.
\end{lem}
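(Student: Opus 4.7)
The plan is to build $\frk{M}$ as the metric completion of an elementary chain of separable structures $\frk{M}_0 \prec \frk{M}_1 \prec \cdots$, together with a growing countable dense subset $D = \bigcup_i D_i$, arranged so that every witness demanded at a tuple in $D_i$ is realized in $\frk{M}_{i+1}$. First I would take $\frk{M}_0 \models T$ separable and realizing $p$ via downward L\"owenheim--Skolem (which works since the language is countable), and pick a countable dense $D_0 \subseteq \frk{M}_0$.

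At stage $i+1$, I enumerate the tuples $\bar{a} \in D_i^n$ and for each one invoke the extension property of $d_\Delta$ (Proposition~\ref{prop:metric-char}(v), applied with the pair $p,\tp(\bar{a}) \in S_n(T)$ and the extension $q \in S_{n+m}(T)$ of $p$). This produces an $r_{\bar{a}} \in S_{n+m}(T)$ extending $\tp(\bar{a})$ with $d_\Delta(q, r_{\bar{a}}) \leq d_\Delta(p, \tp(\bar{a}))$. I realize each $r_{\bar{a}}$ over the corresponding $\bar{a}$ in a common sufficiently saturated elementary extension of $\frk{M}_i$, then extract a separable $\frk{M}_{i+1} \succ \frk{M}_i$ containing every such realization, and enlarge $D_i$ to a countable dense $D_{i+1} \subseteq \frk{M}_{i+1}$. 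Let $\frk{M}$ be the metric completion of $\bigcup_i \frk{M}_i$; this is separable, models $T$, realizes $p$, and $D$ is countable dense in $\frk{M}$.

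For the verification, fix $\bar{a} \in \frk{M}^n$ and $\e > 0$. Density of $D$ lets me pick $\bar{a}' \in D_i^n$ for some $i$ with $d(\bar{a},\bar{a}') < \e/3$, and by construction there is $\bar{b}' \in \frk{M}_{i+1}$ with $d_\Delta(q, \tp(\bar{a}'\bar{b}')) \leq d_\Delta(p, \tp(\bar{a}'))$. Using the triangle inequality for $d_\Delta$ together with the fact that $d_\Delta \leq d$ on $S_\lambda(T)$ (remarked just after Proposition~\ref{prop:dd-estimate}) applied both to the pair $(\bar{a}\bar{b}',\bar{a}'\bar{b}')$ and to the pair $(\bar{a},\bar{a}')$, I obtain
$$d_\Delta(q, \tp(\bar{a}\bar{b}')) \leq d_\Delta(q,\tp(\bar{a}'\bar{b}')) + d(\bar{a},\bar{a}') \leq d_\Delta(p,\tp(\bar{a})) + 2d(\bar{a},\bar{a}') < d_\Delta(p,\tp(\bar{a})) + \e.$$

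The only step that requires genuine care is the extension itself. If Proposition~\ref{prop:metric-char}(v) is read strictly as a statement about $\delta_\Delta$, the $d_\Delta$-version I used can be recovered through the identification $d_\Delta = \delta^0_{D(\Delta,\bar{c})}$ on $0$-types in the language with constants naming $\bar{a}$, or else built by hand from Proposition~\ref{prop:dd-estimate}: take a near-optimal correlation witnessing $d_\Delta(p,\tp(\bar{a}))$, pass to a saturated elementary extension of the left model realizing $q$ over the image of $\bar{a}$, canonically extend the correlation to absorb the new tuple (which is possible because correlations are sortwise surjective), and read off $r_{\bar{a}}$ from the corresponding image tuple in the right model. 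Any gap between equality and the inequality needed is absorbed by enumerating pairs $(\bar{a},k)$ and demanding accuracy $1/k$ at each stage, which ultimately sits below the $\e$ slack of the conclusion.
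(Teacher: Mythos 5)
Your proposal is correct and follows essentially the same route as the paper: build a chain of countable/separable structures in which witnesses for the extension property of $d_\Delta$ are realized over a dense set of $n$-tuples at each stage, pass to the completion, and verify the estimate for an arbitrary tuple via the triangle inequality together with $d_\Delta \leq d$. The only difference is cosmetic (the paper works with countable pre-models and allows $2^{-i}$ slack at stage $i$, while you carry explicit dense sets and justify the extension step for $d_\Delta$ in more detail), so there is nothing to flag.
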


\begin{proof}
Let $\frk{M}_0$ be a countable pre-model of $T$ realizing $p$. Proceed inductively. At stage $i$, given $\frk{M}_i$, a countable pre-model of $T$, find $\frk{M}_{i+1}\succeq \frk{M}_i$ such that for every $n$-tuple $\bar{a}\in\frk{M}_i$ there is an $m$-tuple $\bar{b}\in\frk{M}_{i+1}$ such that $d_\Delta(q,\mathrm{tp}(\bar{a}\bar{b})) < d_\Delta(p,\mathrm{tp}(\bar{a})) + 2^{-i}$. Note that this is always possible since $d_\Delta$ has the extension property.

Now let $\frk{M}$ be the completion of the union $\bigcup_{i<\omega}\frk{M}_i$. Let $\bar{a}$ be an $n$-tuple in $\frk{M}$. For any $\e > 0$, find $\bar{a}_i \in \frk{M}_i$ such that $d(\bar{a},\bar{a}_i) < \frac{1}{3} \e$ and such that $2^{-i} < \frac{1}{3}\e$. Then by construction there is $\bar{b} \in \frk{M}$ such that $d_\Delta(q,\mathrm{tp}(\bar{a}_i\bar{b})) < d_\Delta(p,\mathrm{tp}(\bar{a}_i)) + \frac{1}{3}\e$. This implies that $d_\Delta(q,\mathrm{tp}(\bar{a}_i\bar{b})) < d_\Delta(p,\mathrm{tp}(\bar{a})) + \frac{2}{3}\e$, but we also have that $d_\Delta(q,\mathrm{tp}(\bar{a}\bar{b})) < d_\Delta(q,\mathrm{tp}(\bar{a}_i\bar{b}))+ d(\bar{a}_i\bar{b},\bar{a}\bar{b})$. Finally $d(\bar{a}_i\bar{b},\bar{a}\bar{b}) = d(\bar{a}_i,\bar{a}) < \frac{1}{3} \e$, so putting this together gets $d_\Delta(q,\mathrm{tp}(\bar{a}\bar{b})) < d_\Delta(q,\mathrm{tp}(\bar{a}_i\bar{b})) + \e $, as required.
\end{proof}

\begin{defn}
A structure $\frk{M}$ is \emph{approximately $\Delta$-$\omega$-saturated} if for every $\bar{a} \in \frk{M}$, every $p\in S_n(\bar{a})$, and every $\e>0$, there is $\bar{b} \in \frk{M}$ such that $d_\Delta(p,\mathrm{tp}(\bar{a}\bar{b})) < \e$.
\end{defn}

When $\Delta$ is the collection of all formulas, a structure is approximately $\Delta$\nobreakdash-$\omega$\nobreakdash-\hskip0pt saturated if and only if it is approximately $\omega$-saturated, hence the redundant sounding name.

\begin{prop} \label{prop:one-enough}
A structure $\frk{M}$ is approximately $\Delta$-$\omega$-saturated if and only if it is for $1$-types, i.e.\ for every $\bar{a} \in \frk{M}$, every $p\in S_1(\bar{a})$, and every $\e>0$, there is $b \in \frk{M}$ such that $d_\Delta(p,\mathrm{tp}(\bar{a}b)) < \e$.
\end{prop}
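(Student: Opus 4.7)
The forward direction is immediate from the definition; the content is in the reverse direction, and my plan is a straightforward induction on $n$. The base case $n=1$ is exactly the hypothesis. For the inductive step, given $p(x_1,\dots,x_{n+1}) \in S_{n+1}(\bar{a})$ and $\e>0$, I first pass to the restriction $p_n \in S_n(\bar{a})$ to the first $n$ variables and apply the inductive hypothesis with tolerance $\e/2$ to produce $b_1,\dots,b_n \in \frk{M}$ with $d_\Delta(p_n,\tp(\bar{a} b_1 \cdots b_n)) < \e/2$.

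The key step is then to extend this approximation to one that also handles the $(n{+}1)$st coordinate. I view $p$ and $\tp(\bar{a}b_1\cdots b_n)$ both as elements of $S_{n+|\bar{a}|}(T)$ (or its extension to one more variable) and invoke the extension property for $d_\Delta$ (which is exactly what the preceding lemma already uses) to obtain a type $q \in S_{n+1+|\bar{a}|}(T)$ extending $\tp(\bar{a} b_1\cdots b_n)$ with
\[
d_\Delta(p,q) \;=\; d_\Delta(p_n,\tp(\bar{a}b_1\cdots b_n)) \;<\; \e/2.
\]
Now $q$ is a $1$-type over $\bar{a}b_1\cdots b_n$, so the $1$-type hypothesis applied to it with tolerance $\e/2$ yields $b_{n+1} \in \frk{M}$ with $d_\Delta(q,\tp(\bar{a} b_1 \cdots b_{n+1})) < \e/2$. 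The triangle inequality then gives $d_\Delta(p,\tp(\bar{a}b_1\cdots b_{n+1})) < \e$, completing the induction.

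The only point I expect to need to check carefully is that the extension property really is available for $d_\Delta$ (not only for the topometric $\delta_\Delta$ of Proposition \ref{prop:metric-char}), but this is used explicitly in the lemma immediately preceding the statement, so it is already in hand. Beyond that the argument is just the standard ``realize one coordinate at a time, absorbing the error'' trick, with the extension property for $d_\Delta$ playing the role that quantifier elimination or compactness plays in the discrete analog.
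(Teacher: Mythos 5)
Your proof is correct and takes essentially the same approach as the paper's: both realize the type one coordinate at a time, using the extension property for $d_\Delta$ together with the $1$-type hypothesis and the triangle inequality. The only difference is bookkeeping---the paper iterates flatly with tolerance $\e/n$ at each of $n$ stages, while you recurse with tolerance $\e/2$---which changes nothing substantive.
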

\begin{proof}
The $\Rightarrow$ direction is obvious, so we only need to show that if $\frk{M}$ is $\Delta$\nobreakdash-$\omega$\nobreakdash-\hskip0pt saturated for $1$-types, then it is $\Delta$-$\omega$-saturated.

Let $\bar{a}\in \frk{M}$ be a tuple and let $p\in S_n(\bar{a})$ be some type. Pick $\e > 0$. For each $i$ with $0<i \leq n$, let $p_i$ be the restriction of $p$ to the first $i$ variables. 

First find $b_1 \in \frk{M}$ such that $d_\Delta(p_1, \mathrm{tp}(\bar{a}b_1)) < \frac{\e}{n}$.

Now at any stage $i \geq 1$, given $b_1\dots b_i$, find $q_{i+1} \in S_{1}(\bar{a}b_1\dots b_i)$ such that $d_\Delta(p_{i+1},q_{i+1}) = d_\Delta(p_i, \mathrm{tp}(\bar{a}b_1\dots b_i))$.
\end{proof}

\begin{lem} \label{lem:sat-alt}
If $\frk{M}$ is approximately $\Delta$-$\omega$-saturated, then for every tuple $\bar{a}$ and every type $p(\bar{a},\bar{y})\in S_n(\bar{a})$ and $\e > 0$ there is $\bar{b}\bar{c} \in \frk{M}$ such that $d(\bar{a},\bar{b})<\e$ and $\delta_\Delta(p,\mathrm{tp}(\bar{b}\bar{c}))<\e$. 
\end{lem}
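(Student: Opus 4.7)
The main idea is to apply the approximate $\Delta$-$\omega$-saturation hypothesis twice, first to produce a rough witness for $p$ and then to import a better $\delta_\Delta$-approximation back into $\frk{M}$. First, applying approximate $\Delta$-$\omega$-saturation to $\bar{a}$ and $p$ yields $\bar{c}'\in\frk{M}$ with $d_\Delta(p,\mathrm{tp}(\bar{a}\bar{c}'))<\delta$ for a $\delta>0$ to be chosen later. Proposition \ref{prop:dd-estimate}(ii), applied with empty parameter set, unpacks this into the existence of a correlation $R$ of distortion at most $\delta$ between a realization of $p$ and a tuple $d$-close to a realization of $\mathrm{tp}(\bar{a}\bar{c}')$. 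Absorbing everything into an elementary extension $\frk{U}\succeq\frk{M}$ realizing $\mathrm{tp}(\bar{a}\bar{c}')$ as $\bar{a}\bar{c}'$ itself, we obtain $\bar{a}''\bar{c}''\in\frk{U}$ with $d(\bar{a},\bar{a}''),d(\bar{c}',\bar{c}'')\leq \delta$ and $\delta_\Delta(\mathrm{tp}(\bar{a}''\bar{c}''),p)\leq \delta$, the latter because every $\varphi\in\Delta$ satisfies $|\varphi(\mathrm{tp}(\bar{a}''\bar{c}''))-\varphi(p)|\leq \mathrm{dis}_\Delta(R)\leq \delta$.

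The second step is to transfer this witness into $\frk{M}$. Consider the type $q=\mathrm{tp}(\bar{a}''\bar{c}''/\bar{a}\bar{c}')\in S_{n+|\bar{a}|}(\bar{a}\bar{c}')$, whose parameter set lies in $\frk{M}$; by construction $q$ records both $d(\bar{a},\bar{y})\leq \delta$ and the full $\Delta$-family of estimates $|\varphi(\bar{y}\bar{z})-\varphi(p)|\leq \delta$. Applying approximate $\Delta$-$\omega$-saturation a second time, to $q$ over the parameters $\bar{a}\bar{c}'\in\frk{M}$, yields $(\bar{b},\bar{c})\in\frk{M}$ with $d_\Delta(q,\mathrm{tp}(\bar{a}\bar{c}'\bar{b}\bar{c}))<\eta$ for a further parameter $\eta>0$ to be chosen. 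A second application of Proposition \ref{prop:dd-estimate}(ii) then produces a correlation $R'$ of distortion at most $\eta$ between a realization of $q$ (in some auxiliary model) and a tuple $d$-close to $(\bar{a},\bar{c}',\bar{b},\bar{c})$. Since $d$ is controlled by $\mathrm{dis}_\Delta$ via Lemma \ref{lem:coarsest}, the $d$-bounds propagate to give $d(\bar{a},\bar{b})\leq \delta+O(\eta)$; since $R'$ simultaneously bounds every $\Delta$-formula, the $\delta_\Delta$-bounds built into $q$ propagate to give $\delta_\Delta(\mathrm{tp}(\bar{b}\bar{c}),p)\leq \delta+O(\eta)$. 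Choosing $\delta$ and $\eta$ small enough relative to $\e$ then closes the argument.

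The hard part, and where the main technical burden lies, is verifying that the second propagation genuinely yields a $\delta_\Delta$-bound on the type realized in $\frk{M}$ itself, rather than only on some $d$-nearby type in an elementary extension. This requires exploiting that the $\delta_\Delta$-constraint is encoded \emph{uniformly} over all of $\Delta$ inside $q$, together with the defining feature of $\mathrm{dis}_\Delta$ that it uniformly bounds every $\Delta$-formula at once; by the time both pieces of the bound from $q$ have been transported across $R'$ the final estimate does not involve any further supremum over $\Delta$ that is not already under control.
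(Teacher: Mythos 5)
Your two-step plan breaks down at exactly the point you flag as the "hard part," and the breakdown is not repairable without iterating infinitely. After the second application of saturation and Proposition~\ref{prop:dd-estimate}(ii), what you actually obtain is a tuple $\bar{v}'$ (the relevant block of $\bar{d}'$) in an auxiliary model satisfying $\delta_\Delta(\mathrm{tp}(\bar{v}'),p)\leq\delta+\eta$ and $d(\bar{v}',\bar{b}\bar{c})\leq\eta$, where $\bar{b}\bar{c}$ is the tuple realized in $\frk{M}$. To conclude $\delta_\Delta(\mathrm{tp}(\bar{b}\bar{c}),p)$ is small you would need to pass the $\delta_\Delta$-estimate from $\bar{v}'$ across the $d$-small gap to $\bar{b}\bar{c}$, i.e.\ you would need $d$-closeness of tuples to imply $\delta_\Delta$-closeness of their types. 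That implication does not hold for a general distortion system: $\delta_\Delta$ refines the topology and can be much larger than $d$. The extreme case is instructive: take $\Delta$ to be all formulas, so that $d_\Delta=d$ and approximate $\Delta$-$\omega$-saturation is ordinary approximate $\omega$-saturation, while $\delta_\Delta$ is (essentially) the discrete metric on $S_{n+|\bar{a}|}(T)$. Then the lemma demands a tuple whose type \emph{equals} $p$ after a small perturbation of the parameters, and no finite number of $d_\Delta$-approximations will ever force exact realization of a type; the ``uniformity over $\Delta$'' you appeal to does not survive the final $d$-jump from $\bar{v}'$ to $\bar{b}\bar{c}$ because there is no uniform modulus relating $d$ to $\delta_\Delta$.

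This is precisely why the paper's proof does not stop after two steps: it builds sequences $\bar{b}_i\bar{c}_i$ and $q_i$ with geometrically decaying errors, shows the tuples are $d$-Cauchy and the types are $\delta_\Delta$-Cauchy, and uses $d(q_{i+1},\mathrm{tp}(\bar{b}_{i+1}\bar{c}_{i+1}))<2^{-i-1}\e$ to conclude the limit tuple $\bar{b}\bar{c}$ realizes the limit type $q$ \emph{exactly}, not merely up to small $d$-error. Once $\bar{b}\bar{c}\models q$ and $\delta_\Delta(p,q)<\e$, the conclusion follows. Your approach would be correct if $\Delta$ were u.u.c.\ (or more specifically, if $\delta_\Delta$ were uniformly dominated by $d$, as happens for the Gromov--Hausdorff system where $\Delta$-formulas are $1$-Lipschitz), but the lemma is stated for arbitrary distortion systems. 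The fix is to replace your second step with the paper's back-and-forth limiting construction.
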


\begin{proof}
Pick $\e>0$. Let $\bar{b}_{0}=\bar{a}$
and find $\bar{c}_{0}$ such that $d_{\Delta}(p,\mathrm{tp}(\bar{b}_{0}\bar{c}_{0}))<\frac{1}{2}\e$.
By Proposition  \otherpaper \ref{prop:dd-estimate} there exists a type $q_{0}$ such that
$\delta_{\Delta}(p,q_{0})<\frac{1}{2}\e$ and $d(q_{0},\allowbreak\mathrm{tp}(\bar{b}_{0}\bar{c}_{0}))<\frac{1}{2}\e$.
Let $r_{0}$ be a completion of the type $\{d(\bar{b}_{0}\bar{c}_{0},\bar{x}\bar{y})<\frac{1}{2}\e,q_{0}(\bar{x},\bar{y})\}$
(i.e.\ a type in twice as many variables).

Now at stage $i$, given $\bar{b}_{i}\bar{c}_{i}$ and $r_{i}$,
find $\bar{b}_{i+1}\bar{c}_{i+1}$ such that $$d_{\Delta}(r_{i},\allowbreak\mathrm{tp}(\bar{b}_{i}\bar{c}_{i}\bar{b}_{i+1}\bar{c}_{i+1}))<2^{-i-1}\e.$$
So in particular $d_{\Delta}(q_{i},\mathrm{tp}(\bar{b}_{i+1}\bar{c}_{i+1}))<2^{-i-1}\e$.
By Proposition \otherpaper  \ref{prop:dd-estimate} there exists a type $q_{i+1}$ such
that $\delta_{\Delta}(q_{i},q_{i+1})<2^{-i-1}\e$ and $d(q_{i+1},\allowbreak\mathrm{tp}(\bar{b}_{i+1}\bar{c}_{i+1}))<2^{-i-1}\e$.
Let $r_{i+1}$ be a completion of the type $\{d(\bar{b}_{i+1}\bar{c}_{i+1},\allowbreak\bar{x}\bar{y})<2^{-i-1}\e,q_{i+1}(\bar{x},\bar{y})\}$.

Now by construction $\{\bar{b}_{i}\bar{c}_{i}\}_{i<\omega}$
is a Cauchy sequence in $d$. Let it limit to $\bar{b}\bar{c}$.
By construction we have that $d(\bar{a},\bar{b})<\e$.
$\{q_{i}\}_{i<\omega}$ is also a Cauchy sequence in $\delta_{\Delta}$.
Let it limit to $q$. By construction we have that $\delta_{\Delta}(p,q)<\e$.
Furthermore since $d(q_{i+1},\mathrm{tp}(\bar{b}_{i+1}\bar{c}_{i+1}))<2^{-i-1}\e$,
we have that $\bar{b}\bar{c}\models q$, so in particular
$\delta_{\Delta}(p,\mathrm{tp}(\bar{b}\bar{c}))<\e$,
as required.
\end{proof}

\begin{prop} \label{prop:sat-approx}
 For any countable theory $T$ (not necessarily complete) and distortion system $\Delta$ for $T$, if $\frk{M},\frk{N}\models T$ are separable models that are both approximately $\Delta$-$\omega$-saturated, then $a_\Delta(\frk{M},\frk{N})$ is as small as possible, i.e.\ $a_\Delta(\frk{M},\allowbreak\frk{N})=\delta_\Delta(\mathrm{Th}(\frk{M}),\mathrm{Th}(\frk{N}))$. In particular if $\frk{M} \equiv \frk{N}$ then $a_\Delta(\frk{M},\frk{N})=0$.
\end{prop}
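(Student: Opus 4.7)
The plan is to prove the upper bound $a_\Delta(\frk{M},\frk{N}) \leq \eta$, where $\eta \coloneqq \delta_\Delta(\mathrm{Th}(\frk{M}),\mathrm{Th}(\frk{N}))$; the matching lower bound is essentially free, since any almost correlation $R$ trivially contains the pair of empty tuples (the condition $(\bar{m},\bar{n})\in R$ is vacuous for empty $\bar{m},\bar{n}$), so $\mathrm{dis}_\Delta(R) \geq |\varphi^\frk{M}-\varphi^\frk{N}|$ for every sentence $\varphi \in \Delta$, and hence $\mathrm{dis}_\Delta(R) \geq \eta$ by Proposition \ref{prop:metric-char}(i). For the upper bound I fix $\e > 0$ and carry out a back-and-forth construction of matched dense sequences in $\frk{M}$ and $\frk{N}$ whose types are $\delta_\Delta$-close to within $\eta + \e$.

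Concretely, I enumerate countable $d$-dense sequences $\{m_i\} \subseteq \frk{M}$ and $\{n_i\} \subseteq \frk{N}$ and recursively build tuples $\bar{a}^k \in \frk{M}^k$ and $\bar{b}^k \in \frk{N}^k$ maintaining: (i) $\delta_\Delta(\mathrm{tp}(\bar{a}^k),\mathrm{tp}(\bar{b}^k)) < \eta + \e(1 - 2^{-k})$; (ii) $d(a^k_i,a^{k-1}_i),\,d(b^k_i,b^{k-1}_i) < \e 2^{-k}$ for $i < k-1$; and (iii) the stages are interleaved so that each $m_j$ (resp.\ $n_j$) is eventually absorbed as a new coordinate, with extra stages devoted to closing the sequences under applications of function symbols on both sides. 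At a ``forth'' stage incorporating $m_j$, I set $\bar{a}^k = \bar{a}^{k-1}m_j$, apply the extension property (Proposition \ref{prop:metric-char}(v)) to produce a type $q \in S_k(T)$ extending $\mathrm{tp}(\bar{b}^{k-1})$ with $\delta_\Delta(\mathrm{tp}(\bar{a}^k),q) = \delta_\Delta(\mathrm{tp}(\bar{a}^{k-1}),\mathrm{tp}(\bar{b}^{k-1}))$, and then apply Lemma \ref{lem:sat-alt} inside the approximately $\Delta$-$\omega$-saturated model $\frk{N}$ to realize $q$ up to arbitrarily small error: this yields a perturbed tuple $\bar{b}'^{k-1}b^k_{k-1}$ with both $d(\bar{b}^{k-1},\bar{b}'^{k-1})$ and $\delta_\Delta(q,\mathrm{tp}(\bar{b}'^{k-1}b^k_{k-1}))$ below a pre-chosen tolerance. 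Triangle inequalities then recover (i) and (ii); back stages and function-symbol closure stages are symmetric.

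The main obstacle is the coupled error bookkeeping across stages, of a flavor already seen in the proof of Lemma \ref{lem:sat-alt}. At each stage I pick up three independent sources of error --- a $d$-perturbation of the existing $\bar{b}$-tuple, a residual $\delta_\Delta$-gap between $q$ and the type actually realized in $\frk{N}$, and a propagation of earlier perturbations through the moduli of uniform continuity of the $\Delta$-formulas that witness the previous instance of (i) (these moduli exist because $\delta_\Delta$ is topometric). These have to be budgeted on a sufficiently fast geometric schedule that they all fit inside the remaining slack $\e 2^{-k}$ at every stage while simultaneously keeping (i) true along the induction, which is the step that requires genuine care.

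Once the construction is complete, condition (ii) makes each coordinate sequence $(a^k_i)_k$ and $(b^k_i)_k$ $d$-Cauchy with limits $a_i \in \frk{M}$ and $b_i \in \frk{N}$, and condition (iii) ensures that $\{a_i\}$ and $\{b_i\}$ generate $d$-dense sub-pre-structures of $\frk{M}$ and $\frk{N}$. Lower semi-continuity of $\delta_\Delta$ applied on each finite restriction (Proposition \ref{prop:metric-char}(ii)), combined with monotonicity and the supremum characterization for infinite tuples (parts (iii) and (vi) of the same proposition), yields $\delta_\Delta(\mathrm{tp}(a_{<\omega}),\mathrm{tp}(b_{<\omega})) \leq \eta + \e$. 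Pairing $a_i \leftrightarrow b_i$ and matching term interpretations $t^\frk{M}(\bar{a}_I) \leftrightarrow t^\frk{N}(\bar{b}_I)$ produces an almost correlation of distortion at most $\eta + \e$, so $a_\Delta(\frk{M},\frk{N}) \leq \eta + \e$; letting $\e \to 0$ finishes the proof.
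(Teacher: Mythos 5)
Your proposal is correct and follows essentially the same route as the paper's proof: a back-and-forth construction of matched sequences driven by the extension property of $\delta_\Delta$ (Proposition \ref{prop:metric-char}(v)) and Lemma \ref{lem:sat-alt}, with geometric control of the $d$-perturbations so the coordinatewise limits exist, density preserved in the limit (the paper arranges this via tail-dense enumerations rather than your summable-perturbation bookkeeping), and lower semi-continuity of $\delta_\Delta$ to pass the distortion bound to the limit correlation. Your explicit treatment of the lower bound via sentences in $\Delta$ and of closure under function symbols matches what the paper leaves implicit.
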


\begin{proof}
Pick $\e>\delta_\Delta(\mathrm{Th}(\frk{M}),\mathrm{Th}(\frk{N}))$. Let $\{m_{2i}\}_{i<\omega}$ be an enumeration of a tail-dense (i.e.\ every final segment is dense) sequence in $\frk{M}$ and let $\{n_{2i+1}\}_{i<\omega}$ be a tail-dense sequence in $\frk{N}$. We will construct an almost correlation between $\frk{M}$ and $\frk{N}$ with a back-and-forth argument with the typical continuous modification that the sequence built will need to `slide around' a little at each stage to make things line up.

$\{a_i^j\}_{j\leq i < \omega}$ will be an array of elements of $\frk{M}$ and $\{b_i^j\}_{j\leq i < \omega}$ will be an array of elements of $\frk{N}$ chosen so that for each fixed $i$, $a_i^j$ and $b_i^j$ are Cauchy sequences in $j$. Their limits, $a_i^\omega$ and $b_i^\omega$, will be the desired correlation with distortion $\leq \e$.

On stage $0$, let $a_0^0 = m_0$ and find $b_0^0$ such that $$\delta_\Delta(\mathrm{tp}(a_0^0),\mathrm{tp}(b_0^0)) = \delta_\Delta(\mathrm{Th}(\frk{M}),\mathrm{Th}(\frk{N})) < \e.$$ %Let $p_0 = \mathrm{tp}(a_0^0)$ and $q_0 = \mathrm{tp}(b_0^0)$, so in particular $\delta_\Delta(p_0,q_0) < \e$.
On odd stage $2k+1$, let $b_{i}^{2k+1} = b_{i}^{2k}$ for $i<2k+1$ and let $b_{2k+1}^{2k+1} = n_{2k+1}$. By the induction hypothesis, $\delta_\Delta(\mathrm{tp}(a_{\leq 2k}^{2k}),\mathrm{tp}(b_{\leq 2k}^{2k})) < \e$. Let $p_{2k+1}$ be an extension of the type $\mathrm{tp}(a_{\leq 2k}^{2k})$ such that $\delta_\Delta (\mathrm{tp}(a_{\leq 2k}^{2k}),\mathrm{tp}(b_{\leq 2k}^{2k}))=\delta_\Delta(p_{2k+1},\mathrm{tp}(b_{\leq 2k+1}^{2k+1}))$. Now by Lemma \ref{lem:sat-alt} we can find $a_{\leq 2k}^{2k+1}$ and $a_{2k+1}^{2k+1}$ such that $d^\frk{M}(a_{\leq 2k}^{2k},a_{\leq 2k}^{2k+1}) < 2^{-k}$ and with $\delta_\Delta(p_{2k+1},\mathrm{tp}(a_{\leq 2k+1}^{2k+1}))$ small enough that $\delta_\Delta(\mathrm{tp}(a_{\leq 2k+1}^{2k+1}),\allowbreak \mathrm{tp}(b_{\leq 2k+1}^{2k+1})) \allowbreak < \e$.

On even stage $2k+2$ we do the same with the roles reversed.

Now clearly for each fixed $i$, $a_i^j$ and $b_i^j$ are Cauchy sequences in $j$, so let $a_i^\omega$ and $b_i^\omega$ be their limits. Note that $d^\frk{M}(a_{2k}^\omega, m_{2k}) \leq 2^{-2k+1}$ and similarly for $b_{2k+1}^\omega$ and $n_{2k+1}$, so we have that $\{a_i^\omega\}_{i<\omega}$ is dense in $\frk{M}$ and $\{b_i^\omega\}_{i<\omega}$ is dense in $\frk{N}$ by the tail-density of the sequences $\{m_{2i}\}$ and $\{n_{2i+1}\}$. So $R=\{(a_i^\omega,b_i^\omega):i<\omega\}$ is an almost correlation between $\frk{M}$ and $\frk{N}$.

By induction we have for each $j<\omega$ that $\delta_\Delta(\mathrm{tp}(a_{\leq j}^k),\mathrm{tp}(b_{\leq j}^k)) < \e$ for all $k$ such that this quantity is defined. By lower semi-continuity of $\delta_\Delta$ this implies that $\delta_\Delta(\mathrm{tp}(a_{\leq j}^\omega),\mathrm{tp}(b_{\leq j}^\omega)) \leq \e$ for all $j<\omega$. Therefore we have that $\mathrm{dis}_\Delta(R) \leq \e$ as well.

Since we can do this for any $\e>\delta_\Delta(\mathrm{Th}(\frk{M}),\mathrm{Th}(\frk{N}))$, we have that $a_\Delta(\frk{M},\allowbreak\frk{N})\leq \delta_\Delta(\mathrm{Th}(\frk{M}),\allowbreak\mathrm{Th}(\frk{N}))$.  Since $a_\Delta(\frk{M},\frk{N}) \geq \delta_\Delta(\mathrm{Th}(\frk{M}),\mathrm{Th}(\frk{N}))$  always holds, we have $a_\Delta(\frk{M},\frk{N}) = \delta_\Delta(\mathrm{Th}(\frk{M}),\mathrm{Th}(\frk{N}))$, as required.
\end{proof}

\begin{cor}
If $\Delta$ is a regular distortion system and $\frk{M},\frk{N}\models T$ are approximately $\Delta$-$\omega$-saturated separable models then $\rho_\Delta(\frk{M},\frk{N}) = \delta_\Delta(\mathrm{Th}(\frk{M}),\mathrm{Th}(\frk{N}))$. In particular if $\frk{M}\equiv\frk{N}$, then $\frk{M} \approxx_\Delta \frk{N}$.
\end{cor}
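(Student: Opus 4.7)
The plan is to combine the preceding proposition with the defining property of regularity. By Proposition \ref{prop:sat-approx} we already know that $a_\Delta(\frk{M},\frk{N}) = \delta_\Delta(\mathrm{Th}(\frk{M}),\mathrm{Th}(\frk{N}))$; call this common value $r$. Since every correlation is an almost correlation, $\rho_\Delta(\frk{M},\frk{N}) \geq a_\Delta(\frk{M},\frk{N}) = r$ automatically, so the only thing left to prove is the reverse inequality $\rho_\Delta(\frk{M},\frk{N}) \leq r$.

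To get this, I would fix an arbitrary $\eta > 0$ small enough that $r+\eta$ lies below the regularity threshold $\e$ coming from the definition of regularity. The previous proposition (or, more precisely, the argument used to prove it) produces an almost correlation $R \in \mathrm{acor}(\frk{M},\frk{N})$ with $\mathrm{dis}_\Delta(R) < r + \eta$. Applying the regularity hypothesis to $R$ with slack parameter $\eta$ then yields a genuine correlation $S \in \mathrm{cor}(\frk{M},\frk{N})$ with $S \supseteq R$ and $\mathrm{dis}_\Delta(S) \leq \mathrm{dis}_\Delta(R) + \eta < r + 2\eta$. Hence $\rho_\Delta(\frk{M},\frk{N}) \leq r + 2\eta$, and sending $\eta \downarrow 0$ gives $\rho_\Delta(\frk{M},\frk{N}) \leq r$, as required. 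The ``in particular'' clause is the special case $r = 0$: if $\frk{M} \equiv \frk{N}$ then $\delta_\Delta(\mathrm{Th}(\frk{M}),\mathrm{Th}(\frk{N})) = 0 < \e$, so the argument above applies verbatim to give $\rho_\Delta(\frk{M},\frk{N}) = 0$, i.e.\ $\frk{M} \approxx_\Delta \frk{N}$.

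The one substantive point is the interplay with the regularity threshold $\e$. As long as $r < \e$ the reduction is immediate, which covers the headline ``in particular'' case where $r = 0$, and in all of the motivating examples the regularity phenomenon is global so no threshold issue arises. The main obstacle, therefore, is really just the bookkeeping needed to ensure that $r + \eta < \e$ before invoking regularity; given that, everything reduces cleanly to Proposition \ref{prop:sat-approx} plus a single application of the definition of regularity, with no further back-and-forth construction required.
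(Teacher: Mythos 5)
Your proof is essentially the one the paper leaves implicit: the chain $r \leq a_\Delta \leq \rho_\Delta$ together with Proposition \ref{prop:sat-approx} reduces everything to getting $\rho_\Delta \leq a_\Delta$, and a single application of regularity to the almost correlation produced by Proposition \ref{prop:sat-approx} closes the gap. The ``in particular'' clause and the inequality $\rho_\Delta \geq a_\Delta = r$ are handled exactly right.

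The one place I'd push back is your characterization of the threshold issue as ``bookkeeping.'' Regularity only promises a genuine correlation extending $R$ when $\mathrm{dis}_\Delta(R) < \e$, so if $r \geq \e$ no choice of $\eta$ makes the hypothesis hold and the argument simply does not apply; there is no bookkeeping that rescues it. In that regime you would need a different idea entirely, and none is supplied here (nor, to be fair, in the paper). So your proof establishes the corollary whenever $\delta_\Delta(\mathrm{Th}(\frk{M}),\mathrm{Th}(\frk{N}))$ lies below the regularity threshold, which covers the advertised ``in particular'' case ($r=0$) and all of the motivating distortion systems (where regularity is effectively global), but not the statement in full generality for an arbitrary regular $\Delta$. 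It would be cleaner to state this as a restriction on the conclusion rather than as a loose end.
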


\begin{prop}
For any countable complete theory $T$ and distortion system $\Delta$ for $T$, if $T$ is $\Delta$-$\omega$-categorical, then every separable model of $T$ is approximately $\Delta$-$\omega$-saturated.
\end{prop}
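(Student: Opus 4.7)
The plan is to prove the contrapositive. Assume some separable $\frk{M}\models T$ fails to be approximately $\Delta$-$\omega$-saturated: fix a witnessing tuple $\bar{a}\in\frk{M}$, type $p\in S_n(\bar{a})$, and $\e > 0$ such that $d_\Delta(p,\mathrm{tp}(\bar{a}\bar{b}))\geq \e$ for every $\bar{b}\in\frk{M}$. I will construct a second separable model $\frk{N}$ that is ``approximately $p$-universal'' over images of $\bar{a}$, use $\Delta$-$\omega$-categoricity to almost-correlate $\frk{M}$ and $\frk{N}$ with small distortion, and finally transport a near-realization of $p$ back into $\frk{M}$ to contradict the failure of saturation.

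Let $p^*\in S_{n+|\bar{a}|}(T)$ denote the parameter-free type obtained from $p$ by replacing the constants $\bar{a}$ with fresh variables; it extends $\mathrm{tp}(\bar{a})$. Applying the preceding lemma to the pair $(\mathrm{tp}(\bar{a}),p^*)$ yields a separable $\frk{N}\models T$ realizing $\mathrm{tp}(\bar{a})$ such that for every $|\bar{a}|$-tuple $\bar{a}'\in\frk{N}$ and every $\eta>0$ there exists an $n$-tuple $\bar{b}'\in\frk{N}$ with $d_\Delta(p^*,\mathrm{tp}(\bar{a}'\bar{b}')) < d_\Delta(\mathrm{tp}(\bar{a}),\mathrm{tp}(\bar{a}')) + \eta$. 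By $\Delta$-$\omega$-categoricity, $a_\Delta(\frk{M},\frk{N})=0$, so for every $\delta>0$ one can fix an almost correlation $R$ between dense sub-pre-structures $\frk{M}'\subseteq\frk{M}$ and $\frk{N}'\subseteq\frk{N}$ with $\mathrm{dis}_\Delta(R)<\delta$. The two inequalities I will lean on are that $(\bar{u},\bar{v})\in R$ implies $d_\Delta(\mathrm{tp}(\bar{u}),\mathrm{tp}(\bar{v}))\leq \mathrm{dis}_\Delta(R)$, and that $d_\Delta\leq d$ on every type space (via the identity correlation).

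Choosing $\delta$ small compared to $\e$, I perform a five-step chase: first approximate $\bar{a}$ by $\bar{a}^*\in\frk{M}'$ within $d$-distance $\delta$; push $\bar{a}^*$ across $R$ to some $\bar{a}'\in\frk{N}'$; apply the extension property of $\frk{N}$ to find $\bar{b}'\in\frk{N}$ with $\mathrm{tp}(\bar{a}'\bar{b}')$ close to $p^*$ in $d_\Delta$; approximate $\bar{b}'$ by some $\bar{b}''\in\frk{N}'$ within $d$-distance $\delta$; and finally use surjectivity of $R$ to pull $\bar{b}''$ back to some $\bar{b}^*\in\frk{M}'$. Each step contributes at most $O(\delta)$ to the accumulated $d_\Delta$-error, so stringing together the triangle inequalities yields $d_\Delta(p^*,\mathrm{tp}(\bar{a}\bar{b}^*))<C\delta$ for an explicit small constant $C$. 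Taking $\delta<\e/C$ gives $d_\Delta(p,\mathrm{tp}(\bar{a}\bar{b}^*))<\e$ with $\bar{b}^*\in\frk{M}$, contradicting the choice of $\e$.

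The only mildly delicate point is that almost correlations are defined only on dense sub-pre-structures, so each use of $R$ has to be sandwiched with $d$-approximations; since $d_\Delta\leq d$ uniformly, these approximations cost only $O(\delta)$ and the accumulated error stays linear in $\delta$. Notably, no regularity or u.u.c.\ hypothesis on $\Delta$ enters the argument; the only ingredients are the extension property supplied by the preceding lemma and the definition of $\Delta$-$\omega$-categoricity in terms of $a_\Delta$.
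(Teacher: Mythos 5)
Your argument is essentially the paper's proof (the contrapositive framing is cosmetic): build $\frk{N}$ from the preceding lemma, take an almost correlation of small distortion from $\Delta$-$\omega$-categoricity, and do the same chase $\bar{a}\mapsto\bar{a}^*\mapsto\bar{a}'\mapsto\bar{b}'\mapsto\bar{b}''\mapsto\bar{b}^*$ with triangle inequalities. One small correction: the inequality $d_\Delta(\mathrm{tp}(\bar{u}),\mathrm{tp}(\bar{v}))\leq\mathrm{dis}_\Delta(R)$ is not literally true (the metric $d_\Delta$ also sees the predicates $d(x,c)$, which are only controlled via Lemma \ref{lem:coarsest}), so you should instead invoke the uniform modulus of Proposition \ref{prop:dd-estimate}~\emph{(i)} (after passing to a saturated extension of $(\frk{M},\frk{N},R)$ to make $R$ a genuine correlation), which means the accumulated error is controlled by a modulus in $\delta$ rather than being literally linear --- this still tends to $0$ and the argument goes through.
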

\begin{proof}
Fix a separable $\frk{M}\models T$. Fix $\bar{a} \in \frk{M}$ and $p(\bar{x},\bar{a})\in S_m(\bar{a})$. Let $\frk{N}$ be the model for $\mathrm{tp}(\bar{a})$ and $p$ guaranteed by the previous lemma. Pick $\gamma > 0$. Find $\delta > 0$ according to Proposition \otherpaper \ref{prop:dd-estimate} \emph{(ii)} with $\e < \frac{1}{4} \gamma$. Without loss assume that $\delta < \frac{1}{4}\gamma$. Let $R\in \mathrm{acor}(\frk{M},\frk{N})$ be closed and have $\mathrm{dis}_\Delta (R) < \delta$. Find $\bar{a}^\prime$ in the domain of $R$ such that $d^\frk{M}(\bar{a},\bar{a}^\prime) < \delta$ and let $\bar{b}\in \frk{N}$ be such that $(\bar{a}^\prime,\bar{b})\in R$. By passing to an $\aleph_1$-saturated elementary extension of $(\frk{M},\frk{N},R)$ and by using Proposition \otherpaper \ref{prop:dd-estimate} we get that $d_\Delta(\mathrm{tp}(\bar{a}),\mathrm{tp}(\bar{b})) \leq \e < \frac{1}{4}\gamma$. 

This implies that there exists $\bar{c} \in \frk{N}$ such that $d_\Delta(p,\mathrm{tp}(\bar{b}\bar{c})) < d_\Delta(\mathrm{tp}(\bar{a}),\allowbreak\mathrm{tp}(\bar{b})) + \frac{1}{4}\gamma$. Now find $\bar{c}^\prime \in \frk{N}$ in the range of $R$ such that $d^\frk{N}(\bar{c},\bar{c}^\prime) <\frac{1}{4} \gamma$. Now we have that $d_\Delta(p,\mathrm{tp}(\bar{b}\bar{c}^\prime)) < d_\Delta(p,\mathrm{tp}(\bar{b}\bar{c})) + d(\bar{c},\bar{c}^\prime)$ and thus $d_\Delta(p,\mathrm{tp}(\bar{b}\bar{c}^\prime)) < d_\Delta(p,\mathrm{tp}(\bar{b}\bar{c})) + \frac{1}{4}\gamma$. Putting this all together gives that $d_\Delta(p,\mathrm{tp}(\bar{b}\bar{c}^\prime)) < \frac{3}{4}\gamma$. 

Now find $\bar{e} \in \frk{M}$ such that $(\bar{e},\bar{c}^\prime) \in R$, so in particular we have that $(\bar{a}^\prime \bar{e},\bar{b}\bar{c}^\prime) \in R$. By passing to an $\aleph_1$-saturated elementary extension of $(\frk{M},\frk{N},R)$ and using Proposition \otherpaper \ref{prop:dd-estimate} again, we have that $d_\Delta(\mathrm{tp}(\bar{a}\bar{e}),\mathrm{tp}\bar{b}\bar{c}^\prime) \leq \e < \frac{1}{4}\gamma$. By the triangle inequality this gives that $d_\Delta(p,\mathrm{tp}(\bar{a}\bar{e})) < \frac{1}{4}\gamma + \frac{3}{4}\gamma = \gamma$.

Since we can do this for any separable $\frk{M}\models T$, any $\bar{a}$, any $p\in S_m(\bar{a})$, and any $\gamma >0$, we have that every separable models of $T$ is approximately $\Delta$-$\omega$-saturated.
\end{proof}

\begin{cor}
A countable theory $T$ with distortion system $\Delta$ is $\Delta$-$\omega$-\hskip0pt categorical if and only if every separable model is approximately $\Delta$-$\omega$-saturated.
\end{cor}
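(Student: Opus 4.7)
This corollary is a direct combination of the two immediately preceding propositions, so the proof is essentially an assembly.

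For the forward direction, I would simply cite the preceding proposition: if $T$ is $\Delta$-$\omega$-categorical, then every separable model of $T$ is approximately $\Delta$-$\omega$-saturated. There is nothing to add.

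For the reverse direction, suppose every separable model of $T$ is approximately $\Delta$-$\omega$-saturated, and fix two separable models $\frk{M},\frk{N}\models T$. Since $T$ is complete, $\mathrm{Th}(\frk{M})=\mathrm{Th}(\frk{N})=T$, so in particular $\delta_\Delta(\mathrm{Th}(\frk{M}),\mathrm{Th}(\frk{N}))=0$. Proposition \ref{prop:sat-approx} then yields $a_\Delta(\frk{M},\frk{N})=0$, which is exactly the definition of $\Delta$-$\omega$-categoricity.

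No real obstacle arises: both halves are immediate from results already proved. The only mild care needed is to note that completeness of $T$ is what guarantees $\mathrm{Th}(\frk{M})=\mathrm{Th}(\frk{N})$, so that the ``in particular'' clause of Proposition \ref{prop:sat-approx} applies.
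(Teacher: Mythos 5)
Your proof is correct and takes exactly the route the paper intends: the corollary is stated without proof precisely because it is the assembly of the preceding proposition (the forward direction) with Proposition \ref{prop:sat-approx} (for the reverse). Your remark that completeness of $T$ is what makes $\mathrm{Th}(\frk{M})=\mathrm{Th}(\frk{N})$ and hence activates the ``in particular'' clause is a worthwhile clarification, since the corollary's statement omits the word ``complete'' even though, given the surrounding results, it is clearly implicit.
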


\begin{prop} \label{prop:omit-types}
Fix a countable complete theory $T$ and a distortion system $\Delta$ for $T$. For any  parameters $\bar{a}$ and a type $p(\bar{x},\bar{a})\in S_n(\bar{a})$ the following are equivalent:
\begin{itemize}
    \item For every $\frk{M} \models T$ containing $\bar{b}\equiv \bar{a}$ and for every $\e > 0$, there is $\bar{c} \in \frk{M}$ such that $d_\Delta(p(\bar{x},\bar{b}),\mathrm{tp}(\bar{c}\bar{b})) < \e$.
    \item $p$ is weakly $d_\Delta$-atomic-in-$S_n(\bar{a})$.
\end{itemize}
\end{prop}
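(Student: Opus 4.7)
The two directions correspond, respectively, to a direct argument exploiting $\mathrm{tp}(\bar{a}) = \mathrm{tp}(\bar{b})$ and to a continuous-logic omitting-types argument. Throughout we use the canonical $d_\Delta$-isometry $S_n(\bar{a}) \cong S_n(\bar{b})$ induced by $\mathrm{tp}(\bar{a}) = \mathrm{tp}(\bar{b})$, which holds because on either space $d_\Delta$ is the restriction of $d_\Delta$ on $S_{n+|\bar{a}|}(T)$.

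For the direction weakly $d_\Delta$-atomic $\Rightarrow$ realization, fix $\e > 0$ and some $\e^\prime \in (0, \e)$. Weak atomicity supplies a non-empty open $U \subseteq S_n(\bar{a})$ with $d_\Delta(p, q) \leq \e^\prime$ for all $q \in U$; inside $U$ choose a basic open $[\varphi(\bar{x},\bar{a}) < r]$. Since it is non-empty, $\inf_{\bar{x}}\varphi(\bar{x},\bar{a}) < r$, and as this value is determined by $\mathrm{tp}(\bar{a}) = \mathrm{tp}(\bar{b})$, the same inequality holds at $\bar{b}$. In any $\frk{M} \ni \bar{b}$ we can then find $\bar{c} \in \frk{M}$ with $\varphi(\bar{c},\bar{b}) < r$; the resulting type $\mathrm{tp}(\bar{c}\bar{b})$ lies in the transfer of $U$ to $S_n(\bar{b})$, giving $d_\Delta(p(\bar{x},\bar{b}),\mathrm{tp}(\bar{c}\bar{b})) \leq \e^\prime < \e$.

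For the converse I argue the contrapositive: assume there is $\e_0 > 0$ such that $B := B^{d_\Delta}_{\leq \e_0}(p)$ has empty interior in $S_n(\bar{a})$. By Proposition \ref{prop:metric-char}(ii), $d_\Delta$ is lower semi-continuous, so $B$ is closed and thus nowhere dense; let $B^\prime \subseteq S_n(\bar{b})$ be its transfer. I then run a Henkin construction for the countable theory $T \cup \mathrm{tp}(\bar{b})$ with Henkin constants $\{\bar{d}_k\}_{k<\omega}$, at each stage treating one length-$n$ Henkin tuple $\bar{d}$ and forcing $\mathrm{tp}(\bar{d}/\bar{b}) \notin B^\prime$. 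The current partial theory restricts $\mathrm{tp}(\bar{d}/\bar{b})$ to some non-empty open $V \subseteq S_n(\bar{b})$; empty interior of $B^\prime$ gives $q^\ast \in V \setminus B^\prime$, and compactness plus complete regularity of $S_n(\bar{b})$ furnishes a formula $\psi$ with $\psi(q^\ast) < 0$ and $\psi \geq 0$ on $B^\prime$ (recall that by the conventions of this paper every continuous function on a type space is a formula). Adding the condition $\psi(\bar{d},\bar{b}) < 0$ stays consistent and forces $\mathrm{tp}(\bar{d}/\bar{b}) \notin B^\prime$. The resulting separable $\frk{M} \models T$ contains $\bar{b} \equiv \bar{a}$ and satisfies $d_\Delta(p(\bar{x},\bar{b}), \mathrm{tp}(\bar{c}\bar{b})) > \e_0$ for every $\bar{c} \in \frk{M}$, violating the first condition at $\e = \e_0$.

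The main non-routine step is matching the omitting target, the closed set $B^\prime$, with the standard one-formula-at-a-time omitting-types framework, since $B^\prime$ is not a priori given by a single formula condition; the use of complete regularity together with the lower semi-continuity from Proposition \ref{prop:metric-char}(ii) is what accomplishes this reduction.
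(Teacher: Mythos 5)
Your proof follows essentially the same route as the paper: the easy direction uses the fact that non-empty open sets in type space are realized, and the hard direction is the contrapositive via omitting the closed nowhere-dense ball $B^{d_\Delta}_{\le\e_0}(p)$ with a Henkin/omitting-types construction. Your treatment of the Henkin step is a bit more explicit than the paper's, which simply asserts one can build a pre-model omitting the set.

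There is one small oversight worth flagging. The Henkin construction yields a \emph{pre}-model, and a model of $T$ in continuous logic is its metric completion. The completion can realize new types that are $d$-limits (hence $d_\Delta$-limits, since $d\ge d_\Delta$) of types realized in the pre-model; such limit types may sit exactly on the boundary of $B^{d_\Delta}_{\le\e_0}(p)$, i.e.\ at $d_\Delta$-distance exactly $\e_0$ from $p$. So the conclusion ``$d_\Delta(p(\bar{x},\bar{b}),\mathrm{tp}(\bar{c}\bar{b}))>\e_0$ for every $\bar{c}\in\frk{M}$'' should read ``$\ge\e_0$.'' This is still enough to contradict the first bullet at $\e=\e_0$, so the argument survives, but the paper addresses this point explicitly (passing to $\ol{\frk{M}_0}$ and using $d\ge d_\Delta$) and you should too.
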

\begin{proof}
The second bullet point clearly implies the first bullet, since if $p$ is weakly $d_\Delta$-atomic-in-$S_n(\bar{a})$, then for every $\e>0$, $\mathrm{int}_{S_n(\bar{a})}B^{d_\Delta}_{ \leq \e}(p)$ is non-empty and non-empty open subsets of type space are always realized.

So assume that the second bullet point fails. This implies that there is an $\e > 0$ such that $\mathrm{int}_{S_n(\bar{a})}B^{d_\Delta}_{ \leq \e}(p)= \varnothing$. $\mathrm{int}_{S_n(\bar{a})}B^{d_\Delta}_{ \leq \e}(p)$ is a closed set, so we can build a pre-model $\frk{M}_0\ni \bar{a}$ omitting it. Passing to the completion $\frk{M} = \ol{\frk{M}_0}$, note that any type realized in $\frk{M}$ must be in the metric closure under the ordinary $d$-metric of the set of types realized in $\frk{M}_0$, but since $d \geq d_\Delta$ this implies that they're in the metric closure under $d_\Delta$ of the set of types realized in $\frk{M}_0$, so we have that any type $q\in S_n(\bar{a})$ realized in $\frk{M}$ must have $d_\Delta(p,q) \geq \e$, contradicting the first  bullet point. 
\end{proof}

%\begin{cor}
%For a countable complete theory $T$ and a u.u.c.\ distortion system $\Delta$ for $T$. For any  parameters $\bar{a}$ and a type $p(\bar{x},\bar{a})\in S_n(\bar{a})$ the following are equivalent:
%\begin{itemize}
 %   \item For every $\frk{M} \models T$ containing $\bar{b}\equiv \bar{a}$ and for every $\e > 0$ there is $\bar{c} \in \frk{M}$ such that $\delta_\Delta(p(\bar{x},\bar{b}),\mathrm{tp}(\bar{c}\bar{b})) < \e$.
  %  \item $p$ is weakly $\delta_\Delta$-atomic-in-$S_n(\bar{a})$.
%\end{itemize}
%\end{cor}
%\begin{proof}
%This just follows from the fact that for u.u.c.\ $\Delta$, $d_\Delta$ and $\delta_\Delta$ are uniformly equivalent.
%\end{proof}

\begin{thm}\label{thm:w-cat-main}
For any countable complete theory $T$ and distortion system $\Delta$ for $T$,

\begin{itemize}

\item[(i)] $T$ is $\Delta$-$\omega$-categorical if and only if for every tuple of parameters $\bar{a}$ and every $n<\omega$, every $p\in S_n(\bar{a})$ is weakly $d_\Delta$-atomic-in-$S_n(\bar{a})$.
\item[(ii)] Same as the previous statement but only considering types in $S_1(\bar{a})$.
\item[(iii)] If every $S_n(T)$ is metrically compact relative to $d_\Delta$, then $T$ is $\Delta$-$\omega$-\hskip0pt categorical.
\end{itemize}
\end{thm}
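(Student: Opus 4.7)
The strategy is to chain together the characterization stated just after Proposition \ref{prop:sat-approx} --- that $T$ is $\Delta$-$\omega$-categorical if and only if every separable $\frk{M}\models T$ is approximately $\Delta$-$\omega$-saturated --- with Proposition \ref{prop:omit-types}, which reexpresses approximate $\Delta$-$\omega$-saturation at a single type as the weakly $d_\Delta$-atomic condition on that type. All three items then reduce to bookkeeping around these two facts.

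For the forward direction of (i), assume $T$ is $\Delta$-$\omega$-categorical and fix $\bar{a}$, $n$, and $p\in S_n(\bar{a})$. To verify the first bullet of Proposition \ref{prop:omit-types}, take any $\frk{M}\models T$ containing some $\bar{b}\equiv\bar{a}$ and any $\e>0$, then use downward L\"owenheim--Skolem to pick a separable $\frk{M}_0\preceq\frk{M}$ with $\bar{b}\in\frk{M}_0$. The corollary above guarantees $\frk{M}_0$ is approximately $\Delta$-$\omega$-saturated, so it produces $\bar{c}\in\frk{M}_0\subseteq\frk{M}$ with $d_\Delta(p(\bar{x},\bar{b}),\mathrm{tp}(\bar{c}\bar{b}))<\e$. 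Proposition \ref{prop:omit-types} then gives that $p$ is weakly $d_\Delta$-atomic-in-$S_n(\bar{a})$. Conversely, if every such $p$ is weakly $d_\Delta$-atomic-in-$S_n(\bar{a})$, then applying the first bullet of Proposition \ref{prop:omit-types} to any given separable $\frk{M}\models T$ with $\bar{b}=\bar{a}$ shows directly that $\frk{M}$ is approximately $\Delta$-$\omega$-saturated, and Proposition \ref{prop:sat-approx} yields $\Delta$-$\omega$-categoricity.

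For (ii), the forward direction is trivial. For the backward direction, the same argument restricted to $1$-types shows every separable $\frk{M}\models T$ is approximately $\Delta$-$\omega$-saturated for $1$-types; Proposition \ref{prop:one-enough} upgrades this to full approximate $\Delta$-$\omega$-saturation, and we conclude as before.

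For (iii), suppose every $(S_n(T), d_\Delta)$ is metrically compact. Since $d_\Delta$ is lower semi-continuous and refines the type-space topology $\tau$, and topometric spaces are Hausdorff, the identity map $(S_n(T), d_\Delta)\to (S_n(T), \tau)$ is a continuous bijection from a compact space to a Hausdorff space, hence a homeomorphism; so $d_\Delta$ and $\tau$ coincide on $S_n(T)$. In particular every open $d_\Delta$-ball is topologically open, making every $q\in S_n(T)$ $d_\Delta$-atomic, and this property passes to the closed subspace $S_n(\bar{a})\subseteq S_{n+|\bar{a}|}(T)$. Thus every $p\in S_n(\bar{a})$ is (weakly) $d_\Delta$-atomic-in-$S_n(\bar{a})$, and (i) gives $\Delta$-$\omega$-categoricity. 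The only delicate step is the bookkeeping in (i)$\Rightarrow$, where the fixed type $p$ over $\bar{a}$ must be transferred along the conjugacy $\bar{a}\equiv\bar{b}$ into a separable elementary submodel before the approximate saturation theorem can be applied; everything else is a direct appeal to previously established results.
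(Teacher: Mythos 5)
Your proof is correct and follows essentially the same route as the paper: it combines the corollary that $\Delta$-$\omega$-categoricity is equivalent to approximate $\Delta$-$\omega$-saturation of all separable models with Proposition \ref{prop:omit-types}. The only cosmetic differences are that in (ii) you invoke Proposition \ref{prop:one-enough} where the paper instead observes that the proof of Proposition \ref{prop:sat-approx} only needs $1$-types, and in (iii) you spell out the compact-to-Hausdorff homeomorphism argument that the paper leaves implicit.
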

\begin{proof}
\textit{(i)}: If $T$ is $\Delta$-$\omega$-categorical then for any finite tuple $\bar{a}$, any type $p(\bar{x},\bar{a})$, and any separable model $\frk{M} \ni \bar{a}$, the condition in the first bullet point of Proposition \ref{prop:omit-types} holds (since $\frk{M}$ is approximately $\Delta$-$\omega$-saturated), i.e.\ the type must be `approximately realized.' Therefore $p$ must be weakly $d_\Delta$\nobreakdash-atomic\nobreakdash-in\nobreakdash-$S_n(\bar{a})$.

Conversely, if for every $\bar{a}$ and $p\in S_n(\bar{a})$, $p$ is weakly $d_\Delta$-atomic-in-$S_n(\bar{a})$, then by Proposition \ref{prop:omit-types} no type over a tuple of parameters can be `approximately omitted,' i.e.\ the first bullet point in Proposition \ref{prop:omit-types} always holds. This implies that every separable model of $T$ is approximately $\Delta$-$\omega$-saturated, therefore $T$ is $\Delta$-$\omega$-categorical.

\textit{(ii):} Clearly if every $p \in S_n(\bar{a})$ is weakly $d_\Delta$-atomic-in-$S_n(\bar{a})$, then the same is true restricting $n$ to $1$.

So assume that for every finite tuple of parameters $\bar{a}$, every $p\in S_1(\bar{a})$ is weakly $d_\Delta$-atomic-in-$S_1(\bar{a})$. It's clear that the proof of Proposition \ref{prop:sat-approx} only requires approximate $\Delta$-$\omega$-saturation for $1$-types, so we get that for any two separable models $\frk{M},\frk{N}\models t$, $a_\Delta(\frk{M},\frk{N})=0$, i.e.\ $T$ is $\Delta$-$\omega$-categorical.

\textit{(iii):} This is enough to imply that every $S_n(\bar{a})$ is metrically compact with regards to $d_\Delta$ as well (since $d_\Delta$ is just the restriction to $S_n(\bar{a})$ as a subspace of $S_{n+|\bar{a}|}(T)$). Therefore every $p\in S_n(\bar{a})$ is $d_\Delta$-atomic, and therefore in particular weakly $d_\Delta$-atomic.
\end{proof}

\begin{cor}
If $T$ is a countable theory, $\Delta$ is a distortion system for $T$, $\bar{a}$ is any tuple of parameters, and $T$ is $\Delta$-$\omega$-categorical, then $T_{\bar{a}}$ is $D(\Delta,\bar{a})$\nobreakdash-$\omega$\nobreakdash-\hskip0pt categorical, i.e.\ for any two separable models $\frk{M},\frk{N}\models T_{\bar{a}}$ and $\e>0$ there is an almost correlation $R$ between $\frk{M}$ and $\frk{N}$ such that $\mathrm{dis}_\Delta(R) < \e$ and for all $(\bar{m},\bar{n}) \in R$, $|d^\frk{M}(\bar{m},\bar{a}^\frk{M})-d^\frk{N}(\bar{n},\bar{a}^\frk{N})|<\e$.
\end{cor}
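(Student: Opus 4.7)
The plan is to reduce to Proposition \ref{prop:sat-approx} by running its back-and-forth construction with $\bar a^{\frk M}$ and $\bar a^{\frk N}$ paired from the start. By the preceding results, every separable model of $T$ is approximately $\Delta$-$\omega$-saturated, so the machinery of that proposition applies.

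Fix $\e > 0$ and separable $\frk{M},\frk{N}\models T_{\bar a}$, viewed as models of $T$ with distinguished tuples $\bar a^{\frk M},\bar a^{\frk N}$. Since $T_{\bar a}$ is complete, these interpretations realize the same $T$-type, so $\delta_\Delta(\tp(\bar a^{\frk M}),\tp(\bar a^{\frk N}))=0$. I enumerate tail-dense sequences in $\frk{M}$ and $\frk{N}$ whose initial entries are the coordinates of $\bar a^{\frk M}$ and $\bar a^{\frk N}$, and begin the back-and-forth of Proposition \ref{prop:sat-approx} with $(a^0_i, b^0_i) = (\bar a^{\frk M}_i, \bar a^{\frk N}_i)$ for each $i < |\bar a|$; this is legitimate precisely because $\delta_\Delta$ at stage $0$ vanishes. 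At each later stage the shift bound $2^{-k}$ coming from Lemma \ref{lem:sat-alt} is tightened to $2^{-k}\delta$ for a small parameter $\delta > 0$ to be chosen, so that the cumulative displacement of each $\bar a$-coordinate across all stages is bounded by $2\delta$. This produces an almost correlation $R$ with $\mathrm{dis}_\Delta(R) < \e'$ for any preassigned $\e'>0$, containing a pair $(\bar a^{\frk M,\omega},\bar a^{\frk N,\omega})$ with $d^{\frk M}(\bar a^{\frk M},\bar a^{\frk M,\omega})$ and $d^{\frk N}(\bar a^{\frk N},\bar a^{\frk N,\omega})$ both at most $2\delta$.

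For any $(\bar m,\bar n)\in R$, the pair $(\bar a^{\frk M,\omega},\bar a^{\frk N,\omega})$ being in $R$ as well gives $\delta_\Delta(\tp(\bar m \bar a^{\frk M,\omega}),\tp(\bar n \bar a^{\frk N,\omega}))\leq \mathrm{dis}_\Delta(R)<\e'$. Applying Lemma \ref{lem:coarsest} to the predicate $d$, choosing $\e'$ small enough in terms of $\e$ yields $|d^{\frk M}(\bar m,\bar a^{\frk M,\omega})-d^{\frk N}(\bar n,\bar a^{\frk N,\omega})|<\e/2$; two triangle-inequality steps then absorb the $2\delta$ shifts on each side, so with $\delta<\e/8$ we obtain $|d^{\frk M}(\bar m,\bar a^{\frk M})-d^{\frk N}(\bar n,\bar a^{\frk N})|<\e$, which is the required conclusion. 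The main subtlety to verify is that the passage from null-sequence shifts to summable shifts does not disturb the Cauchy or lower semicontinuity features of the original construction; this is a harmless strengthening whose only role is to pin down the images of $\bar a$ in the limit to within arbitrary precision.
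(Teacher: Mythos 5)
Your proof is correct, but it takes a genuinely different and considerably more hands-on route than the paper's. The paper disposes of this corollary almost definitionally: unwinding the definitions, the metric $d_{D(\Delta,\bar a)}$ computed for $T_{\bar a}$ on $S_n(\bar a\bar b)$ is literally the same as $d_\Delta$ on $S_n(\bar a\bar b)$ computed for $T$, so the weak-atomicity characterization of Theorem \ref{thm:w-cat-main} transfers verbatim and $T_{\bar a}$ is $D(\Delta,\bar a)$-$\omega$-categorical; the explicit ``i.e.''\ clause then falls out of $\mathrm{dis}_{D(\Delta,\bar a)}(R)=\mathrm{dis}_{D_0(\Delta,\bar a)}(R)=\mathrm{dis}_\Delta(R)\uparrow\sup_{(m,n)\in R,\,i}|d^{\frk M}(m,a_i^{\frk M})-d^{\frk N}(n,a_i^{\frk N})|$. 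You instead re-run the back-and-forth of Proposition \ref{prop:sat-approx} with the pair $(\bar a^{\frk M},\bar a^{\frk N})$ seeded at stage $0$ (legitimate, since completeness of $T_{\bar a}$ gives $\delta_\Delta(\tp(\bar a^{\frk M}),\tp(\bar a^{\frk N}))=0$) and with the sliding bounds tightened to be summable, then clean up with Lemma \ref{lem:coarsest} applied to the metric predicate and two triangle inequalities. The steps all check out: Lemma \ref{lem:sat-alt} indeed lets you make each slide as small as you like, the limit pair $(\bar a^{\frk M,\omega},\bar a^{\frk N,\omega})$ lands in $R$ within $2\delta$ of the named interpretations, and tail-density of the enumerations is unaffected by prepending $\bar a$. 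What your approach buys is a self-contained argument that avoids having to identify the two metrics $d_{D(\Delta,\bar a)}$ and $d_\Delta$; what it costs is re-proving a special case of machinery the paper already has in place. Two minor points to tidy: the limit stage only gives $\mathrm{dis}_\Delta(R)\le\e'$ by lower semi-continuity of $\delta_\Delta$, so the stage-by-stage bound should be taken strictly below $\e'$ (exactly as in the paper's proof of Proposition \ref{prop:sat-approx}); and when you invoke Lemma \ref{lem:coarsest} for tuples you should note that $\delta_\Delta(\tp(\bar m\bar a^{\frk M,\omega}),\tp(\bar n\bar a^{\frk N,\omega}))<\e'$ yields realizations of these types with $\rho_\Delta<\e'$, which is the hypothesis the lemma actually uses.
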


\begin{proof}
Unpacking definitions gives that $d_{D(\Delta,\bar{a})}$ in $S_n(\bar{a} \bar{b})$ is the same thing as $d_\Delta$ in $S_n(\bar{a}\bar{b})$. So we clearly have that $T_{\bar{a}}$ is $D(\Delta,\bar{a})$-$\omega$-categorical. All we need to do is verify that the alternative statement of $D(\Delta,\bar{a})$-$\omega$-categoricity is equivalent, but this follows from the definition of $D_0(\Delta,\bar{a})$.
%This follows from the fact that $\delta_\Delta^{n+|\bar{a}|}$ restricted to $S_n(\bar{a})$ as a subspace of $S_{n+|\bar{a}|}(T)$ is the same thing as $\delta_{\Delta(\bar{a})}^n$, implying that $d_\Delta$ restricted to $S_n(\bar{a})$ as a subspace of $S_{n+|\bar{a}|}(T)$ is uniformly equivalent to $d_{\Delta(\bar{a})}$.
\end{proof}

Note that in particular if $\Delta$ is the collection of all formulas, so that $\Delta$\nobreakdash-$\omega$\nobreakdash-\hskip0pt categoricity corresponds to $\omega$-categoricity, then $D(\Delta,\bar{a})$-$\omega$-categoricity is not the same thing as $\omega$-categoricity for $T_{\bar{a}}$. Rather, it says that given any two separable models $\frk{M},\frk{N}\models T_{\bar{a}}$, for any $\e>0$ there is an isomorphism $f:\frk{M}\cong \frk{N}$ such that $d(f(\bar{a}^\frk{M}),\bar{a}^\frk{N}) < \e$. Similar weakenings occur when $\Delta$ is a distortion system with an `obvious' extension to $T_{\bar{a}}$, although in some cases, such as with the Gromov-Hausdorff distance, the obvious extension is equivalent to $D(\Delta,\bar{a})$ (in particular because $\delta_\Delta$ and $d_{\Delta}$ are uniformly equivalent).

%We should pause to unpack what precisely it means for $T_{\bar{a}}$ to be $D(\Delta,\bar{a})$-$\omega$-categorical. 

%Note that in general $d_\Delta$ restricted to $S_n(\bar{a})$ and $d_{\Delta(a)}$ on $S_n(\bar{a})$ are not going to be the same thing, most notably with the ordinary $d$-metric.
Recall the definition of the `elementary Gromov-Hausdorff distance,' given in \cite{Hansona}.
\begin{defn}
For any $\Lcal$-formula $\varphi$, let $\chi_\varphi(\bar{x}) = \inf_{\bar{y}}\varphi(\bar{y})+d(\bar{x},\bar{y})$.  $\chi_\varphi$ has the property that it is $1$-Lipschitz in any $\mathcal{L}$-structure and furthermore that if $\varphi$ is $1$-Lipschitz in every model of $T$, then $T\models \chi_\varphi = \varphi$. Let $\mathrm{eGHK}_0 =\{\chi_\varphi : \varphi \in \Lcal\}$. Note that $\varphi \in \mathrm{eGHK}_0$ for any sentence $\varphi$.

Let $\mathrm{eGHK}= \ol{\mathrm{eGHK}_0}$. $\rho_{\mathrm{eGHK}}(\frk{M},\frk{N})$ is the \emph{elementary Gromov-Hausdorff-Kadets distance between $\frk{M}$ and $\frk{N}$}.
\end{defn}

\begin{cor}
If a countable theory $T$ is $\mathrm{eGHK}$-$\omega$-categorical, then it is $\omega$\nobreakdash-\hskip0pt categorical.
\end{cor}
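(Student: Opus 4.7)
The plan is to apply Theorem \ref{thm:w-cat-main}(ii) in both directions: by hypothesis and the $\mathrm{eGHK}$ case of the theorem, every $p \in S_1(\bar{a})$ is weakly $d_{\mathrm{eGHK}}$-atomic-in-$S_1(\bar{a})$ for every finite tuple of parameters $\bar{a}$, while ordinary $\omega$-categoricity of $T$ is, via the theorem applied with $\Delta$ the distortion system of all formulas (for which $d_{\Delta}$ coincides with the ordinary $d$-metric on types), the statement that every such $p$ is weakly $d$-atomic-in-$S_1(\bar{a})$. Since $d_{\mathrm{eGHK}} \leq d$ always holds, the corollary reduces to showing that $d$ and $d_{\mathrm{eGHK}}$ are uniformly equivalent on each $S_n(\bar{a})$.

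The first ingredient is that $\mathrm{eGHK}$ is u.u.c.: every $\chi_\varphi \in \mathrm{eGHK}$ is $1$-Lipschitz, so thickening an almost correlation $R$ by $\delta$ on both sides changes each $|\chi_\varphi^\frk{M}(\bar{m}) - \chi_\varphi^\frk{N}(\bar{n})|$ by at most $2\delta$, whence $\mathrm{dis}_{\mathrm{eGHK}}(R^{<\delta}) \leq \mathrm{dis}_{\mathrm{eGHK}}(R) + 2\delta$. By Corollary \ref{cor:dd-unif-eq} and the remark that follows it, $d_{\mathrm{eGHK}}$ is uniformly equivalent to $\delta_{\mathrm{eGHK}}$ on every $S_n(T)$ and hence on every $S_n(\bar{a})$. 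So it suffices to show that $\delta_{\mathrm{eGHK}}$ uniformly dominates $d$ on each $S_n(T)$; the reverse direction is immediate from $1$-Lipschitzness. By Proposition \ref{prop:metric-char}(i) this amounts to producing, for each pair $p,q \in S_n(T)$ with $d(p,q) > D$ and each $\delta > 0$, a formula $\chi_\varphi \in \mathrm{eGHK}$ with $|\chi_\varphi(p) - \chi_\varphi(q)| > D - \delta$.

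For this separating formula I take $\varphi = M\psi$ where $\psi(\bar{y})$ is a formula vanishing on $p$ and bounded below by $1$ outside a topological neighborhood of $p$, and $M$ is taken large. Then $\chi_{M\psi}(\bar{a}) = 0$ at any realization $\bar{a} \models p$, while for $\bar{b} \models q$ the value $\chi_{M\psi}(\bar{b}) = \inf_{\bar{u}}\bigl(M\psi(\bar{u}) + d(\bar{b},\bar{u})\bigr)$ is, for $M$ large, effectively attained near $\{\psi \approx 0\}$ and is thus approximately the $d$-distance from $\bar{b}$ to the chosen neighborhood. The main obstacle is to arrange that this neighborhood has small $d$-extent, so that its distance to $\bar{b}$ is close to $d(p,q)$; this is automatic if $p$ is $d$-isolated in the logic topology, and in general is the point where one invokes the weak $d_{\mathrm{eGHK}}$-atomicity hypothesis (coming from $\mathrm{eGHK}$-$\omega$-categoricity) together with compactness of type space to select $\psi$ with the requisite metric-topological behavior around $p$.
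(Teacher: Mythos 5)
Your overall strategy --- reduce everything to a uniform comparison of $d$ and $d_{\mathrm{eGHK}}$ on type spaces and then transfer weak atomicity through Theorem \ref{thm:w-cat-main} --- is workable, and your verification that $\mathrm{eGHK}$ is u.u.c.\ (hence that $d_{\mathrm{eGHK}}$ and $\delta_{\mathrm{eGHK}}$ are uniformly equivalent via Corollary \ref{cor:dd-unif-eq}) is fine. The problem is your final step, which is where all the content lives. To get $\delta_{\mathrm{eGHK}}$ to dominate $d$ you take $\psi\geq 0$ vanishing at $p$ and bounded below by $1$ off a topological neighborhood $U$ of $p$, and your lower bound for $\chi_{M\psi}(q)$ is essentially the $d$-distance from $q$ to $U$; for that to approach $d(p,q)$ you need $U\subseteq B^{d}_{\leq\varepsilon}(p)$, which is precisely the statement that $p$ is $d$-atomic --- essentially the conclusion you are trying to prove, so the argument is circular as written. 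The patch you propose does not close the gap: weak $d_{\mathrm{eGHK}}$-atomicity of $p$ only says that $B^{d_{\mathrm{eGHK}}}_{\leq\varepsilon}(p)$ has \emph{some} interior point, and gives no open neighborhood \emph{of $p$} of small metric extent.

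The correct fix is to choose $\psi$ depending on both $p$ and $q$, and it needs no categoricity hypothesis at all (the paper's proof simply quotes $\delta_{\mathrm{eGHK}}=d$ as an unconditional identity). If $d(p,q)>D$, then $p(\bar y)\cup q(\bar x)\cup\{d(\bar x,\bar y)\leq D\}$ is inconsistent, so by compactness there are a nonnegative formula $\psi$ with $\psi(p)=0$ and an $\varepsilon>0$ such that, in the monster model, every $\bar y$ with $\psi(\bar y)\leq\varepsilon$ has $d(\bar b,\bar y)>D$ for every $\bar b\models q$. Then $\chi_{M\psi}(p)=0$ while $\chi_{M\psi}(q)\geq\min(M\varepsilon,D)=D$ once $M\geq D/\varepsilon$: for $\bar y$ with $\psi(\bar y)>\varepsilon$ the term $M\psi(\bar y)$ already exceeds $D$, and otherwise the term $d(\bar b,\bar y)$ does. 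No control on the $d$-extent of $\cset{\psi\leq\varepsilon}$ is required. For comparison, the paper's own proof is much shorter and stays entirely parameter-free: it quotes $\delta_{\mathrm{eGHK}}=d$, uses that a $\varnothing$-type is weakly $d$-atomic if and only if it is $d$-atomic, and concludes that each $(S_n(T),d)$ is metrically compact, hence that $T$ is $\omega$-categorical. Your detour through parameter sets and the u.u.c.\ machinery is more than is needed, though it would also succeed once the separation step above is repaired.
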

\begin{proof}
$\delta_{\mathrm{eGHK}} = d$ and for any type $p\in S_n(T)$ (it is important that there are no parameters), $p$ is weakly $d$-atomic if and only if it is $d$-atomic. Every type in $S_n(T)$ being $d$-atomic is equivalent to $S_n(T)$ being metrically compact, so it follows that $T$ is $\omega$-categorical.
\end{proof}

It may seem that there is a contradiction here, given the knowledge that ordinary $\omega$-categoricity isn't always preserved under adding constants. This stems from the fact that given a theory $T$ and the corresponding $\mathrm{eGHK}(T)$, $\mathrm{eGHK}(T)(\bar{a})$ is not the same thing as $\mathrm{eGHK}(T_{\bar{a}})$ and the difference is related to what happens with $\omega$\nobreakdash-categorical theories that fail to be $\omega$-categorical after adding constants. Witnesses for $\rho_{\mathrm{eGHK}(T)(\bar{a})}(\frk{M},\frk{N}) < \e$ for $\frk{M},\frk{N}\models T_{\bar{a}}$ are elementary embeddings $f:\frk{M}\preceq \frk{C}$ and $g:\frk{N}\preceq \frk{C}$ such that $d^\frk{C}_H(f(\frk{M}),g(\frk{N})) < \e$ and $d^\frk{C}(f(\bar{a}),g(\bar{a})) < \e$. On the other hand witnesses for $\rho_{\mathrm{eGHK}(T_{\bar{a}})}(\frk{M}_{\bar{a}},\frk{N}_{\bar{a}}) < \e$ require that $f(\bar{a})=g(\bar{a})$ because in this case we're thinking of the big model $\frk{C}$ as a model of $T_{\bar{a}}$ and we need $f$ and $g$ to be elementary embeddings for $T_{\bar{a}}$, not just $T$.

We were unable to show the analogous result for uncountable cardinalities but we were also unable to construct a counterexample, so a natural question arises.

\begin{quest} \label{quest:eGHK}
Does there exist a countable theory $T$ and an uncountable cardinality $\kappa$ such that $T$ is $\mathrm{eGHK}$-$\kappa$-categorical but not $\kappa$-categorical? 
\end{quest}

This question is obviously trivial in single-sorted discrete theories, but there is a specific case of it with a many-sorted discrete theory that is non-trivial and can be resolved negatively (i.e.\ $\mathrm{eGHK}$-$\kappa$-categoricity implies $\kappa$-categoricity). If we have a many-sorted discrete theory with sorts $\{\mathcal{S}_i\}_{i<\omega}$ and we take the metric on sort $\mathcal{S}_i$ to be $\{0,2^{-i}\}$-valued, then the question is non-trivial. This is equivalent to taking a Morleyized many-sorted language $\Lcal$ and letting $\Lcal_i$ (in the sense of Definition \ref{defn:strat-lang}) be the set of all formulas with free variables in the first $i$ sorts (but, crucially, we're implicitly allowing quantification over arbitrarily high sorts). Assume that such a theory is $\mathrm{eGHK}$-$\kappa$-categorical for some uncountable $\kappa$. By Corollary \ref{cor:discrete-other-direc} in the next section, this implies that it is $\mathrm{GHK}$-$\lambda$-categorical for every uncountable $\lambda$ (since this is a discrete theory). Therefore it cannot have any Vaughtian pairs. Now to show that it is actually uncountably categorical we just need to show that it is $\omega$-stable. What we have is that it is $\mathrm{eGHK}$-$\omega$-stable (as defined in the next section). Unpacking the definition in this case, this implies that it cannot have an infinite binary tree whose parameters all come from a finite collection of sorts (whereas in principle a binary tree in general could have parameters from all of the sorts). However, by stability and in particular the fact that the sorts are stably embedded, all of the formulas in the binary tree are definable with parameters from the sort that the tree is in, so we have that in this case $\mathrm{eGHK}$-$\omega$-stability implies $\omega$-stability. Therefore the theory is actually uncountably categorical. By the results of \cite{CatCon}, this is enough to resolve Question \ref{quest:eGHK} in the context of continuous theories with totally disconnected type spaces (which include $\omega$-stable ultrametric theories) since these are bi-interpretable with many-sorted discrete theories.

\subsubsection{Counterexamples for Separable Categoricity}

The simplest non-trivial example of an approximately $\omega$-categorical theory is easiest to describe in terms of a stratified language, as defined in \cite{Hansona}. We will restate the relevant definition and a relevant result here.

\begin{defn} \label{defn:strat-lang}
A \emph{stratified language} is a language $\Lcal$ together with a designated sequence of sub-languages $\{\Lcal_i\}_{i<\omega}$ whose union is $\Lcal$. (Note that the sub-languages may have fewer sorts than the full language.)

In the context of a stratified language $\Lcal$, two $\Lcal$-structures $\frk{M}$, $\frk{N}$ are said to be \emph{approximately isomorphic}, written $\frk{M} \approxx_\Lcal \frk{N}$, if $\frk{M}\upharpoonright \Lcal_i \cong \frk{N}\upharpoonright \Lcal_i$ for every $i<\omega$. In general let $\rho_\Lcal(\frk{M},\frk{N}) = 2^{-i}$ where $i$ is the largest such that $\frk{M} \upharpoonright \Lcal_i \cong \frk{N} \upharpoonright \Lcal_i$ but $\frk{M} \upharpoonright \Lcal_{i+1} \not\cong \frk{N} \upharpoonright \Lcal_{i+1}$, or $0$ if no such $i$ exists. 

We may drop the subscript $\Lcal$ if the relevant stratified language is clear by context.
\end{defn}

\begin{prop}
Let $T$ be a discrete first-order theory (i.e.\ every predicate is $\{0,1\}$-valued in every model of $T$) and let $\Delta$ be a distortion system for $T$.

\begin{itemize}
    \item[(i)] For every finite set $\mathcal{S}_0\subseteq \mathcal{S}$ there is an $\e > 0$ such that if $\mathrm{dis}_{\Delta}(R) < \e$, then $R$ restricted to the sorts in $\mathcal{S}_0$ is the graph of a bijection. For every predicate symbol $P$ there is an $\e_P > 0$ such that whenever $\mathrm{dis}_{\Delta}(R) < \e_P$, then $R$ is the graph of a bijection that respects $R$.
    \item[(ii)] There is a stratification of $\Lcal$ such that $\rho_\Delta$ and $\rho_\Lcal$ are uniformly equivalent. In particular $\frk{M}\approxx_\Delta\frk{N}$ if and only if $\frk{M} \approxx_\Lcal \frk{N}$. 
\end{itemize}
\end{prop}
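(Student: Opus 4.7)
My plan for part (i) is to apply Lemma \ref{lem:coarsest} to each sort metric and to each predicate symbol separately. Since $T$ is discrete, each sort metric $d_s$ is a $\{0,1\}$-valued predicate symbol; applying Lemma \ref{lem:coarsest} to $d_s$ with tolerance $\tfrac12$ yields $\delta_s > 0$ such that $\mathrm{dis}_\Delta(R) < \delta_s$ forces $|d_s^\frk{M}(a,a') - d_s^\frk{N}(b,b')| < \tfrac12$ for any $(a,b),(a',b') \in R\upharpoonright s$, which is genuine equality by discreteness. Specializing $a = a'$ gives $b = b'$, so $R \upharpoonright s$ is functional, and symmetrically injective, hence bijective. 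For finite $\mathcal{S}_0$ take $\e < \min_{s \in \mathcal{S}_0} \delta_s$; for a predicate $P$, apply Lemma \ref{lem:coarsest} to $P$ at tolerance $\tfrac12$ and combine with the sort-metric bounds for $P$'s sorts to get $\e_P$.

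For part (ii), assuming countability of $\Lcal$ (needed so that a sequence of sub-languages can exhaust $\Lcal$), I enumerate the predicate symbols (including sort metrics) as $P_0, P_1, \ldots$ and let $\Lcal_i$ be the sub-language generated by $P_0, \ldots, P_i$ together with their sorts. By (i), I choose a strictly decreasing sequence $\e_0 > \e_1 > \cdots \to 0$ so that $\mathrm{dis}_\Delta(R) < \e_i$ forces $R$ to be a bijection on the $\Lcal_i$-sorts preserving $P_0, \ldots, P_i$. The forward direction of the uniform equivalence is then immediate: such an $R$ restricts to an $\Lcal_i$-isomorphism, so $\rho_\Lcal(\frk{M}, \frk{N}) \leq 2^{-i}$.

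The reverse direction is the nontrivial part. I claim that for every $\e > 0$ there is $j < \omega$ with $\rho_\Lcal(\frk{M}, \frk{N}) \leq 2^{-j}$ implying $\rho_\Delta(\frk{M}, \frk{N}) \leq \e$, and I plan to argue by contradiction via a compactness/ultraproduct argument. If the claim fails for some $\e > 0$, then for each $j$ there are $\frk{M}_j, \frk{N}_j \models T$ with an $\Lcal_j$-reduct isomorphism $f_j$ but $\rho_\Delta(\frk{M}_j, \frk{N}_j) > \e$. Extending each $f_j$ arbitrarily to a correlation $R_j$, some $\varphi_j \in \Delta$ and a tuple in $R_j$ witness $\mathrm{dis}_\Delta(R_j) > \e$. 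Using separability of $\Delta$ on each finite-arity type space (from countability of $\Lcal$ together with separability of $C(S_n(T))$), I pass to a subsequence on which $\varphi_j$ converges uniformly to a fixed $\varphi \in \Delta$. Take an ultraproduct $(\frk{M}^*, \frk{N}^*, R^*)$ of the correlated structures along a non-principal ultrafilter. For each fixed $k$, the first-order condition that $R$ is a bijection on the $\Lcal_k$-sorts preserving $P_0, \ldots, P_k$ holds for all $j \geq k$ and so transfers to $R^*$; since this holds for every $k$, $R^*$ is an $\Lcal$-isomorphism, forcing $\mathrm{dis}_\Delta(R^*) = 0$. Yet the inequality $\mathrm{dis}_\varphi(R^*) \geq \e$ transfers from the chosen witnesses via uniform convergence $\varphi_j \to \varphi$, a contradiction.

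The hardest step is the ultraproduct step in the reverse direction. One must set up the ultraproduct of correlated structures rigorously by appealing to the elementary-class characterization of $\{\mathrm{dis}_\Delta \leq \e\}$ from \cite{Hansona}; carry out the separability-based reduction to a single witnessing $\varphi$; and, if $\Delta$ is not regular, either work with $a_\Delta$ in place of $\rho_\Delta$ throughout (since the ultraproduct may only yield an almost correlation) or appeal to regularity, which holds in all the motivating examples.
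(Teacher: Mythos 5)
The paper never proves this proposition --- it is quoted from \cite{Hansona} --- so there is no in-paper argument to compare against, and I can only assess your proposal on its own terms. Part (i) is fine: since $(\bar{m},\bar{n})\in R$ gives $\rho_\Delta(\frk{M},\bar{m};\frk{N},\bar{n})\leq\mathrm{dis}_\Delta(R)$, Lemma \ref{lem:coarsest} applies, and discreteness upgrades the $\tfrac12$-approximation to equality. Part (ii), however, has two genuine gaps. The first is the choice of stratification: taking $\Lcal_i$ to be an initial segment of an arbitrary enumeration of the symbols does not work; the stratification must be adapted to $\Delta$ via the thresholds $\e_P$ from part (i) (e.g.\ $\Lcal_i=\{P:\e_P\geq 2^{-i}\}$, which needs no countability hypothesis). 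Concretely, let $T$ be the theory of infinitely many independent generic unary predicates $P_i$ on a discrete infinite set and let $\Delta$ be the set of all formulas, so that $\mathrm{dis}_\Delta(R)<1$ forces $R$ to be an isomorphism and hence $\rho_\Delta(\frk{M},\frk{N})\geq 1$ whenever $\frk{M}\not\cong\frk{N}$. With your $\Lcal_i=\{d,P_0,\dots,P_i\}$, take $\frk{N}$ to be the countable model whose elements are the finite nonempty subsets of $\omega$ (with $P_i(S)$ iff $i\in S$) and $\frk{M}=\frk{N}\cup\{\varnothing\}$: every $\Lcal_i$-reduct of either model has all $2^{i+1}$ classes countably infinite, so $\rho_\Lcal(\frk{M},\frk{N})=0$, yet $\frk{M}\not\cong\frk{N}$ and $\rho_\Delta\geq 1$. (The correct stratification for this $\Delta$ is the trivial one, $\Lcal_i=\Lcal$.)

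The second gap is in the reverse direction, which is the real content of (ii). Your extraction of a single limiting $\varphi\in\Delta$ conflates separability with sequential compactness: a sequence in a separable space like $C(S_n(T))$ need not have a uniformly convergent subsequence, and your $\varphi_j$ need not even live in a common arity. Without that step the inequality $\mathrm{dis}_\Delta(R^*)\geq\e$ cannot be obtained: the class of structures $(\frk{M},\frk{N},R)$ with $\mathrm{dis}_\Delta(R)\leq\e'$ is elementary and hence closed under ultraproducts, but its complement is not, so lower bounds on distortion are precisely what fails to survive the ultraproduct. Indeed your own (correct) computation that $R^*$ is an $\Lcal$-isomorphism shows $\mathrm{dis}_\Delta(R^*)=0$, i.e.\ the other half of your intended contradiction is simply false for this $R^*$; the argument is not repairable by choosing witnesses more carefully. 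What is actually needed is a direct uniform bound showing that an $\Lcal_j$-isomorphism $f$ (for the correctly chosen $\Lcal_j$) has $\mathrm{dis}_\Delta(\mathrm{graph}\,f)$ small --- for instance via the quoted fact that $\mathrm{dis}_{\ol{\Delta_0}}(R)=\mathrm{dis}_{\Delta_0}(R)$ applied to a suitable generating set $\Delta_0$ whose members outside $\Lcal_j$ have uniformly small oscillation --- and your proposal does not supply such an argument.
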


Now we can give the counterexample.

\begin{ex}
 Let $\Lcal_0 = \{a_i\}_{i<\omega}\cup \{b_0\}$ and let $\Lcal_{n+1} = \Lcal_n \cup \{b_{n+1}\}$. Consider the $\Lcal$-structure in which all of the constants are assigned to different elements and there are no other elements. The theory of this structure is approximately $\omega$\nobreakdash-categorical. We can describe the distortion system corresponding to this stratified language easily as the one generated by $\Delta_0 = \{d(x,a_i)\}_{i<\omega}\cup\{2^{-j} d(x,b_j)\}_{j<\omega}$, where we're taking the metric to be $\{0,1\}$-valued. Let $\Delta = \ol{\Delta_0}$.
\end{ex}

This serves as a counterexample to three things. It is an example of a $\Delta$\nobreakdash-$\omega$\nobreakdash-\hskip0pt categorical theory that is not $\omega$-categorical. It is an example of a theory in a stratified language that is approximately $\omega$-categorical but which does not have an $\omega$-categorical reduct $T \upharpoonright \Lcal_n$ for any $n$. And finally it is an example of the failure of the converse of Theorem \ref{thm:w-cat-main} part \textit{(iii)}. The unique $1$-type of a non-realized element is weakly $d_\Delta$-atomic, but not $d_\Delta$-atomic, so $S_1(T)$ cannot be metrically compact with regards to $d_\Delta$.

A more subtle hope would be that we could eliminate the parameters from Theorem \ref{thm:w-cat-main}. Unfortunately this is impossible in general, but an example is more complicated.

\begin{ex}
A theory $T$ with distortion system $\Delta$ such that for every $n<\omega$, every types $p\in S_n(T)$ is weakly $d_\Delta$-atomic, but such that $T$ is not $\Delta$\nobreakdash-$\omega$\nobreakdash-categorical.
\end{ex}

\begin{proof}

Let $E$ be a binary relation, let $\{f_i\}_{i<\omega}$ be a sequence of unary functions, and let $\{a_{i,j}\}_{i,j<\omega}$ be an array of constant symbols. 

Let $\Lcal_0=\{E\}\cup \{f_i\}_{i<\omega}\cup\{a_{0,0}\}$. For each $k<\omega$, let $\Lcal_{k+1} = \Lcal_k \cup \{a_{i,j}\}_{i,j\leq k+1}$. Finally let $\Lcal = \bigcup_{k<\omega} \Lcal_k$.

Let $\frk{M}$ be an $\Lcal$ structure with universe $\omega \times (\omega + \omega)$. Set $(i,j)E^\frk{M}(k,\ell)$ if and only if $i=k$. Let $f_k((i,j))=(i,k)$ and $a_{i,j}=(i,\omega + j)$. Finally let $T=\mathrm{Th}(\frk{M})$ and let $\Delta$ be a distortion system equivalent to the stratified language $\Lcal$.

To see that $T$ is not $\Delta$-$\omega$-categorical, let $\frk{N}$ be an elementary extension of $\frk{M}$ that realizes a new $E$-equivalence class, but only the outputs of the functions $f_i$ in that class. $\frk{M}$ and $\frk{N}$ are not $\Delta$-approximately isomorphic since there is nothing to correlate the constants $a_{i,j}$ in $\frk{M}$ with in the new equivalence class in $\frk{N}$.

To see that for every $n<\omega$ and every $p\in S_n(T)$, $p$ is weakly $d_\Delta$-atomic, let $\frk{O}$ be the countable elementary extension of $\frk{M}$ that realizes infinitely many new $E$-equivalence classes and realizes infinitely many elements not equal to any $f_i(c)$ or $a_{i,j}$ in every $E$-equivalence class. This is the unique countable $\omega$-saturated model of this theory. We would like to show that the type of any finite tuple of elements of this model is weakly $d_\Delta$-atomic. It is enough to consider tuples of the following form. Fix $N<\omega$. Consider the tuple whose elements are 

\begin{itemize}
\item The constants $a_{i,j}$ for $i,j\leq N$.
\item $N$ elements $E$ equivalent to $a_{i,0}$ for each $i\leq N$, but not equal to any $f_i(c)$ for any $c$.
\item $N$ elements not equal to any $f_i(c)$ from $N$ distinct $E$ equivalence classes that contain no elements of $\frk{M}$ ($N^2$ elements in total).
\item The image of all of those elements under $f_i$ for $i\leq N$.
\end{itemize}

Let the type of this tuple be $p_N$. Every finitary $\varnothing$-type is the type of some sub-tuple of $p_N$ for some $N$. The restriction of a weakly $d_\Delta$-atomic type to some sub-tuple is still $d_\Delta$-atomic (since projection maps are open and $1$-Lipschitz with regards to $d_\Delta$).  For any $k$, the type $p_N \upharpoonright \Lcal_k$ is realized by a tuple in $\frk{M}$, and therefore in any model of $T$, since $\frk{M}$ is the prime model. Therefore by Proposition \ref{prop:omit-types}, $p_N$ is weakly $d_\Delta$-atomic.
\end{proof}

%Let $T$ be the $\Lcal$ theory with the following axioms:

%\begin{itemize}
%\item $E$ is an equivalence relation. There are infinitely many equivalence classes and every equivalence class has infinitely many elements.
%\item If $i\neq j$, then for all $x$, $f_i(x) \neq f_j(x)$.
%\item For all $i$ and $x$, $xEf_i(x)$.
%\item For all $i,j,k,\ell$, $a_{i,j} =  a_{k,\ell}$ if and only if $i=k$ and $j=\ell$.
%\item For all $i,j,k,\ell$, $a_{i,j}E a_{k,\ell}$ if and only if $i=k$.
%\item For all $i,j,k$ and all $x$, $f_k(x) \neq a_{i,j}$.
%\end{itemize}

\subsection{Inseparable Approximate Categoricity} \label{subsec:mor-thm} %: A partial Morley's theorem

\begin{defn}
For any topological space $X$ and metric $d:X^2 \rightarrow \mathbb{R}$, a \emph{$(d,\e)$\nobreakdash-\hskip0pt perfect tree in $X$} is a family of non-empty closed sets $\{F_\sigma\}_{\sigma \in 2^{<\omega}}$ such that for any $\sigma \in 2^{<\omega}$ and $i<2$, $F_{\sigma \frown i} \subset  \mathrm{int}_X F_\sigma$ and $d_{\inf}(F_{\sigma\frown 0},F_{\sigma \frown 1}) > \e$.
\end{defn}

\begin{defn}
Fix a complete theory $T$ and a distortion system $\Delta$ for $T$.
\begin{itemize}
    \item $T$ is \emph{$\Delta$-$\kappa$-stable} if for any parameter set $A$ of size $\leq \kappa$, $\dc (S_1(A),d_{\Delta,A}) \leq \kappa$.
    \item $T$ is \emph{$\Delta$-totally transcendental} or \emph{$\Delta$-t.t.} if for any parameter set $A$ there does not exist a $(d_{\Delta,A},\e)$-perfect tree in $S_1(A)$.\qedhere
\end{itemize}
\end{defn}

\begin{prop}
Fix a theory $T$ and a distortion system $\Delta$ for $T$.
\begin{itemize}
    \item[(i)] If $T$ is $\Delta$-$\omega$-stable, then it is $\Delta$-t.t.
    \item[(ii)] If $T$ is $\Delta$-t.t., then it is $\Delta$-$\kappa$-stable for any $\kappa \geq |\Lcal|$.
    \item[(iii)] If $T$ is countable and $\Delta$-$\kappa$-stable for some $\kappa = \kappa^\omega$, then it is stable. In particular a $\Delta$-$\omega$-stable countable theory is stable.
\end{itemize}
\end{prop}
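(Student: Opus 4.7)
\emph{Part (i).} I argue the contrapositive. From a $(d_{\Delta,A},\e)$-perfect tree $\{F_\sigma\}_{\sigma\in 2^{<\omega}}$ in $S_1(A)$, compactness gives for each branch $\tau\in 2^\omega$ a type $p_\tau\in\bigcap_n F_{\tau\upharpoonright n}$, and branches diverging at level $n$ produce types with $d_{\Delta,A}$-distance $>\e$. For each $\sigma$, compactness of $F_{\sigma\frown 0}\times F_{\sigma\frown 1}$ together with lower semi-continuity of $d_{\Delta,A}$ yields finitely many formulas in $D(\Delta(A),\bar c)$, using only finitely many $A$-parameters, whose pointwise maximum uniformly witnesses $d_{\inf}(F_{\sigma\frown 0},F_{\sigma\frown 1})>\e$. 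Collecting the parameters across all countably many levels produces a countable $A_0\subseteq A$; since every witness lies in $D(\Delta(A_0),\bar c)$, its value on $p_\tau$ depends only on the $A_0$-restriction. Hence the $2^{\aleph_0}$ restrictions $p_\tau\upharpoonright A_0$ remain pairwise $\e$-separated in $d_{\Delta,A_0}$, contradicting $\Delta$-$\omega$-stability.

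\emph{Part (ii).} Again by contrapositive: suppose $|A|\leq\kappa$ satisfies $\ent_{>\e}(S_1(A),d_{\Delta,A})>\kappa$ for some $\e>0$. I construct a perfect tree via a Cantor--Bendixson derivative. Set $F^{(0)}=S_1(A)$ and at successor stages let $F^{(\alpha+1)}$ consist of those $p\in F^{(\alpha)}$ every open neighborhood of which meets $F^{(\alpha)}$ with $d_{\Delta,A}$-diameter $>\e/2$, taking intersections at limits. Each removed point lies in some basic open $U$, charged to the first stage at which $U\cap F^{(\alpha)}$ has $d_{\Delta,A}$-diameter $\leq\e/2$ and therefore $\ent_{>\e}(U\cap F^{(\alpha)})\leq 1$. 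Since the topological base has cardinality at most $|\Lcal_A|\leq\kappa$, the total $\e$-entropy removed is at most $\kappa$; hence $F^{(\infty)}$ is non-empty, and every non-empty relative open subset of it contains a $(>\e/2)$-separated pair. Lower semi-continuity of $d_{\Delta,A}$ then supports the standard inductive shrinking to disjoint closed neighborhoods, producing a $(d_{\Delta,A},\e/3)$-perfect tree and contradicting $\Delta$-t.t.

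\emph{Part (iii).} Assume $T$ is unstable, and let $\varphi(x,y)$ with values $r<s$ witness the order property. By the continuous-logic stability spectrum, for any $\lambda$ there is $|A|\leq\lambda$ admitting more than $\lambda$ types in $S_1(A)$ pairwise differing in the value of some $\varphi(x,\bar a)$ ($\bar a\in A$) by at least $s-r$. Since $\delta_\Delta$ is topometric on $S_{|x|+|y|}(T)$ (Proposition~\ref{prop:metric-char}) and continuous functions on topometric spaces are uniformly continuous with respect to the refining metric, $\varphi$ has a modulus of uniform continuity with respect to $\delta_\Delta$; restricting to the fibre over $\mathrm{tp}(\bar a)$ gives uniform continuity on $(S_1(\bar a),\delta_{\Delta(\bar a)})$, and monotonicity transfers this to $(S_1(A),\delta_{\Delta(A)})$. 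So the $>\lambda$ types are pairwise $\eta$-separated in $\delta_{\Delta(A)}\leq d_{\Delta,A}$ for some fixed $\eta>0$, contradicting $\Delta$-$\lambda$-stability at $\lambda=\kappa$. The final clause of the proposition then follows by composing \emph{(i)} and \emph{(ii)} to upgrade $\Delta$-$\omega$-stability to $\Delta$-$(2^{\aleph_0})$-stability and applying \emph{(iii)} with $\kappa=2^{\aleph_0}$. The main obstacle I anticipate is calibrating the continuous-logic spectrum argument so that the surplus of types is genuinely more than $\lambda$, rather than merely $\lambda^\omega$-many which could collapse to $\lambda$ when $\lambda=\lambda^\omega$.
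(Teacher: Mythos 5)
Your parts \emph{(i)} and \emph{(ii)} are essentially the paper's own arguments: in \emph{(i)} the same compactness extraction of finitely many $D(\Delta(A),\bar c)$-sentences per node followed by collecting a countable parameter set $A_0$, and in \emph{(ii)} the same Cantor--Bendixson-style derivative over a base of size $\kappa$ (the paper removes basic opens of entropy $\leq\kappa$ rather than of small diameter, but the accounting and the subsequent tree construction via lower semi-continuity are the same). Both are correct, including the loss from $\e$ to $\e/3$ in \emph{(ii)}, which is harmless since $\Delta$-t.t.\ quantifies over all $\e$.

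Part \emph{(iii)} is where there is a genuine problem. Your transfer step reads ``the types are pairwise $\eta$-separated in $\delta_{\Delta(A)}\leq d_{\Delta,A}$,'' but the inequality goes the other way: the paper establishes $d_{\Delta,C}\leq\delta_{\Delta(C)}$ (and $d_{\Delta,C}\leq d$) on $S_n(C)$ --- indeed $d_{\Delta,A}$ is built precisely to be a \emph{coarser} metric than $\delta_{\Delta(A)}$, since the correlation defining it is only required to match the parameters approximately in the sense of $D(\Delta(A),\bar c)$ rather than fix them. Separation in the finer metric $\delta_{\Delta(A)}$ therefore gives no separation in $d_{\Delta,A}$, and your contradiction with $\Delta$-$\kappa$-stability does not follow as written. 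The conclusion you want is true and repairable: one must show that the formula $\varphi(x,\bar a)$ is uniformly continuous with respect to $d_{\Delta,A}$ itself, which follows from Proposition \ref{prop:dd-estimate} \emph{(ii)} together with Lemma \ref{lem:coarsest} --- if $d_{\Delta,A}(p,q)$ is small one gets a correlation of small $\Delta$-distortion moving a realization of $p$ to something $d$-close to a realization of $q$ over approximately the same parameters, and $\varphi$ is uniformly continuous with respect to both $\delta_\Delta$ and $d$. But that is a different argument from the one you gave, and the one you gave fails at its only nontrivial step.

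It is also worth noting that the paper's proof of \emph{(iii)} is far lighter than your route through the order property and the continuous stability spectrum (whose calibration you correctly flag as a loose end): since $d_{\Delta,A}$ is a complete metric and a complete metric space of density character $\lambda$ has cardinality $\lambda$ or $\lambda^\omega$, $\Delta$-$\kappa$-stability with $\kappa=\kappa^\omega$ forces $|S_1(A)|\leq\kappa$ for all $|A|\leq\kappa$, which is stability outright by counting types with respect to the discrete metric. This bypasses the order-property machinery, the uniform-continuity transfer, and the $\mathrm{ded}(\lambda)$ bookkeeping entirely.
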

\begin{proof}
\textit{(i)}: Assume that for some set of parameters $A$ and some $\e>0$ there is a $(d_{\Delta,A},\e)$-perfect tree $\{F_\sigma\}_{\sigma \in 2^{<\omega}}$ in $S_1(A)$. 

Fix $\sigma \in 2^{<\omega}$. Fix $p\in F_{\sigma \frown 0}$ and $q\in F_{\sigma \frown 1}$. We have by assumption that $d_{\Delta,A}(p,q) > \e$. This implies that there exists a $D(\Delta(A),\bar{c})$-sentence, $\varphi_{\sigma,p,q}$, such that $|\varphi_{\sigma,p,q}(p)-\varphi_{\sigma,p,q}(q)|>\e$. %We may assume that $\varphi_{\sigma,p,q}(p)=0$ and $\varphi_{\sigma,p,q}(q) > 0$.
 There are open sets $U\ni p$ and $V \ni q$ such that for all $r\in U$ and $s\in V$, $|\varphi_{\sigma,p,q}(r)-\varphi_{\sigma,p,q}(s)| > \e$. Therefore by compactness there is a finite collection $\Sigma_\sigma$ of $D(\Delta(A),\bar{c})$-sentences such that for any $p \in F_{\sigma\frown 0}$ and $q\in F_{\sigma\frown 1}$ there is a $\varphi \in \Sigma_\sigma$ such that $|\varphi(p)-\varphi(q)| > \e$. 

For each $\sigma \in 2^{<\omega}$, let $\psi_\sigma$ be a restricted $A$-formula such that $\cset{\psi_\sigma\leq \frac{2}{3}}\subseteq \mathrm{int}_{S_1(A)}F_\sigma$ and such that $F_{\sigma \frown 0},F_{\sigma \frown 1} \subseteq \cset{\psi_\sigma < \frac{1}{3}}$. %Note that if we take $G_\sigma = [\psi_\sigma \leq \frac{2}{3}]$, then $\{G_\sigma\}_{\sigma \in 2^{<\omega}}$ is still a $(d_{\Delta,A},\e)$-perfect tree.
Now let $A_0$ be the collection of all parameters used in some $\varphi \in \Sigma_\sigma$ or $\psi_\sigma$ for some $\sigma \in 2^{<\omega}$. Note that $A_0$ is a countable set of parameters. For each $\sigma \in 2^{<\omega}$, let $G_\sigma = \cset{\psi_\sigma \leq \frac{2}{3}}$ and note that by construction for any $\sigma \in 2^{<\omega}$, and $i<2$, $G_{\sigma \frown i} \subseteq  \mathrm{int}_{S_1(A_0)} G_\sigma$.

To verify that $\{G_\sigma\}_{\sigma \in 2^{<\omega}}$ is a $(d_{\Delta, A_0},\e)$-perfect tree in $S_1(A_0)$ we now just need to verify that $d_{\Delta,A_0}(p,q)>\e$ for any $p\in G_{\sigma\frown 0}$ and $q\in G_{\sigma \frown 1}$, but this follows easily from the inclusion of the parameters from $\Sigma_\sigma$. For any such $p,q$ there is some sentence $\varphi \in \Sigma_\sigma$ such that $|\varphi(p)-\varphi(q)|>\e$. So we have that $\{G_\sigma\}$ is a $(d_{\Delta,A_0},\e)$\nobreakdash-perfect tree. Therefore $\dc(S_1(A_0),d_{\Delta,A_0}) \geq 2^{\aleph_0}$ and $T$ is not $\Delta$-$\omega$\nobreakdash-stable.

\textit{(ii)}: Suppose that $T$ fails to be $\Delta$-$\kappa$-stable for some $\kappa \geq |\Lcal|$. Let $A$ be a collection of parameters of cardinality $\leq \kappa$ such that $\dc (S_1(A),d_{\Delta,A}) > \kappa$. This implies that there is some $\e >0$ such that $\ent_{>\e} (S_1(A),d_{\Delta,A}) \geq \kappa^+$.

Let $\mathcal{B}$ be a base for the topology of $S_1(A)$ of cardinality $\kappa$ (this exists because $\kappa \geq |\Lcal|$). Define a transfinite sequence of closed subsets.

\begin{itemize}
\item $X_0 = S_1(A)$
\item $X_{\alpha + 1} = X_\alpha \setminus \bigcup \{U\in \mathcal{B}:\ent_{>\e}(U\cap X_\alpha) \leq \kappa\}$
\item $X_\lambda = \bigcap_{\alpha < \lambda} X_\alpha$, for $\lambda$ a limit or $\infty$.
\end{itemize}

For each $U \in \mathcal{B}$, let $\alpha(U)$ be the smallest ordinal such that $\ent_{>\e}(U\cap X_{\alpha(U)}) \leq \kappa$ if it exists, and $\infty$ otherwise. Let $\beta = \sup\{\alpha(U):\alpha(U)<\infty\}$. Since $\kappa^+$ is a regular cardinal, we must have that $\beta < \kappa^+$. In particular this implies that $X_\beta=X_{\beta + 1} = X_\infty$.

Now assume that $X_\beta = \varnothing$. Let $Y$ be a $(>\e)$-separated subset of $S_1(A)$ of cardinality $\kappa^+$ (this must exist because $\kappa^+$ is a regular cardinal). Since $\kappa^+$ is a regular cardinal, there must be some $\alpha$ such that $|Y \cap (X_\alpha\setminus X_{\alpha+1})| = \kappa^+$. Furthermore since $\mathcal{B}$ has cardinality $\kappa$ this implies that there must be a $U \in \mathcal{B}$ such that $|Y \cap (X_\alpha\setminus X_{\alpha+1})\cap U| = \kappa^+$, but this implies that $\ent_{>\e}(U\cap X_\alpha) \geq \kappa^+$, which is a contradiction. Therefore $X_\beta$ is non-empty.

Now $X_\beta$ must have the property that for any $U \in \mathcal{B}$, $\ent_{>\e}(U\cap X_\beta) \geq \kappa^+$, so in particular in any non-empty open subset of $X_\beta$ there exists $p,q$ with $d_{\Delta,A}(p,q) > \e$. 

Let $F_\varnothing = S_1(A)$. For each $\sigma \in 2^{<\omega}$, given $F_\sigma$, a closed set whose interior has non-empty intersection with $X_\beta$, find $p,q \in X_\beta \cap \mathrm{int}_{S_1(A)} F_\sigma$ such that $d_{\Delta,A}(p,q) > \e$. Find $F_{\sigma \frown 0}$, a closed set such that $p \in \mathrm{int}_{S_1(A)} F_{\sigma \frown 0}$, $F_{\sigma \frown 0} \subset  \mathrm{int}_{S_1(A)} F_\sigma$, and such that $F_{\sigma \frown 0} \cap B_{\leq \e}^{d_{\Delta,A}}(q) = \varnothing$. Then find a $F_{\sigma \frown 1}$, a closed set such that $q \in \mathrm{int}_{S_1(A)}F_{\sigma\frown 1}$, $F_{\sigma \frown 1} \subset \mathrm{int}_{S_1(A)} F_\sigma$, and such that $F_{\sigma \frown 1} \cap F_{\sigma \frown 0}^{d_{\Delta,A} \leq \e} = \varnothing$. Then by construction $\{F_\sigma\}_{\sigma \in 2^{<\omega}}$ is a $(d_{\Delta,A},\e)$-perfect tree, so $T$ is not $\Delta$-t.t.

\textit{(iii)}: The cardinality of a complete metric space of density character $\lambda$ is always either $\lambda$ or $\lambda^\omega$. This implies that the cardinality of $S_1(A)$ for any set of parameters $A$ with $|A|\leq \kappa$ must be $\leq \kappa$, since it's $d_{\Delta,A}$-density character is $\leq \kappa = \kappa^\omega$. Therefore $T$ is stable with regards to the discrete metric and in particular stable.
\end{proof}

\begin{prop} \label{prop:w-dense}
For any theory $T$ and distortion system $\Delta$ for $T$, if $T$ is $\Delta$-t.t., then for any set of parameters $A$ and any closed set $F \subseteq S_n(A)$, $d_{\Delta(A)}$\nobreakdash-atomic\nobreakdash-in\nobreakdash-$F$ types are dense in $F$.
\end{prop}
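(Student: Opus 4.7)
The plan is to prove the contrapositive. Suppose that for some closed $F \subseteq S_n(A)$ and some non-empty relatively open $V \subseteq F$, no type in $V$ is $d_{\Delta,A}$-atomic-in-$F$; the goal is to construct a $(d_{\Delta,A},\varepsilon)$-perfect tree and derive a contradiction with $\Delta$-t.t. The first move is a Baire category reduction. For each $m \geq 1$ set $N_m = \{p \in F : p \in \mathrm{int}_F B_{\leq 1/m}^{d_{\Delta,A}}(p)\}$ and $U_m = \mathrm{int}_F N_m$. A triangle-inequality check gives $N_{2m} \subseteq U_m$, so $p$ is $d_{\Delta,A}$-atomic-in-$F$ exactly when $p \in \bigcap_m U_m$, and hence $V \subseteq \bigcup_m (F \setminus U_m)$. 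Since $F$ is compact Hausdorff (hence Baire), some $F \setminus U_m$ has non-empty interior in $V$, giving a non-empty open $W \subseteq V$ with $W \cap U_m = \varnothing$ and therefore $W \cap N_{2m} = \varnothing$. Setting $\varepsilon = 1/(2m)$, every $p \in W$ has the property that every open neighborhood of $p$ in $F$ contains some $q$ with $d_{\Delta,A}(p,q) > \varepsilon$.

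Next, recursively build $\{F_\sigma\}_{\sigma \in 2^{<\omega}}$ with each $F_\sigma$ non-empty closed in $F$ and $\mathrm{int}_F F_\sigma \cap W \neq \varnothing$, starting from a closed neighborhood $F_\varnothing \subseteq W$. Given $F_\sigma$, pick $p \in W \cap \mathrm{int}_F F_\sigma$ and, by the property of $W$, some $q$ in this open set with $d_{\Delta,A}(p,q) > \varepsilon$. The lower semi-continuity of $d_{\Delta,A}$ on $F \times F$ (topometric condition) makes $\{(r,s) : d_{\Delta,A}(r,s) > \varepsilon\}$ open, so there are open $U_p \ni p$ and $U_q \ni q$ in $F$ with $U_p \times U_q$ contained in this set. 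Using normality of the compact Hausdorff space $F$, pick closed $F_{\sigma \frown 0} \subset U_p \cap \mathrm{int}_F F_\sigma$ and $F_{\sigma \frown 1} \subset U_q \cap \mathrm{int}_F F_\sigma$, each with non-empty interior meeting $W$; lower semi-continuity plus compactness then yields $d_{\inf}(F_{\sigma \frown 0}, F_{\sigma \frown 1}) > \varepsilon$. The result is a $(d_{\Delta,A},\varepsilon)$-perfect tree in $F$.

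The final step, and the main technical obstacle, is reconciling this with the definition of $\Delta$-t.t., which is phrased for $X = S_1(A)$ rather than for $F \subseteq S_n(A)$. The natural approach is to fix a realization $\bar{a} = (a_1,\ldots,a_{n-1})$ in a monster model of some $(n-1)$-type occurring in $\pi(F_\varnothing) \subseteq S_{n-1}(A)$, and to interleave the Baire reduction and tree construction with the choice of $\bar{a}$ so that each $F_\sigma$ lies in the fiber $\pi^{-1}(\mathrm{tp}(\bar{a}/A))$ and remains of non-empty relative interior when viewed inside $S_1(A\bar{a})$ through the identification $p(x_1,\ldots,x_n) \leftrightarrow p(a_1,\ldots,a_{n-1},x)$; under this identification the restriction of $d_{\Delta,A}$ to the fiber corresponds to $d_{\Delta,A\bar{a}}$, so the resulting $(d_{\Delta,A\bar{a}},\varepsilon)$-perfect tree sits in $S_1(A\bar{a})$ and directly contradicts $\Delta$-t.t. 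The delicate point in this last step is that the fiber can have empty interior in $S_n(A)$, which is why the choice of $\bar{a}$ must be made compatibly with the tree construction rather than afterwards.
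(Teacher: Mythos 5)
The paper discharges this proposition with a one-line citation to Corollary 3.8 of Ben Yaacov's topometric-spaces paper; you instead reconstruct the underlying Cantor–Bendixson argument from scratch. Your steps 1--5 — the $N_m / U_m$ Baire reduction with the $N_{2m} \subseteq U_m$ observation, and the recursive construction of a $(d_{\Delta,A},\e)$-perfect tree using lower semi-continuity, compactness, and normality — are a correct and self-contained version of exactly what that citation encapsulates. (One small bookkeeping point: your tree has $F_{\sigma\frown i} \subset \mathrm{int}_F F_\sigma$, whereas the paper's definition of perfect tree takes interiors in the ambient type space; passing from one to the other requires a routine fattening argument using compactness and normality, which you should at least acknowledge.)

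The real gap is in step 6, the reduction of the $\Delta$-t.t.\ hypothesis from $S_1$ to $S_n$. Two separate problems. First, the claim that the restriction of $d_{\Delta,A}$ to a fiber over $\mathrm{tp}(\bar{a}/A)$ ``corresponds to'' $d_{\Delta,A\bar{a}}$ is not an identity: under the renaming $c_i \leftrightarrow a_i$, $D(\Delta(A),\bar{c})$ contains $d(x,a_i)$ for each $i<n$, whereas $D(\Delta(A\bar{a}),c)$ only explicitly adjoins $d(x,c)$, and the formula $d(x,a_i)$ need not lie in $\Delta(A\bar a)$ (for the Gromov--Hausdorff system $d(x,y)\notin\Delta$). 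What does hold, via Proposition~\ref{prop:dd-estimate}, is a one-sided uniform domination of $d_{\Delta,A}$ on the fiber by $d_{\Delta,A\bar{a}}$, which after rescaling $\e$ is in fact the direction your argument needs; so this is an imprecision, not the fatal flaw. Second, and fatally, the ``interleaving'' is not an argument: the Baire reduction runs in $F$ and the tree is grown by choosing points of the relatively open $W\subseteq F$, none of which is constrained to a fiber, and, as you note, a fiber can have empty interior, so intersecting the $F_\sigma$ with a fiber can kill them outright. Running the Baire category argument inside a single fiber would require knowing in advance that atomic-in-(fiber~$\cap\,F$) types are not dense there, which is a different hypothesis. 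This $n$-to-$1$ reduction is the genuinely nontrivial content you would need to supply, analogous to (and thornier than) the classical fact that $\omega$-stability for $1$-types implies it for $n$-types.

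Two mitigating observations: the paper's own definition of $\Delta$-t.t.\ is stated only for $S_1(A)$, so it is either implicitly assuming this reduction or silently applying the cited topometric result directly to $(S_n(A),d_{\Delta,A})$ rather than $(S_1(A),d_{\Delta,A})$; and both places where the paper invokes this proposition use only the $n=1$ case, for which your steps 1--5 plus the fattening remark constitute a complete proof.
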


\begin{proof}
This is follows from Corollary 3.8 of \cite{BenYaacov2008}.
\end{proof}

\begin{prop} \label{prop:EM-thin}
For any countable first-order theory $T$, there are models of any density character that only realize separably many types over countable parameter sets.
\end{prop}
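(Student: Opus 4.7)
The plan is to adapt the classical Ehrenfeucht--Mostowski construction to continuous logic, producing a thin model of density character $\kappa$ based on a well-ordered set of order-indiscernibles. First I would expand $T$ to a Skolemized theory $T^{\mathrm{Sk}}$ in a countable continuous language, adding uniformly continuous $\e$-Skolem functions that approximately witness infima of formulas. Using the continuous-logic analogue of the Ehrenfeucht--Mostowski theorem, which follows from Ramsey's theorem together with compactness, I would obtain a sequence of order-indiscernibles $(a_i)_{i<\kappa}$ in some model of $T^{\mathrm{Sk}}$ with index set the ordinal $\kappa$, and arrange (consistently, in the non-compact case) that these indiscernibles be pairwise $d$-separated. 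Let $\frk{M}$ be the metric completion of the Skolem hull of $\{a_i : i < \kappa\}$; this is a model of $T$ of density character exactly $\kappa$.

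Next I would analyse types realized in $\frk{M}$ over a fixed countable $A \subseteq \frk{M}$. Because each element of $\frk{M}$ is a uniform $d$-limit of Skolem terms in finitely many indiscernibles, each $a \in A$ can be approximated to arbitrary accuracy by terms using countably many $a_j$; taking the union over $A$ gives a single countable $J \subseteq \kappa$ such that $A$ lies in the metric closure of the Skolem hull of $\{a_j : j \in J\}$. For any tuple $(t_1(a_{I_1}),\dots,t_n(a_{I_n}))$ drawn from the Skolem hull, order-indiscernibility implies that its type over $A$ depends only on the sequence of Skolem terms together with the order-isomorphism type of $\bigcup_k I_k$ inside $(J \cup \bigcup_k I_k, <)$ as a subset of $(\kappa, <)$.

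The counting is then straightforward: the Skolem term algebra of a countable language is countable, and a countable subset of a well-ordered set admits only countably many downward-closed subsets, so there are only countably many such order-isomorphism types of finite positions relative to $J$. Hence only countably many types over $A$ are realized in the Skolem hull. Passing from the Skolem hull to its metric completion $\frk{M}$, any type over $A$ realized in $\frk{M}$ is a $d$-limit of types realized in the Skolem hull, so the set of realized types over $A$ lies in the $d$-closure of a countable set, and thus has countable $d$-density character, giving the desired separability.

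The main difficulty, beyond setting up the continuous-logic Skolemization correctly, is the transfer from the skeleton classification of types in the Skolem hull to the full metric completion. This relies on the uniform continuity of formulas together with completeness of the $d$-metric on type spaces, so that types of $d$-limits of tuples equal $d$-limits of types. The argument has to be organized so that the countable collection of skeleton data simultaneously captures a $d$-dense subset of the types realized in all of $\frk{M}$, not merely those realized in the pure Skolem hull.
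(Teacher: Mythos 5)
Your overall strategy is the right one, and it is in fact how the result is proved in the source the paper cites (Proposition 3.37 of Ben Yaacov's paper on uncountable dense categoricity, which builds Ehrenfeucht--Mostowski models): the counting of realized types by Skolem terms together with the position of the index tuple relative to a countable $J\subseteq\kappa$, the use of well-orderedness to see that $J$ has only countably many cuts, and the passage to the metric completion via $d(\tp(b/A),\tp(b'/A))\leq d(b,b')$ are all correct and are exactly the points that make the argument work. The step you flag as the main difficulty --- transferring from the Skolem hull to its completion --- is actually the easy part.

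The genuine gap is in your very first step: a countable continuous signature with \emph{uniformly continuous} $\e$-Skolem functions need not exist, even after passing to elementary extensions. The obstruction is topological: approximate minimizers of $\varphi(x,\bar y)$ need not admit any continuous selection in $\bar y$. Concretely, let $\frk{M}$ have a compact definable set $B$ isometric to a circle, a compact definable set $E$ double-covering it via a definable map $\pi$, plus an infinite discrete part to keep the model non-compact; for $\varphi(x,y)=d(\pi(x),y)$ one has $\inf_x\varphi(x,y)=0$ for all $y\in B$, but any continuous $f$ with $\varphi(f(y),y)\leq\e$ for small $\e$ would be an approximate section of the double cover, which is impossible since $\mathrm{id}_{\pi_1(B)}$ does not factor through multiplication by $2$; and since $B$ and $E$ are compact they are unchanged in every model of the theory, so no model admits such an $f$. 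Adding dummy arguments does not help. This is precisely why the continuous EM construction is not a routine transcription of the classical one: the known fixes either work in Ben Yaacov's cats/positive-logic framework, where function symbols are not required to be uniformly continuous and compactness still applies, or replace Skolem function symbols by an ``EM blueprint'' that coherently assigns complete $L$-types to formal term-tuples (so that the resulting term set is Tarski--Vaught closed) without ever enlarging the signature. Your proof needs one of these devices in place of the asserted Skolemization; as written, the extraction of indiscernibles in the expanded language has nothing to run on. The paper itself avoids the issue entirely by citing Ben Yaacov's result.
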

\begin{proof}
This follows from Proposition 3.37 of \cite{ben-yaacov_2005}.
\end{proof}

\begin{cor} \label{cor:cat-w-stab}
If a countable theory $T$ with distortion system $\Delta$ is $\Delta$\nobreakdash-$\kappa$\nobreakdash-\hskip0pt categorical for uncountable $\kappa$, then $T$ is $\Delta$-$\omega$-stable, so in particular it is $\Delta$-t.t. and $\Delta$-$\kappa$-stable for every $\kappa \geq \aleph_0$.
\end{cor}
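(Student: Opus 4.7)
The plan is to prove the contrapositive: if $T$ is not $\Delta$-$\omega$-stable, then $T$ fails to be $\Delta$-$\kappa$-categorical for every uncountable $\kappa$. The remaining conclusions ($\Delta$-t.t., and $\Delta$-$\kappa$-stable for all $\kappa \geq \aleph_0$) then follow immediately from the preceding proposition. Assume $T$ is not $\Delta$-$\omega$-stable and pick a countable parameter set $A_0$ with $\dc(S_1(A_0),d_{\Delta,A_0}) > \aleph_0$; a standard packing argument produces an $\e > 0$ and a $(>\e)$-separated family $\{p_i : i < \aleph_1\} \subseteq (S_1(A_0),d_{\Delta,A_0})$.

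Fix an uncountable $\kappa$. Build a ``fat'' model $\mathfrak{M}$ of density character $\kappa$ containing $A_0$ and realizing each $p_i$: iteratively adjoin realizations of the $p_i$ to a separable model containing $A_0$, reaching density character $\aleph_1$ after $\aleph_1$ steps, then apply upward L\"owenheim--Skolem to reach density character $\kappa$, with witnesses $a_i \models p_i$. Separately, build a ``thin'' model $\mathfrak{N}$ of density character $\kappa$ via Proposition \ref{prop:EM-thin}: $\mathfrak{N}$ realizes only separably many types (separable in $d$, hence in $d_{\Delta,B} \leq d$ for any $B$) over every countable parameter set. By $\Delta$-$\kappa$-categoricity, $a_\Delta(\mathfrak{M},\mathfrak{N}) = 0$, so given $\delta > 0$ as small as desired, we may pick an almost correlation $R$ with $\mathrm{dis}_\Delta R < \delta$.

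After a small perturbation in $d^\mathfrak{M}$ (using density of $\mathrm{dom}(R)$), assume $A_0 \cup \{a_i : i < \aleph_1\} \subseteq \mathrm{dom}(R)$, and set $b_i = R(a_i)$, $A'_0 = \{R(a) : a \in A_0\} \subseteq \mathfrak{N}$ (which is countable). By Proposition \ref{prop:dd-estimate}(i) applied with empty parameters, $d_\Delta^{1+\omega}(\mathrm{tp}(a_i A_0),\mathrm{tp}(b_i A'_0))$ can be made arbitrarily small by taking $\delta$ small. The key transfer step is to conclude that the $d_{\Delta,A_0}$-separation of $\{p_i\}$ translates, after this component-wise matching, into approximate $(\e/2)$-separation of $\{\mathrm{tp}(b_i/A'_0)\}_{i<\aleph_1}$ in $d_{\Delta,A'_0}$. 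This yields $\aleph_1$ many $(\e/2)$-separated types over a countable parameter set realized in $\mathfrak{N}$, contradicting thinness. The main obstacle is precisely this transfer: because $A_0$ and $A'_0$ are different parameter sets, one must unpack the definition $d_{\Delta,A}(p,q) = \delta^0_{D(\Delta(A),\bar{c})}(p(\bar{c},A),q(\bar{c},A))$ and relate the formula classes $D(\Delta(A_0),\bar{c})$ and $D(\Delta(A'_0),\bar{c})$ via $R$, using its small distortion to bound the mismatch between corresponding formulas (essentially: small distortion of $R$ on $A_0 \times A'_0$ means each $\varphi(x,\bar{a})$ witnessing $d_{\Delta,A_0}(p_i,p_j) > \e$ corresponds to a formula $\varphi(x,\bar{a}')$ with $\bar{a}' \in A'_0$ nearly witnessing the same separation of $\mathrm{tp}(b_i/A'_0), \mathrm{tp}(b_j/A'_0)$).
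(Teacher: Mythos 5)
Your proposal is correct and follows essentially the same route as the paper: contradict non-$\Delta$-$\omega$-stability by playing a ``fat'' model realizing an uncountable $(d_{\Delta,A}>\e)$-separated family against a ``thin'' model from Proposition \ref{prop:EM-thin}, transporting the separated family across an almost correlation of small distortion and using $d_{\Delta,B}\leq d$ to contradict thinness. The only cosmetic difference is in the transfer step you flag as the main obstacle: where you propose matching each witnessing sentence $\varphi(x,\bar a)$ with its counterpart $\varphi(x,\bar a')$ and bounding the mismatch via $\mathrm{dis}_\Delta(R)$ and Lemma \ref{lem:coarsest}, the paper packages exactly this estimate by viewing all the types over the various countable parameter sets as points of $S_\omega(T_c)$ equipped with the single metric $\delta^\omega_{D(\Delta,c)}$ and invoking the triangle inequality there.
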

\begin{proof}
Assume that $T$ is not $\Delta$-$\omega$-stable. Then there is some countable parameter set $A$ and an $\e>0$ such that there is an uncountable $(d_{\Delta,A} > \e)$-separated set of types $P$ in $S_1(A)$.  Let $\frk{M}$ be a model of density character $\kappa$ that realizes $A$ and every element of $P$. Let $\frk{N}$ be a model of $T$ of density character $\kappa$ such that for any countable $B \subset \frk{N}$, the set of types in $S_1(B)$ realized in $\frk{N}$ is separable with respect to the $d$-metric. 

Find a $\delta > 0$ small enough that $\delta_\Delta(\mathrm{tp}(ab),\mathrm{tp}(ce)) < \delta$ then $|d(a,b)-d(c,e)| < \frac{\e}{9}$. Find an almost correlation $R$ between $\frk{M}$ and $\frk{N}$ with $\mathrm{dis}_\Delta(R) < \frac{\e}{9} \downarrow \delta$. By replacing each element of $A$ with a Cauchy sequence if necessary we may assume that $A$ is in the domain of $R$ (extending each element of $P$ so that they are still realized in $\frk{M}$, note that they are still $(d_{\Delta,A} > \e)$-separated). Let $B$ be some $\omega$-tuple of elements of $\frk{N}$ correlated with $A$ by $R$.

Now for each $p \in P$, find a type $p\prime$ with $d(p,p^\prime) < \frac{\e}{3}$ such that $p^\prime$ is still realized in $\frk{M}$ but by something in the domain of $R$. $P^\prime$ is now $(d_{\Delta,A} > \frac{\e}{3})$-separated. Now for each $p\prime \in P$, let $q \in S_1(B)$ be a type such that $q$ is realized in $\frk{N}$ by something correlated to a realization of $p^\prime$ in $\frk{M}$. Let $Q$ be the collection of these types.

By unpacking definitions we have that for any countable parameter set $E$, the metric $d_{\Delta,E}$ on $S_1(E)$ is equivalent to the metric $\delta^\omega_{D(\Delta,c)}$ restricted to $S_1(E)$ as a subspace of $S_\omega(T_c)$, where the $\omega$ variables correspond to an enumeration of $E$, $c$ is a fresh constant symbol corresponding to the free variables in the types in $S_1(E)$, and $T_c$ is the (incomplete) theory of $T$ with an extra constant (and no new axioms).

Pick $q_0,q_1 \in Q$, corresponding to $p_0^\prime,p_1^\prime \in P^\prime$. The choice of $\delta$ implies that $\delta^\omega_{D(\Delta,c)}(p_0^\prime,q_0) < \frac{\e}{9}$ and $\delta^\omega_{D(\Delta,c)}(q_1,p_1^\prime) < \frac{\e}{9}$. Since 
\begin{align*}
d_{\Delta,A}(p_0^\prime,p_1^\prime) &\leq \delta^\omega_{D(\Delta,c)}(p_0^\prime,q_0) + \delta^\omega_{D(\Delta,c)}(q_0,q_1) + \delta^\omega_{D(\Delta,c)}(q_1,p_1^\prime), \\
d_{\Delta,A}(p_0^\prime,p_1^\prime) &\leq \delta^\omega_{D(\Delta,c)}(p_0^\prime,q_0) + d(q_0,q_1) + \delta^\omega_{D(\Delta,c)}(q_1,p_1^\prime),
\end{align*}
this implies that $d(q_0,q_1) > \frac{\e}{3} - \frac{2\e}{9} = \frac{\e}{9}$. So $Q$ is $(d>\frac{\e}{9})$-separated which is a contradiction. Therefore $T$ is $\Delta$-$\omega$-stable.
\end{proof}

\begin{defn}
For a countable theory $T$ with distortion system $\Delta$, a model $\frk{M}\models T$ is \emph{$\Delta$-$\kappa$-saturated} if for any $A\subseteq \frk{M}$ with $|A|<\kappa$, $\frk{M}$ realizes a $d_{\Delta,A}$\nobreakdash-dense subset of $S_1(A)$.

If $\dc \frk{M} = \kappa$, we say that $\frk{M}$ is $\Delta$-saturated.
\end{defn}

When $\Delta$ is the collection of all formulas, a structure is $\Delta$-$\kappa$-saturated if and only if it is $\kappa$-saturated.

At this point there is a notable omission. Given $\frk{M},\frk{N}\models T$, both $\Delta$-saturated with the same uncountable density character, it's unclear whether or not we can conclude $\rho_\Delta(\frk{M},\frk{N})=0$. If we could then we would be able to prove the full analog of Morley's theorem for distortion systems.

\begin{prop}
If $T$ is a complete, countable, $\Delta$-$\kappa$-stable theory with non-compact models for $\Delta$, a distortion system for $T$,  then for every uncountable regular $\lambda \leq \kappa$, $T$ has a $\Delta$-$\lambda$-saturated model of density character $\kappa$.
\end{prop}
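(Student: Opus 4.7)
The strategy is to mimic the classical construction of saturated models under stability, replacing ``realize every type'' with ``realize a $d_\Delta$-dense set of types'' and using regularity of $\lambda$ to push parameter sets into a chain. Specifically, I would build a continuous elementary chain $(\frk{M}_\alpha)_{\alpha \leq \lambda}$ of complete models of $T$, each of density character $\kappa$, such that $\frk{M}_{\alpha+1}$ realizes every type in some $d_{\Delta,\frk{M}_\alpha}$-dense subset of $S_1(\frk{M}_\alpha)$, and then show that $\frk{M} \coloneqq \frk{M}_\lambda$ is $\Delta$-$\lambda$-saturated.

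For the construction: since $\Lcal$ is countable and $T$ has non-compact models, downward Löwenheim--Skolem produces $\frk{M}_0 \models T$ with $\dc \frk{M}_0 = \kappa$. At a successor stage, $\Delta$-$\kappa$-stability gives a set $P_\alpha \subseteq S_1(\frk{M}_\alpha)$ of cardinality $\leq \kappa$ that is $d_{\Delta,\frk{M}_\alpha}$-dense, and Löwenheim--Skolem then yields a complete $\frk{M}_{\alpha+1} \succeq \frk{M}_\alpha$ realizing every $p \in P_\alpha$ with $\dc \frk{M}_{\alpha+1} \leq \kappa$. At a limit stage $\alpha \leq \lambda$ I take $\frk{M}_\alpha$ to be the metric completion of $\bigcup_{\beta<\alpha}\frk{M}_\beta$; since $|\alpha| \leq \lambda \leq \kappa$, its density character remains $\leq \kappa$, and it is at least $\kappa$ because $\frk{M}_0$ already has density character $\kappa$.

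To verify $\Delta$-$\lambda$-saturation of $\frk{M}$, fix $A \subseteq \frk{M}$ with $|A|<\lambda$. Each $a \in A$ is the limit of a countable sequence from $\bigcup_\beta \frk{M}_\beta$, and since $\lambda$ is uncountable and regular, that sequence lies entirely in some $\frk{M}_{\gamma_a}$ with $\gamma_a < \lambda$, so by completeness $a \in \frk{M}_{\gamma_a}$; applying regularity again to $|A|<\lambda$ produces $\alpha < \lambda$ with $A \subseteq \frk{M}_\alpha$. The restriction map $\pi \colon S_1(\frk{M}_\alpha) \to S_1(A)$ is surjective, and Proposition \ref{prop:metric-char}(i) together with the inclusion $D(\Delta(A),\bar c) \subseteq D(\Delta(\frk{M}_\alpha), \bar c)$ shows it is $1$-Lipschitz from $d_{\Delta,\frk{M}_\alpha}$ to $d_{\Delta,A}$. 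Hence $\pi(P_\alpha)$ is $d_{\Delta,A}$-dense in $S_1(A)$, and every type in $\pi(P_\alpha)$ is realized in $\frk{M}_{\alpha+1} \preceq \frk{M}$.

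The main (and really the only non-routine) check is verifying that restriction of types to a smaller parameter set is $1$-Lipschitz with respect to $d_\Delta$; once one unpacks the definition of $d_{\Delta,A}$ via $\delta^0_{D(\Delta(A),\bar c)}$ and invokes the sup-over-formulas formula in Proposition \ref{prop:metric-char}(i), this is essentially automatic. The rest is standard Morley-style bookkeeping, made possible by choosing $\lambda$ regular and $\lambda \leq \kappa$.
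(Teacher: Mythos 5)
Your proposal is correct and follows essentially the same route as the paper: a continuous elementary chain of length $\lambda$ in which each successor stage realizes a $d_{\Delta,\frk{M}_\alpha}$-dense set of types (of size $\leq\kappa$ by $\Delta$-$\kappa$-stability), followed by completion of the union and an appeal to regularity of $\lambda$ to trap small parameter sets in a single stage, with the $1$-Lipschitzness of restriction transferring density down to $S_1(A)$. The only cosmetic difference is that the paper works with pre-models of cardinality exactly $\kappa$ (so stability applies verbatim), whereas with your complete models one should first pass to a dense subset of $\frk{M}_\alpha$ of cardinality $\kappa$ before invoking $\Delta$-$\kappa$-stability, since a complete model of density character $\kappa$ can have cardinality $\kappa^\omega$.
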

\begin{proof}
Let $\frk{M}_0$ be any pre-model of density character and cardinality $\kappa$. Form a continuous elementary chain $\{\frk{M}_i\}_{i<\lambda}$ of length $\kappa$ of pre-models of density character and cardinality $\kappa$ such that for each $i$, $\frk{M}_{i+1}$ realizes a $d_{\Delta, \frk{M}_i}$-dense set of types in $S_1(\frk{M}_i)$.

Finally let $\frk{M}$ be the completion of the union. Clearly $\dc \frk{M} = \kappa$. Any subset $A$ of $\frk{M}$ of cardinality $<\lambda$ is in the closure of some $\frk{M}_i$. Then $S_1(\frk{M}_i A) = S_1(\frk{M}_i)$, so we have that a $d_{\Delta,\frk{M}_i A}$-dense subset of $S_1(\frk{M}_iA)$ is realized in $\frk{M}$. This implies that a $d_{\Delta,A}$-dense subset of $S_1(A)$ is realized in $\frk{M}$, and so $\frk{M}$ is $\Delta$-$\lambda$-saturated.
\end{proof}

\begin{cor}
If a countable theory $T$ with distortion system $\Delta$ is $\Delta$\nobreakdash-$\kappa$\nobreakdash-\hskip0pt categorical for some $\kappa \geq \aleph_1$, then every model of $T$ of density character $\kappa$ is $\Delta$-saturated.
\end{cor}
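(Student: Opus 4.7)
The plan is to combine three facts: by Corollary \ref{cor:cat-w-stab}, $T$ is $\Delta$-$\lambda$-stable for every infinite $\lambda$; by the preceding proposition, $T$ therefore has a $\Delta$-$\lambda$-saturated model of density character $\kappa$ for every uncountable regular $\lambda \leq \kappa$; and by $\Delta$-$\kappa$-categoricity every model of density character $\kappa$ is almost $\Delta$-approximately isomorphic to such a saturated model. The remaining work is to transfer saturation across an almost correlation of arbitrarily small distortion.

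Fix $\frk{M} \models T$ of density character $\kappa$, $A \subseteq \frk{M}$ with $|A| < \kappa$, $p \in S_1(A)$, and $\e > 0$. Pick a regular uncountable $\lambda \leq \kappa$ with $|A| < \lambda$ (for instance $\lambda = \max(|A|^+, \aleph_1)$) and let $\frk{N}$ be $\Delta$-$\lambda$-saturated of density character $\kappa$. Since $a_\Delta(\frk{M},\frk{N}) = 0$, choose $R \in \mathrm{acor}(\frk{M},\frk{N})$ with $\mathrm{dis}_\Delta(R) < \delta$ for a $\delta$ determined by Proposition \ref{prop:dd-estimate}(i) so that distortions of size $\delta$ and metric errors of size $\delta$ translate into $d_{\Delta,\cdot}$-distance at most a small fraction of $\e$. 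Using the Cauchy-approximation trick from the proof of Corollary \ref{cor:cat-w-stab}, we may assume that $A$ lies in the domain of $R$ and choose a correlated copy $A' \subseteq \frk{N}$ with $|A'| \leq |A| < \lambda$. Passing to a sufficiently saturated elementary extension $(\frk{M}^+,\frk{N}^+,R^+)$ of the tri-structure $(\frk{M},\frk{N},R)$ (so that $R^+$ is a correlation with the same distortion), realize $p$ by some $b^+ \in \frk{M}^+$, pick $b'^+ \in \frk{N}^+$ with $(b^+,b'^+) \in R^+$, use $\Delta$-$\lambda$-saturation of $\frk{N}$ to find $b' \in \frk{N}$ whose type over $A'$ is $d_{\Delta,A'}$-close to $\mathrm{tp}(b'^+/A')$, and finally pull $b'$ back to $b \in \frk{M}$ through $R$ (approximating $b'$ by something in the range of $R$ if needed). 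Two applications of Proposition \ref{prop:dd-estimate}(i), one on each side of the correlation, then yield $d_{\Delta,A}(p,\mathrm{tp}(b/A)) < \e$.

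The main obstacle is the precise bookkeeping involved in transferring $p$ across the almost correlation. The metric $d_{\Delta,A}$ depends on distances to the specific elements of $A$, and $R$ preserves those distances only up to its distortion, so at each step there is a controlled error to track. The key technical tool doing the heavy lifting is Proposition \ref{prop:dd-estimate}(i), which converts distortion-plus-metric-error estimates on correlations into genuine $d_{\Delta,\cdot}$-bounds on types. Once one commits to working inside the enriched tri-structure $(\frk{M},\frk{N},R)$ and exploiting its saturated extensions to bring all of the relevant witnesses into a common arena, the argument follows the pattern already established in the proof of Corollary \ref{cor:cat-w-stab} and in the separable categoricity result (Proposition \ref{prop:sat-approx}), simply lifted from cardinality $\omega$ to $\lambda$.
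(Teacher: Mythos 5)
Your proposal is correct and follows essentially the same route as the paper's proof: fix $A$, pass to a $\Delta$-$\lambda$-saturated model $\frk{N}$ of density character $\kappa$ for a suitable regular $\lambda$ (available by the preceding proposition together with the $\Delta$-$\omega$-stability supplied by Corollary~\ref{cor:cat-w-stab}), take an almost correlation of small distortion, move a realization of $p$ across the correlation inside a saturated elementary extension of the tri-structure $(\frk{M},\frk{N},R)$, invoke $\Delta$-$\lambda$-saturation on the $\frk{N}$ side, and pull the result back, with Proposition~\ref{prop:dd-estimate}(i) controlling the $d_{\Delta,\cdot}$-errors on both crossings. The only cosmetic difference is that the paper keeps both $A$ and an approximating set $A'$ of domain-of-$R$ points and works with an extension $p'$ of $p$ to $S_1(A')$ rather than replacing $A$ outright, but the error analysis and the overall structure match what you describe.
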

\begin{proof}
Let $\frk{M}$ be a model of $T$ of density character $\kappa\geq \aleph_1$. For any regular uncountable $\lambda \leq \kappa$, let $\frk{N}$ be a $\Delta$-$\lambda$-saturated model of $T$ of density character $\kappa$. Let $A\subset \frk{M}$ be any subset of cardinality $<\lambda$. Pick $p \in S_1(A)$ and $\e > 0$. Find a $\delta > 0$ small enough that $\delta_\Delta(\mathrm{tp}(ab),\mathrm{tp}(ce)) < \delta$ then $|d(a,b)-d(c,e)| < \frac{\e}{3}$.
%Find $\delta >0$ according to Proposition \ref{prop:dd-estimate} part \textit{(i)}.
Find an almost correlation $R$ between $\frk{M}$ and $\frk{N}$ with distortion $<\frac{\e}{3} \downarrow \delta$. Every element of $A$ is a metric limit of points in the domain of $R$. Let $A^\prime$ be a set containing a sequence limiting to each element of $A$ (note that we still have $|A^\prime|< \lambda$) and let $p^\prime$ be some extension of $p$ to $S_1(A^\prime)$. Go to a large enough elementary extension of $(\frk{M},\frk{N},R)$ that $p^\prime$ is realized over $A^\prime$ by some $m$ and such that $R$ is a correlation. Let $B\subseteq \frk{N}$ be correlated to $A^\prime$ by $R$ so that $|B|< \lambda$, and let $e$ be correlated to $m$ in the elementary extension. Let $q$ be $\mathrm{tp}(e/B)$. 

By $\Delta$-$\lambda$-saturation, there is some type $r \in S_1(B)$ such that $\frk{N}$ realizes $r$ with some $e^\prime$ and $d_{\Delta,B}(q,r) < \frac{\e}{5}$. Find $e^{\prime \prime}$ such that $d(e^\prime,e^{\prime \prime})<\frac{\e}{3}$ and such that $e^{\prime \prime}$ is correlated to some $m^\prime \in \frk{M}$ by $R$. 

Just like in the proof of Corollary \ref{cor:cat-w-stab}, we have that 
\begin{align*}
d_{\Delta,A}(p^\prime,\mathrm{tp}(m^\prime/A^\prime)) &\leq \delta^\omega_{D(\Delta(A),c)}(p^\prime,q)\\
&+d(e^\prime,e^{\prime \prime})
\\&+\delta^\omega_{D(\Delta(A),c)}(\mathrm{tp}(e^{\prime \prime}/B),\mathrm{tp}(m^\prime/A^\prime)),
\end{align*}
 where we're thinking of $A$ and $B$ as $\omega$-tuples when computing $\delta^\omega_{D(\Delta(A),c)}$. So we have that $d_{\Delta,A}(p^\prime,\mathrm{tp}(m^\prime/A^\prime)) < \frac{\e}{3} + \frac{\e}{3} + \frac{\e}{3} = \e$. 

Since we can do this for any $A$ with $|A| < \lambda$, any $p\in S_1(A)$, and any $\e>0$, we have that $\frk{M}$ is $\Delta$-$\lambda$-saturated. Since we can do this for any regular $\lambda \leq \kappa$, we have that $\frk{M}$ is $\Delta$-saturated. 
\end{proof}

\begin{defn}
We say that $T$ is \emph{weakly $\Delta$-$\kappa$-categorical} if every model of $T$ of density character $\kappa$ is $\Delta$-saturated.
\end{defn}

It's immediate from Proposition \ref{prop:EM-thin} that a weakly $\Delta$-$\kappa$-categorical theory for some uncountable $\kappa$ is $\Delta$-$\omega$-stable.

\begin{defn}
Let $T$ be a complete theory and $\Delta$ a distortion system for $T$.
\begin{itemize}
    \item For any parameter set $A$, a \emph{$\Delta$-construction over $A$} is a sequence $\{b_i\}_{i<\lambda}$ such that for every $i<\lambda$, $\mathrm{tp}(b_i/Ab_{<i})$ is $d_{\Delta,Ab_{<i}}$-atomic.
    \item A model $\frk{M}\models T$ is \emph{$\Delta$-constructible over $A$} if $A\subseteq \frk{M}$ and there is an enumeration of a dense subset of $\frk{M}$ which is a $\Delta$-construction over $A$. \qedhere
\end{itemize}
\end{defn}

\begin{prop}
For any complete theory $T$, $\Delta$, a distortion system for $T$, and parameter set $A$, if $\{b_i\}_{i<\lambda}$ is a $\Delta$-construction over $A$, then for any finite tuple $\bar{b} \in \{b_i\}$, $\mathrm{tp}(\bar{b}/A)$ is $d_{\Delta,A}$-atomic.
\end{prop}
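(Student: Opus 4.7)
My plan is to prove by transfinite induction on $j \leq \lambda$ the strengthened statement that every finite subtuple $\bar{b}$ of $(b_i)_{i<j}$ has $d_{\Delta,A}$-atomic type over $A$; the proposition is then the case $j = \lambda$. The base case $j=0$ is vacuous, and the limit case is immediate: any such $\bar{b}$ involves only finitely many indices, all bounded by some $j_0 < j$, so the inductive hypothesis at $j_0$ applies directly.

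The substance is the successor case $j = j_0 + 1$. Fix $\bar{b}$ a finite subtuple of $(b_i)_{i \leq j_0}$; if $b_{j_0}$ does not occur in $\bar{b}$ the inductive hypothesis gives the result, so assume it does. After reordering coordinates (which preserves $d_{\Delta,A}$-atomicity) and using Proposition~\ref{prop:atom-iff}(ii) together with (v) to absorb repetitions---a duplicated occurrence of an element $a$ adjoins a type isolated by $d(x,a)=0$, which is $d$-atomic and hence by (v) also $d_{\Delta,\cdot}$-atomic, after which (ii) shows atomicity is preserved---it suffices to prove that $\mathrm{tp}(\bar{c}\,b_{j_0}/A)$ is $d_{\Delta,A}$-atomic, where $\bar{c}$ is some finite subtuple of $(b_i)_{i<j_0}$. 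Let $\varphi$ be an $Ab_{<j_0}$-formula witnessing $d_{\Delta,Ab_{<j_0}}$-atomicity of $\mathrm{tp}(b_{j_0}/Ab_{<j_0})$. Because the extended formulas in use involve only countably many parameters, there is a countable $E \subseteq (b_i)_{i<j_0}$ that contains $\bar{c}$ together with every parameter of $\varphi$ drawn from $(b_i)_{i<j_0}$; a direct unfolding of the atomicity characterization---lift any $q \in S_1(AE)$ to an extension $\tilde{q} \in S_1(Ab_{<j_0})$ and observe that $AE$-formulas have the same values on $q$ and $\tilde{q}$---shows that the same $\varphi$ witnesses $d_{\Delta,AE}$-atomicity of $\mathrm{tp}(b_{j_0}/AE)$.

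Enumerate $E$ as an $\omega$-sequence $e_0, e_1, \ldots$ and apply Lemma~\ref{lem:atom-iff-w} with $C = A$, $\bar{a} = b_{j_0}$, and sequence $\{e_i\}_{i<\omega}$. Its first hypothesis---$d_{\Delta,A}$-atomicity of $\mathrm{tp}(e_{<k}/A)$ for each $k<\omega$---is exactly the inductive hypothesis at stage $j_0$ applied to the finite subtuple $e_{<k}$ of $(b_i)_{i<j_0}$, and its second hypothesis is what the preceding paragraph established. The lemma therefore yields that $\mathrm{tp}(b_{j_0}\,e_{<k}/A)$ is $d_{\Delta,A}$-atomic for every $k<\omega$; choosing $k$ large enough that $\bar{c} \subseteq e_{<k}$ and applying Proposition~\ref{prop:atom-iff}(iii) to pass to a subtuple finishes the successor case. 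The main obstacle throughout is precisely this restriction-to-countable maneuver: a $\Delta$-construction directly delivers atomicity of $\mathrm{tp}(b_{j_0}/\cdot)$ only over the full, potentially very large, set $Ab_{<j_0}$, which lies outside the countable setting of Lemma~\ref{lem:atom-iff-w}, but because formulas use only countably many parameters we can cut down to a manageable $E$ without losing the witness.
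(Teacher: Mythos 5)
Your proof is correct and follows essentially the same route as the paper's: induct on the construction, cut the parameter set $Ab_{<j_0}$ down to a countable set $E$ using the countability of parameters in the witnessing formula, and then apply Lemma \ref{lem:atom-iff-w} together with Proposition \ref{prop:atom-iff}. You in fact spell out the countable-reduction step (and the permutation/repetition bookkeeping) more explicitly than the paper does, but the underlying argument is the same.
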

\begin{proof}
We have already done most of the work in Proposition \ref{prop:atom-iff} and Lemma \ref{lem:atom-iff-w}. We just need to apply it inductively. Assume that we're shown that for any $i_0 < i_1 <  \dots < i_k < \alpha$ that $\mathrm{tp}(b_{i_0}b_{i_1}\dots b_{i_k} / A)$ is $d_{\Delta,A}$-atomic.  Let $j_0 < j_1 < \dots < j_\ell \leq \alpha$ be any tuple of indices and consider $b_{j_0}b_{j_1}\dots b_{j_k}$. By Proposition  \ref{prop:atom-iff} we know there exists a countable set of elements of the form $b_i$ with $i<\alpha$ such that $\mathrm{tp}(b_{j_0}b_{j_1}\dots b_{j_k}/AB)$ is $d_{\Delta,A}$-atomic. By Lemma \ref{lem:atom-iff-w} this implies that $\mathrm{tp}(b_{j_0}b_{j_1}\dots b_{j_k}/A)$ is $d_{\Delta,A}$-atomic, as required.
\end{proof}

\begin{prop}
Fix $T$, a countable complete $\Delta$-$\omega$-stable theory with non-compact models, for $\Delta$, a distortion system for $T$. If $\frk{M}\models T$ with $\dc \frk{M} = \kappa \geq \aleph_1$, then for any regular $\lambda \leq \kappa$, $\frk{M}$ has arbitrarily large elementary extensions $\frk{N}$ such that for any $A\subset \frk{M}$ with $|A| < \lambda$ the set of types realized by $\frk{N}$ in $S_1(A)$ is contained in the $d_{\Delta,A}$-closure of the set of types realized by $\frk{M}$ in $S_1(A)$.
\end{prop}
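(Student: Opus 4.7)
The argument is a $\Delta$-sensitive adaptation of the Ehrenfeucht--Mostowski construction behind Proposition~\ref{prop:EM-thin}. Since $T$ is $\Delta$-$\omega$-stable it is $\Delta$-t.t.\ and (by the preceding proposition) classically stable, so indiscernibles can be extracted over $\frk{M}$; moreover $d_{\Delta,B}$-atomic types are dense in $S_1(B)$ for every parameter set $B$ by Proposition~\ref{prop:w-dense}.

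For a prescribed cardinal $\mu\geq\kappa$, I would first use stability to extract a $\Delta(\frk{M})$-indiscernible sequence $I=(c_\alpha)_{\alpha<\mu}$ over $\frk{M}$ whose common type $p\in S_1(\frk{M})$ is $d_{\Delta,\frk{M}}$-atomic and lies in the $d_{\Delta,\frk{M}}$-closure of $R=\{\mathrm{tp}(m/\frk{M}):m\in\frk{M}\}$. Such a choice is possible by combining Proposition~\ref{prop:w-dense} with the observation that, since $\frk{M}$ has density $\kappa$, $R$ is $d_{\Delta,\frk{M}}$-dense in the set of atomic types approximable in $\frk{M}$ (otherwise one could diagonally build a $(d_{\Delta,\frk{M}},\e)$-perfect tree, contradicting $\Delta$-t.t.). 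Then I would let $\frk{N}$ be the completion of a $\Delta$-construction over $\frk{M}\cup I$, which exists by iterated use of Proposition~\ref{prop:w-dense}; by Lemma~\ref{lem:atom-iff-w}, each finite tuple in the dense constructible core has a $d_{\Delta,\frk{M}\cup F}$-atomic type over $\frk{M}\cup F$ for some finite $F\subseteq I$. The density character of $\frk{N}$ is at least $\mu$.

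To verify the approximation property, fix $A\subset\frk{M}$ with $|A|<\lambda$ and $n\in\frk{N}$. Approximate $n$ in $d$ by an element $n'$ of the constructible core (any $d$-error is absorbed by the target $\e$), so that $\mathrm{tp}(n'/\frk{M}\cup F)$ is $d_{\Delta}$-atomic for some finite $F\subseteq I$. By $\Delta(\frk{M})$-indiscernibility, every $c\in F$ satisfies $\mathrm{tp}(c/A)=p\upharpoonright A$, which by the choice of $p$ and monotonicity of $d_{\Delta}$ under restriction is $d_{\Delta,A}$-close to a type realized by some $m_c\in\frk{M}$. Replacing each $c\in F$ by such an $m_c$---and converting the $d_{\Delta,A}$-closeness into a correlation of small distortion fixing $A$ via Proposition~\ref{prop:dd-estimate}---and then feeding the resulting move into the atomicity modulus for $\mathrm{tp}(n'/\frk{M}\cup F)$ produces an element of $\frk{M}$ whose type over $A$ is $d_{\Delta,A}$-close to $\mathrm{tp}(n/A)$.

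The main obstacle is this last step: managing simultaneously the two layers of $d_{\Delta,\cdot}$-atomicity---for $n'$ over $\frk{M}\cup F$ and for $p\upharpoonright A$ over $A$---and propagating a single $d_{\Delta,A}$-approximation through both. This requires bookkeeping analogous to the proof of Lemma~\ref{lem:atom-iff-w}, in which several moduli of uniform continuity are composed so that the final distance between a realized type $\mathrm{tp}(m/A)$ and $\mathrm{tp}(n/A)$ remains below the prescribed $\e$.
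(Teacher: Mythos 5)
Your high-level plan (realize a carefully chosen type, $\Delta$-construct a model over the result, and then show types over small parameter sets descend) is a reasonable reading of what the proposition is asking, but it diverges from the paper's argument in a way that leaves serious gaps.

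The crucial unjustified step is your choice of $p$. You want a single type $p\in S_1(\frk{M})$ that is simultaneously $d_{\Delta,\frk{M}}$-atomic, the common type of an indiscernible sequence, and in the $d_{\Delta,\frk{M}}$-closure of the set $R$ of realized types. The justification you offer---that otherwise one could ``diagonally build a $(d_{\Delta,\frk{M}},\e)$-perfect tree''---doesn't hold up: failure of $R$ to be $d_{\Delta,\frk{M}}$-dense in some set produces a single far-away type, not an $\e$-separated tree, and the phrase ``set of atomic types approximable in $\frk{M}$'' is never pinned down. Moreover, asking for $p$ to be $d_{\Delta,\frk{M}}$-atomic \emph{in all of $S_1(\frk{M})$} is much stronger than what the paper can supply and stronger than what $\Delta$-$\omega$-stability gives. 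The paper's actual move is subtler: it fixes a maximal $({>}\e)$-separated set $Q\subseteq\frk{M}$ of size $\geq\lambda$, forms the closed ``accumulation'' set $X\subseteq S_1(\frk{M})$ of types every topological neighborhood of which meets $Q$ in $\geq\lambda$ points, and only asks for $p$ to be $d_{\Delta,\frk{M}}$-atomic \emph{relative to $X$}. Such a $p$ exists by Proposition~\ref{prop:w-dense}. It need not be in the $d_{\Delta,\frk{M}}$-closure of $R$ at all---indeed the paper observes $X\cap\frk{M}=\varnothing$---and the atomicity is the relative, not the absolute, notion.

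The second gap is the final ``substitution'' step, which you characterize as bookkeeping but which is really the heart of the proof. Once $p$ is atomic-in-$X$ with witnessing formula $\chi$, and $\mathrm{tp}(b/\frk{M}a)$ is $d_{\Delta,\frk{M}a}$-atomic with witnessing formula $\varphi$, the paper composes these moduli into a finite family of closed conditions, uses the definition of $X$ to show that each ``bad'' set $H_i\cap Q$ has cardinality $<\lambda$, and then invokes regularity of $\lambda$ to extract an $a'\in Q\subset\frk{M}$ (and thence $b'\in\frk{M}$) satisfying all the ``good'' conditions simultaneously. This is precisely where the hypothesis that $\lambda$ is regular enters, and where the accumulation property of $X$ does its work. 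Your indiscernibility-based replacement $c\mapsto m_c$ has no mechanism to produce these cardinality bounds: $\Delta(\frk{M})$-indiscernibility only tells you the $c$'s all have the same type over $\frk{M}$, not that elements of $Q$ near $p$ satisfy the finitely many formulas $\psi_i\dotdiv\varphi$ needed to transport $b$ into $\frk{M}$. In addition, the atomicity witness for $\mathrm{tp}(n'/\frk{M}\cup I)$ may involve countably many members of $I$, so your appeal to a finite $F\subseteq I$ and finite indiscernibility is not automatically available; the paper sidesteps this entirely by constructing over $\frk{M}a$ with a single new element and iterating.

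In short: the key missing ideas are (a) working with a relative atomic type in the thick set $X$ rather than an absolute one in the closure of $R$, and (b) the regularity-of-$\lambda$ counting argument that converts atomicity-in-$X$ into the $d_{\Delta,E}$-approximation over every small $E\subset\frk{M}$. Neither the indiscernible sequence nor the claimed density of $R$ substitutes for these.
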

\begin{proof}
Fix regular $\lambda \leq \kappa$. Find an $\e >0$ such that $\ent_{>\e} (\frk{M},d) \geq \lambda$, which must exist because $\lambda$ is regular. Let $Q$ be a maximal $({>}\e)$-separated subset of $\frk{M}$ of cardinality $\geq \lambda$. Let $X \subset S_1(\frk{M})$ be the set of all types $p$ such that for every open neighborhood $U \ni p$, $|U\cap Q| \geq \lambda$. Note that $X$ is a closed set and by compactness is non-empty. Also note that since $Q$ was chosen to be maximal, $\frk{M} \cap X = \varnothing$, since for any realized type $p$ there is some $a\in \frk{M}$ such that $d(p,a)<\e$, which is an open neighborhood of $p$ whose intersection with $Q$ has cardinality $1$, which is in particular less than $\lambda$.

Now since $T$ is $\Delta$-$\omega$-stable, $d_{\Delta,\frk{M}}$-atomic-in-$X$ types are dense in $X$. Let $p$ be some $d_{\Delta,\frk{M}}$-atomic-in-$X$ type. Let $a$ be a realization of that type. By Proposition \ref{prop:w-dense}, there exists a $\Delta$-constructible model $\frk{N}$ over $\frk{M}a$ (specifically, $d_{\Delta,A}$-atomic types are dense in every $S_1(A)$). Note that $\frk{N}$ is a proper elementary extension of $\frk{M}$.

Let $b$ be some element of $\frk{N}$ and let $E\subset \frk{M}$ be some set of parameters of cardinality $\mu < \lambda$. Since $\frk{N}$ is $\Delta$-constructible over $\frk{M}a$, $\mathrm{tp}(b/\frk{M}a)$ is $d_{\Delta,\frk{M}a}$-atomic. Let $\varphi(x,\bar{m},a)$ be a formula (with $\bar{m}$ possibly an $\omega$-tuple) such that $\frk{N}\models \varphi(b,\bar{m},a) \leq 0$ and for every type $q \in S_1(\frk{M}a)$, $d_{\Delta,\frk{M}a}(q,\mathrm{tp}(b/\frk{M}a)) \leq \varphi(q,\bar{m},a)$. In particular what this means is that if we think of $c$ as a fresh constant symbol, then for any $D(\Delta(\frk{M}a),c)$-sentence $\psi(c,\bar{m},a)$, $|\psi^\frk{N}(b,\bar{m},a)-\psi(q,\bar{m},a)|\leq \varphi(q,\bar{m},a)$. We may assume that $\bar{m} \in E$. Let $\{\psi_i^0(c,\bar{e}_i)\}_{i<\mu}$ be a collection of $D(\Delta(\frk{E}),c)$-sentences which are dense in the uniform norm (treating $c$ as a variable). Then for each $i<\mu$, let $\psi_i(c,\bar{e}_i) =|\psi_i^0(c,\bar{e}_i) - \psi_i^{0,\frk{N}}(b,\bar{e}_i)|$ and note that each $\psi_i(c,\bar{e}_i)$ is also a $D(\Delta(E),c)$-sentence. Now we have that for each $i<\mu$ and for any $q \in S_1(\frk{M}a)$, $\frk{N} \models \sup_x \psi_i(x,\bar{e}_i) \dotdiv \varphi(x,\bar{m},a)$ and of course $\frk{N} \models \inf_x \varphi(x,\bar{m},a)$, so these two statements are parts of $\mathrm{tp}(a/\frk{M})=p$.  

Let $\chi$ be a $\frk{M}$-formula such that $\chi(p) = 0$ and for every $q\in X$, $d_{\Delta,\frk{M}}(p,q) \leq \chi(q)$, so in particular for any $D(\Delta(\frk{M}),c)$-sentence $\theta(c,\bar{m})$, we have $|\theta(p,\bar{m})-\theta(q,\bar{m})|\leq \chi(q)$. Again by extending $E$ we may assume that $\chi$ is an $E$-formula.

Now pick $\delta>0$, and find a $\gamma > 0$ such that if $d_{\Delta,\frk{M}}(\mathrm{tp}(a^\prime b^\prime/\frk{M}),\mathrm{tp}(a^{\prime \prime}b^{\prime \prime}/\frk{M})) \leq \gamma$ then $|\varphi(b^\prime,\bar{m},a^\prime)-\varphi(b^{\prime\prime},\bar{m},a^{\prime\prime})| < \frac{\delta}{3}$ and note that this implies that if $d_{\Delta,\frk{M}}(\mathrm{tp}(a^\prime/\frk{M}),\allowbreak\mathrm{tp}(a^{\prime \prime}/\frk{M})) \allowbreak \leq \gamma$ then for any $b^\prime$, $|\varphi(b^\prime,\bar{m},a^\prime)-\varphi(b^\prime,\bar{m},a^{\prime\prime})| < \frac{\delta}{3}$ and also the same thing for formulas of the form $\sup_x \psi_i (x,\bar{e}_i) \dotdiv \varphi(x,\bar{m},y)$.  Consider the sets
$$H_{-1} = Q \cap \cset{\chi < \gamma} \setminus \cset*{\inf_x \varphi(x,\bar{m},y) < \frac{\delta}{2}}$$ and $$H_i = Q \cap \cset{\chi < \gamma} \setminus \cset*{\sup_x \psi_i (x,\bar{e}_i) \dotdiv \varphi(x,\bar{m},y) < \frac{\delta}{2}}$$ for $i<\mu$ (as subsets of $S_1(\frk{M})$). Note that since $\cset{\chi < \gamma}$ is an open neighborhood of $p$, it must be the case that $|Q\cap \cset{\chi < \gamma}|\geq \lambda$. Now assume that $|H_i|$ is $\geq \lambda$. This implies that $X\cap \mathrm{cl}_{S_1(\frk{M})} H_i \neq \varnothing$, so in particular there is some $q\in X$ such that $\chi(q) \leq \gamma$ and yet either $\inf_x \varphi(x,\bar{m},q) \geq \frac{\delta}{2}$ or $\sup_x \psi_i (x,\bar{e}_i) \dotdiv \varphi(x,\bar{m},q) \geq \frac{\delta}{2}$, which contradicts the choice of $\gamma$, so it must be the case that $|H_i|< \lambda$ for every $i$ with $-1 \leq i < \mu$. Therefore since $\lambda$ is a regular cardinal we must have that $$\left| Q \cap \cset{\chi < \gamma} \cap \cset*{\inf_x \varphi(x,\bar{m},y) < \frac{\delta}{2}} \cap \bigcap_{i<\mu} \cset*{\sup_x \psi_i (x,\bar{e}_i) \dotdiv \varphi(x,\bar{m},y) < \frac{\delta}{2}} \right| \geq \lambda,$$
so there exists some $a^\prime \in Q$ such that $\frk{M} \models \inf_x \varphi(x,\bar{m},a^\prime) < \frac{\delta}{2}$ and $\sup_x \psi_i (x,\bar{e}_i) \allowbreak \dotdiv \varphi(x,\bar{m},a^\prime) < \frac{\delta}{2}$ for each $i<\mu$. Let $b^\prime$ be an element of $\frk{M}$ such that $\frk{M} \models \varphi(b^\prime,\bar{m},a^\prime) <\frac{\delta}{2}$. Then we have that for each $i<\mu$, $\frk{M} \models \psi_i (b^\prime,\bar{e}_i) \dotdiv \varphi(b^\prime,\bar{m},a^\prime) < \frac{\delta}{2}$, so together this implies that $\frk{M} \models \psi_i(b^\prime,\bar{e}_i) < \delta$ for every $i<\mu$. By the choice of the $\psi_i$'s, this implies that $d_{\Delta,E}(\mathrm{tp(b^\prime/E)},\mathrm{tp}(b/E)) \leq \delta$. Since we can do this for any $\delta > 0$, we have that the set of types in $S_1(E)$ realized in $\frk{N}$ is in the $d_{\Delta,E}$-metric closure of the set of types in $S_1(E)$ realized in $\frk{M}$. 

Now we are free to iterate this process to form arbitrarily large elementary extensions $\frk{N}^\prime \succ\frk{M}$ such that for any set of parameters $E \subset \frk{M}$ with $|E| < \lambda$, if $\frk{N}^\prime$ realizes some type in $S_1(E)$, then it is in the $d_{\Delta,E}$-metric closure of the types in $S_1(E)$ realized in $\frk{M}$.
\end{proof}

\begin{cor} \label{cor:downwards-Morley}
Fix $T$ a countable complete theory with non-compact models and $\Delta$, a distortion system for $T$. For any $\kappa \geq \aleph_1$, if $T$ is weakly $\Delta$\nobreakdash-$\kappa$\nobreakdash-categorical, then it is weakly $\Delta$-$\lambda$-categorical for every $\lambda \geq \aleph_1$ with $\lambda \leq \kappa$.
\end{cor}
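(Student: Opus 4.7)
The plan is a downward-categoricity argument in the style of Morley's theorem: given a model of density character $\lambda$, produce an elementary extension that is $\Delta$-saturated of density character $\kappa$, but which is simultaneously ``thin'' over small subsets of the original model, so that any type approximately realized above descends to an approximate realization below. Note first that weak $\Delta$-$\kappa$-categoricity at uncountable $\kappa$ gives $\Delta$-$\omega$-stability (as remarked right after the definition of weak $\Delta$-$\kappa$-categoricity), so the preceding proposition is applicable.

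Fix $\lambda$ with $\aleph_1 \leq \lambda \leq \kappa$ and let $\frk{N} \models T$ with $\dc \frk{N} = \lambda$. I want to show $\frk{N}$ is $\Delta$-saturated, so fix $A \subseteq \frk{N}$ with $|A| < \lambda$, a type $p \in S_1(A)$, and $\e > 0$; I will produce a realization in $\frk{N}$ of some $q \in S_1(A)$ with $d_{\Delta,A}(p,q) < \e$. Let $\mu = \max(\aleph_1, |A|^+)$, a regular cardinal with $|A| < \mu \leq \lambda$ (since $\lambda \geq \aleph_1$ and $|A| < \lambda$).

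Apply the preceding proposition to $\frk{N}$ (playing the role of the proposition's $\frk{M}$, so the proposition's $\kappa$ equals our $\lambda$) with the proposition's regular cardinal set to $\mu$. This yields an elementary extension $\frk{N}^\prime \succeq \frk{N}$ of arbitrarily large density character (in particular at least $\kappa$) such that, for every $B \subseteq \frk{N}$ with $|B| < \mu$, the set of types in $S_1(B)$ realized by $\frk{N}^\prime$ lies in the $d_{\Delta,B}$-closure of the set of types realized by $\frk{N}$. By downward Löwenheim--Skolem pick $\frk{N}^{\prime\prime}$ with $\frk{N} \preceq \frk{N}^{\prime\prime} \preceq \frk{N}^\prime$ and $\dc \frk{N}^{\prime\prime} = \kappa$; the same thinness property over $\frk{N}$ still holds for $\frk{N}^{\prime\prime}$ in place of $\frk{N}^\prime$, simply because any type realized in $\frk{N}^{\prime\prime}$ is realized in $\frk{N}^\prime$.

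Now weak $\Delta$-$\kappa$-categoricity says $\frk{N}^{\prime\prime}$ is $\Delta$-saturated, so (using $|A| < \lambda \leq \kappa$) there exists $q \in S_1(A)$ realized in $\frk{N}^{\prime\prime}$ with $d_{\Delta,A}(p,q) < \e/2$. Because $|A| < \mu$, the closure property for $\frk{N}^{\prime\prime}$ yields a type $r \in S_1(A)$ realized in $\frk{N}$ itself with $d_{\Delta,A}(q,r) < \e/2$, and the triangle inequality gives $d_{\Delta,A}(p,r) < \e$. Since $A$, $p$, and $\e$ were arbitrary, $\frk{N}$ is $\Delta$-saturated, finishing the proof. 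No essential obstacle arises: all the real work was done in the preceding proposition, and the only delicate point here is choosing $\mu$ to be a regular cardinal strictly above $|A|$ but still $\leq \lambda$, which is possible precisely because $\lambda$ itself is uncountable.
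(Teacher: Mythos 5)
Your proof is correct and takes essentially the same route as the paper: both hinge on the immediately preceding proposition supplying elementary extensions that only realize types over small parameter sets lying in the $d_{\Delta,A}$-closure of types already realized below. The paper phrases it as a contrapositive (a non-$\Delta$-saturated model of smaller density character propagates upward to a non-$\Delta$-saturated model of density character $\kappa$), while you run the direct argument (grow $\frk{N}$ to density character $\kappa$, invoke $\Delta$-saturation there, and pull the approximate realization back down via the closure property); these are the same argument up to contraposition, with your version making the downward Löwenheim--Skolem step and the choice of the regular cardinal $\mu$ slightly more explicit.
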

\begin{proof}
Assume that $T$ fails to be weakly $\Delta$-$\kappa$-categorical. Let $\frk{M}$ be a model of density character $\kappa$ that fails to be $\Delta$-$\kappa$-saturated. Let $A\subset \frk{M}$ be a set of parameters with $|A| < \kappa$ such that for some type $p \in S_1(A)$ and some $\e>0$, every type $q \in S_1(A)$ realized in $\frk{M}$ has $d(p,q)\geq \e$. Then by the previous proposition $\frk{M}$ has arbitrarily large elementary extensions with the same property, so in particular for any $\lambda > \kappa$, $T$ has a model of density character $\lambda$ that fails to be $\Delta$-saturated, so $T$ is not weakly $\Delta$-$\lambda$-categorical.
\end{proof}

\begin{thm}
Fix $T$ a countable complete theory with non-compact models and $\Delta$, a distortion system for $T$. For any $\kappa \geq \aleph_1$, if $T$ is weakly $\Delta$\nobreakdash-$\kappa$\nobreakdash-categorical, then it is weakly $\Delta$-$\lambda$-categorical for every $\lambda \geq \aleph_1$.
\end{thm}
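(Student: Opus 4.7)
The downward range $\aleph_1 \leq \lambda \leq \kappa$ is exactly Corollary \ref{cor:downwards-Morley}, so the plan is to handle $\lambda > \kappa$ separately by a direct argument. Since weak $\Delta$-$\kappa$-categoricity for uncountable $\kappa$ implies $\Delta$-$\omega$-stability, and hence $\Delta$-totally transcendental, Proposition \ref{prop:w-dense} is available.

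Suppose for contradiction that $\frk{M} \models T$ has $\dc \frk{M} = \lambda > \kappa$ but fails to be $\Delta$-saturated: fix $A \subseteq \frk{M}$ with $|A| < \lambda$, a type $p \in S_1(A)$, and $\e > 0$ such that $d_{\Delta,A}(p, \mathrm{tp}(b/A)) \geq \e$ for every $b \in \frk{M}$. By Proposition \ref{prop:w-dense}, choose a $d_{\Delta,A}$-atomic type $p_1 \in S_1(A)$ with $d_{\Delta,A}(p, p_1) < \e/3$, and let $\varphi(x, \bar{a})$ be an $A$-formula witnessing this atomicity. By the convention that a formula in our extended continuous logic uses at most countably many constants, $\bar{a} \subseteq A$ is countable, so $|\bar{a}| \leq \aleph_0 < \kappa$.

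By Löwenheim-Skolem, together with the non-compactness of $T$ to ensure enough elements are available in $\frk{M}$, find $\frk{N}_0 \preceq \frk{M}$ of density character exactly $\kappa$ with $\bar{a} \subseteq \frk{N}_0$. Weak $\Delta$-$\kappa$-categoricity then makes $\frk{N}_0$ $\Delta$-saturated, and since $|\bar{a}| < \kappa$, $\frk{N}_0$ realizes a $d_{\Delta,\bar{a}}$-dense subset of $S_1(\bar{a})$. The formula $\varphi(\cdot, \bar{a})$ is continuous on the topometric space $(S_1(\bar{a}), d_{\Delta, \bar{a}})$ and hence uniformly continuous with respect to $d_{\Delta, \bar{a}}$ by the cited fact from \cite{BenYaacov2008}; since $\varphi(p_1|_{\bar{a}}) = 0$, one can choose $b \in \frk{N}_0$ with $d_{\Delta,\bar{a}}(\mathrm{tp}(b/\bar{a}), p_1|_{\bar{a}})$ small enough to guarantee $\varphi(b, \bar{a}) < \e/3$. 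The atomicity inequality then gives $d_{\Delta,A}(p_1, \mathrm{tp}(b/A)) \leq \varphi(b, \bar{a}) < \e/3$, and the triangle inequality yields $d_{\Delta,A}(p, \mathrm{tp}(b/A)) < 2\e/3 < \e$. Since $b \in \frk{N}_0 \subseteq \frk{M}$, this contradicts the choice of $A$, $p$, and $\e$.

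The main obstacle, resolved by $\Delta$-total transcendence, is the parameter reduction: the atomicity formula $\varphi$ for $p_1$ involves only countably many specific parameters $\bar{a}$, which lets the critical approximation step take place inside an elementary submodel $\frk{N}_0$ of density character $\kappa$ where the categoricity hypothesis bites, even when $|A|$ itself is much larger than $\kappa$. Everything else in the argument---the choice of $p_1$, the uniform continuity of $\varphi$ with respect to $d_{\Delta,\bar{a}}$, and the triangle inequality bookkeeping---is straightforward once this reduction is in place.
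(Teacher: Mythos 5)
Your strategy is genuinely different from the paper's. The paper's proof of upward transfer is indirect: it first uses Corollary \ref{cor:downwards-Morley} to show that the set of uncountable $\mu$ for which $T$ is weakly $\Delta$-$\mu$-categorical is an initial segment of the uncountable cardinals, and then rules out a proper initial segment by an intricate construction with indiscernible sequences that transports a failure of $\Delta$-saturation at a carefully chosen large cardinal down to density character $\aleph_1$. You propose a direct upward argument localized to a $\kappa$-sized elementary submodel. If it worked it would be a substantial simplification, but there is a gap at the first step.

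The appeal to Proposition \ref{prop:w-dense} does not deliver what you need. That proposition says that, for any closed $F\subseteq S_n(A)$, the $d_{\Delta,A}$-atomic-\emph{in-$F$} types are \emph{topologically} dense in $F$. It does not say that types which are $d_{\Delta,A}$-atomic in all of $S_1(A)$ are $d_{\Delta,A}$-\emph{metrically} dense in $S_1(A)$. Your $p_1$ must satisfy two things at once: $d_{\Delta,A}(p,p_1)<\e/3$ (metric closeness) and the atomicity inequality $d_{\Delta,A}(p_1,q)\downarrow 1\leq\varphi(q)$ for \emph{every} $q\in S_1(A)$ (full atomicity). Neither instantiation of Proposition \ref{prop:w-dense} gives both. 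Taking $F=S_1(A)$ yields a fully atomic type, but only topologically near $p$, which gives no bound on $d_{\Delta,A}(p,p_1)$. Taking $F=B^{d_{\Delta,A}}_{\leq\e/3}(p)$ (topologically closed since $d_{\Delta,A}$ is lower semi-continuous) yields a $p_1$ that is metrically close, but now the witness $\varphi$ only satisfies the atomicity inequality for $q\in F$ --- and the realized types $\mathrm{tp}(b/A)$ you produce from $\frk{N}_0$ are exactly the ones you cannot assume lie in $F$, since putting them in $F$ is the conclusion you are after.

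To see the gap is not merely formal, specialize to $\Delta$ being the collection of all formulas in a discrete theory, so that $d_{\Delta,A}$ is the $\{0,1\}$-valued metric on $S_1(A)$. Then $d_{\Delta,A}(p,p_1)<\e/3<1$ forces $p_1=p$, and $p_1$ being $d_{\Delta,A}$-atomic forces $p$ to be topologically isolated. But a topologically isolated type over a subset of $\frk{M}$ is always realized in $\frk{M}$, contradicting the hypothesis that $\frk{M}$ omits $p$ up to $d_{\Delta,A}$-distance $\e$. So the type $p_1$ your argument requires need not exist, even though the statement being proved is true. The indiscernible-sequence machinery in the paper's proof is precisely what replaces the unavailable ``atomic type metrically near the omitted one.''
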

\begin{proof}
By the previous Corollary \ref{cor:downwards-Morley}, we know that the collection of uncountable cardinalities for which a theory $T$ is weakly $\Delta$-$\kappa$-categorical is always an initial segment of the uncountable cardinals. Assume that it is not all of them. Find $\kappa$ large enough that $T$ is not weakly $\Delta$-$\kappa$-categorical and such that for any $\lambda < \kappa$, $\lambda^\omega < \kappa$ and such that $\mathrm{cf}(\kappa)\geq \omega_1$ (such a cardinal can always be found). 

Let $\frk{M}$ be a model of $T$ of cardinality $\kappa$ which is not $\Delta$-saturated. Let $A\subset \frk{M}$ be a set of parameters with $|A| < \kappa$ such that for some type $p \in S_1(A)$ and some $\e>0$, every type $q \in S_1(A)$ realized in $\frk{M}$ has $d_{\Delta,A}(p,q)\geq \e$ (i.e.\ $A$ witnesses that $\frk{M}$ is not $\Delta$-saturated). Since for any superset $A^\prime \supseteq A$, the natural restriction map $(S_1(A^\prime),d_{\Delta,A^\prime})\rightarrow (S_1(A),d_{\Delta,A})$ is $1$-Lipschitz, we can freely pass to a larger set of parameters and preserve this condition. So we may assume that $|A|=|A|^\omega$ by passing to a larger parameter set if necessary (we can do this since we have ensured that $\lambda^\omega < \kappa$ for any $\lambda < \kappa$). Let $\lambda = |A|$. Since $T$ is $\Delta$-$\omega$-stable, it is stable and so in particular $\lambda$-stable.

Find sufficiently small $\e>0$ such that we can find $\{b_i\}_{i<\lambda^+}$, a $({>}\e)$-separated sequence of elements of $\frk{M}$. By Proposition 4.17 of \cite{ben-yaacov_2005} there exists (in the monster model) an array $\{c_i^j\}_{i<\lambda^+,j<\omega}$ such that $d(c_i^j,c_i^{j+1}) < 2^{-j}$, such that for each $j<\omega$ there is a sub-sequence $I\subseteq \lambda^+$ such that $\{c_i^j\}_{i<\lambda^+}\equiv_A \{b_i\}_{i\in I}$, and such that the sequence of limits $\{c_i^\omega\}_{i<\lambda^+}$ is an $A$-indiscernible sequence. Let $C = \{c_i^\omega\}_{i<\omega}$.

Let $\Sigma$ be a countable dense subset of the collection of finitary $D(\Delta,x)$ formulas (where $x$ is being treated as the fresh constant symbol). Formulas in $\Sigma$ are of the form $\varphi(x,\bar{y})$. The important thing is that $\Sigma$ has the property that for any set of parameters $E$, $d_{\Delta,E}(\mathrm{tp}(f_0/E),\mathrm{tp}(f_1/E)) = \sup_{\varphi \in \Sigma,\bar{e} \in E} |\varphi(f_0,\bar{e})-\varphi(f_1,\bar{e})|$. We may assume that $\Sigma$ is closed under $\varphi \mapsto (\varphi \dotdiv r)$ for each rational $r$.

$(\ast)$ Note that for each restricted $AC$-formula $\varphi(x,\bar{a},\bar{c})$ such that $\models \inf_x \varphi(x,\allowbreak\bar{a},\allowbreak\bar{c}) \leq 0$,  there is an $A$-formula $\psi_{\varphi(-,\bar{a})}(x,\bar{a}_{\varphi(-,\bar{a},\bar{c})})$ with $\psi \in \Sigma$  such that $p(x)\vdash \allowbreak \psi_{\varphi(-,\bar{a},\bar{c})}(x,\allowbreak\bar{a}_{\varphi(-,\bar{a})}) \leq 0$ and such that $$\left\{\varphi(x,\bar{a},\bar{c})<\frac{1}{2},\psi_{\varphi(-,\bar{a},\bar{c})}(x,\bar{a}_{\varphi(-,\bar{a},\bar{c})}) > \frac{\e}{2} \right\}$$ is consistent. This holds because every type realized in $\frk{M}$ has $d_{\Delta,A}$-distance $\geq \e$ from $p$ and because we can approximate $C$ with something of the form $\{b_i\}_{i\in I}$ for some $I\subseteq \lambda^+$, which is a set of elements in $\frk{M}$.

Now let $A_0 = \varnothing$, and for each $n<\omega$, given $A_n$, let $A_{n+1}$ be the collection of all $a$'s occurring in some tuple of the form $\bar{a}_{\varphi(-,\bar{a},\bar{c})}$ where $\varphi(x,\bar{a},\bar{c})$ is a restricted $A_nC$-formula. Finally let $A_\omega = \bigcup_{n<\omega}A_n$. Clearly $A_\omega$ is a countable set.

Now by construction we have that condition $(\ast)$ holds with $A_\omega$ replacing $A$. Also note that $C$ is still an indiscernible sequence over $A_\omega$ (because $A_\omega \subseteq A$). Let $C^\prime$ be an extension of $C$ to an indiscernible sequence of length $\omega_1$. Now let $\frk{N}$ be a $\Delta$\nobreakdash-constructible model over $A_\omega C^\prime$. Note that $\dc \frk{N} = \aleph_1$. If $q$ is the restriction of $p$ to $A_\omega$, we would like to argue that for every type $r\in S_1(A_\omega)$ realized in $\frk{N}$, $d_{\Delta,A_\omega}(q,r) \geq \frac{\e}{2}$. Assume that this is false and that there is some $e \in \frk{N}$ such that $d_{\Delta,A}(q,\mathrm{tp}(e/A_\omega)) \leq \delta < \frac{\e}{2}$. By construction, $\mathrm{tp}(e/A_\omega C^\prime)$ is $d_{\Delta,A_\omega C^\prime}$-atomic. This implies that there is a restricted formula $\chi(x,\bar{a},\bar{c})$ such that $\models \chi(e,\bar{a},\bar{c}) \leq 0$ and for any $f$ if $\models \chi(f,\bar{a},\bar{c}) < \frac{1}{2}$, then $d_{\Delta,A}(\mathrm{tp}(e/A_\omega C^\prime),\mathrm{tp}(f/A_\omega C^\prime)) \leq \frac{\e}{2} - \delta$. In particular this implies that $d_{\Delta,A}(\mathrm{tp}(f/A_\omega C),q)  \leq \frac{\e}{2}$. Therefore for every $A$\nobreakdash-formula $\eta(x,\bar{a})$ with $\eta \in \Sigma$ such that $q(x) \vdash \eta(x,\bar{a}) \leq 0$, we have that $\models \eta(f,\bar{a}) \leq \frac{\e}{2}$. The fact 
\[\forall x \left(\chi(x,\bar{a},\bar{c}) < \frac{1}{2} \rightarrow \eta(x,\bar{a}) \leq \frac{\e}{2}\right)\]
is an elementary property of $\bar{c}$. In particular, it can be expressed as a closed condition. By indiscernibility this is true of every tuple $\bar{c}^\prime \in C$ with the same order type as $\bar{c}$, but this is inconsistent with the modified condition $(\ast)$ (replacing $A$ with $A_\omega$) above, since this implies that $\{\chi(x,\bar{a}, \bar{c}^\prime) < \frac{1}{2}, \psi_{\chi(-,\bar{a},\bar{c}^\prime)}(x,\bar{a}_{\chi(-,\bar{a},\bar{c}^\prime)}) > \frac{\e}{2}\}$ is inconsistent (by setting $\eta(x,\bar{a})$ to $\psi_{\chi(-,\bar{a},\bar{c}^\prime)}(x,\bar{a}_{\chi(-,\bar{a},\bar{c}^\prime)})$). Therefore no such $e$ can exist and for any type $r \in S_1(A_\omega)$ realized in $\frk{N}$, $d_{\Delta,A_\omega}(q,r) \geq \frac{\e}{2}$. Therefore $\frk{N}$ is not $\Delta$-saturated and $T$ is not weakly $\Delta$-$\aleph_1$-categorical.

But this contradicts our assumption, so there is no largest uncountable $\kappa$ such that $T$ is weakly $\Delta$-$\kappa$-categorical and in fact $T$ is weakly $\Delta$-$\kappa$-categorical for all uncountable $\kappa$.
\end{proof}

\begin{quest}
If $T$ is weakly $\Delta$-$\kappa$-categorical for some $\kappa \geq \aleph_1$, does it follow that $T$ is $\Delta$-$\kappa$-categorical?
\end{quest}

In the particular case of a discrete theory with a stratified language we can resolve this positively.

\begin{prop}
Suppose $\Delta$ is a distortion system for some complete discrete theory $T$ equivalent to some stratified language $\Lcal$ and suppose that $\frk{M},\frk{N}\models T$ are $\Delta$\nobreakdash-saturated models of the same cardinality, then they are $\Delta$-approximately isomorphic.
\end{prop}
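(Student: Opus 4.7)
The plan is to reduce the statement to the standard uniqueness of saturated structures. By part (ii) of the Proposition characterizing approximate isomorphism in discrete theories via a stratified language, $\frk{M} \approxx_\Delta \frk{N}$ holds if and only if $\frk{M} \upharpoonright \Lcal_i \cong \frk{N} \upharpoonright \Lcal_i$ for every $i < \omega$. So I would fix $i$ and build an honest $\Lcal_i$-isomorphism of the reducts.

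The key claim is that $\frk{M} \upharpoonright \Lcal_i$ is $\kappa$-saturated as an $\Lcal_i$-structure, where $\kappa = |\frk{M}|$ (this equals the density character because $T$ is discrete). To prove this, fix $A \subseteq \frk{M}$ with $|A| < \kappa$ and an $\Lcal_i$-type $p \in S_1^{\Lcal_i}(A)$. Extend $p$ to a complete $\Lcal$-type $q \in S_1(A)$. Part (i) of the Proposition on discrete theories supplies an $\e_i > 0$ such that any correlation of distortion less than $\e_i$ is the graph of a bijection respecting every $\Lcal_i$-symbol; unwinding $d_{\Delta, A}$ through Proposition \ref{prop:dd-estimate} then yields an $\e_i' > 0$ such that $d_{\Delta, A}(q, \mathrm{tp}(a/A)) < \e_i'$ forces $a$ and any realization of $q$ to have identical $\Lcal_i$-types over $A$. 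Applying $\Delta$-saturation of $\frk{M}$ to $q$ produces such an $a \in \frk{M}$, which therefore realizes $p$ in $\frk{M}\upharpoonright\Lcal_i$. The same argument applies to $\frk{N}$.

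With saturation in hand, I apply the standard uniqueness theorem: $\frk{M}$ and $\frk{N}$ both model the complete theory $T$ and so are elementarily $\Lcal$-equivalent; hence their $\Lcal_i$-reducts are elementarily equivalent as $\Lcal_i$-structures, both are $\kappa$-saturated, and both have $\Lcal_i$-cardinality $\kappa$ (in a $\Delta$-saturated model of density character $\kappa$, each sort is either finite of a size fixed by $T$ or has exactly $\kappa$ elements, and these counts match across $\frk{M}$ and $\frk{N}$). The classical back-and-forth then gives $\frk{M} \upharpoonright \Lcal_i \cong \frk{N} \upharpoonright \Lcal_i$, completing the proof.

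The main obstacle is the translation in the second paragraph between smallness of $d_{\Delta, A}$ and equality of $\Lcal_i$-types over $A$. It requires unpacking $d_{\Delta, A}$ into the existence of a correlation with small distortion in which the parameters $A$ on both sides are matched exactly, and then feeding that correlation into part (i) of the discrete-theory Proposition. A technical nuisance is that the $\e$-ball in $d_{\Delta, A}$ nominally permits $A$ to shift by $\e$ via the extra $\bar{c}$-coordinates; I would handle this with the sliding trick already used in Lemma \ref{lem:sat-alt}, passing to an $\aleph_1$-saturated extension of $(\frk{M},\frk{N},R)$ so that the correlation can be taken to match parameters on the nose.
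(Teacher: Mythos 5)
Your proof is correct and takes essentially the same approach as the paper's, which simply asserts that $\frk{M}$ and $\frk{N}$ are saturated as $\Lcal_n$-structures and invokes uniqueness of saturated models; your argument supplies the missing justification for that assertion (translating smallness of $d_{\Delta,A}$ through the discrete-theory proposition into equality of $\Lcal_i$-types over $A$), which is precisely the content the paper's one-line proof leaves implicit.
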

\begin{proof}
For each $n<\omega$, $\frk{M}$ and $\frk{N}$ are saturated as $\Lcal_n$-structures, therefore they are isomorphic as $\Lcal_n$-structures. Thus they are $\Delta$-approximately isomorphic.
\end{proof}

\begin{cor} \label{cor:discrete-other-direc}
Suppose $\Delta$ is a distortion system for some complete discrete theory $T$ equivalent to some stratified language $\Lcal$. If $T$ is $\Delta$-$\kappa$-categorical for some uncountable $\kappa$ then for every uncountable $\lambda$, $T$ is $\Delta$-$\lambda$-categorical.
\end{cor}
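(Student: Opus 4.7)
The plan is to chain three results already in the paper: the promotion of $\Delta$-$\kappa$-categoricity to weak $\Delta$-$\kappa$-categoricity, the Morley-type transfer for weak $\Delta$-categoricity, and the preceding proposition specific to discrete stratified languages.

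First I would unpack the hypothesis. Since $T$ is $\Delta$-$\kappa$-categorical for some uncountable $\kappa$, the corollary stating ``$\Delta$-$\kappa$-categoricity implies every model of density character $\kappa$ is $\Delta$-saturated'' gives that $T$ is weakly $\Delta$-$\kappa$-categorical. By the transfer theorem just proven, weak $\Delta$-$\kappa$-categoricity for some uncountable $\kappa$ upgrades to weak $\Delta$-$\lambda$-categoricity for every uncountable $\lambda$. So every model of $T$ of density character $\lambda$ is $\Delta$-saturated.

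Next I would exploit discreteness. Because $T$ is a discrete theory (every predicate, including $d$, is $\{0,1\}$-valued), the metric on each sort is trivial, so a $d$-dense subset of any model is all of the model. Hence the density character of a model of $T$ coincides with its cardinality. Thus for any uncountable $\lambda$ and any two models $\frk{M},\frk{N}\models T$ of density character $\lambda$, the pair $\frk{M},\frk{N}$ consists of two $\Delta$-saturated models of the same cardinality.

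Finally, the preceding proposition says that any two $\Delta$-saturated models of $T$ of the same cardinality are $\Delta$-approximately isomorphic, which directly yields $\rho_\Delta(\frk{M},\frk{N})=0$ and hence $a_\Delta(\frk{M},\frk{N})=0$. Since $\lambda$ was an arbitrary uncountable cardinal, $T$ is $\Delta$-$\lambda$-categorical for every uncountable $\lambda$. There is no real obstacle here; the whole argument is a short assembly of already-established machinery, with the only subtlety being the observation that ``density character equals cardinality'' in the discrete setting so that the previous proposition can be invoked directly from weak categoricity.
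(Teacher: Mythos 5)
Your proposal is correct and is essentially the argument the paper intends: the corollary is stated without proof precisely because it follows by chaining the earlier corollary ($\Delta$-$\kappa$-categoricity gives weak $\Delta$-$\kappa$-categoricity), the weak-categoricity transfer theorem, and the immediately preceding proposition on $\Delta$-saturated models of discrete theories. Your explicit observation that discreteness makes density character equal cardinality (so the preceding proposition applies) and that $\rho_\Delta=0$ implies $a_\Delta=0$ correctly fills in the only details the paper leaves implicit.
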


%\begin{quest}
%If $T$ is $\mathrm{eGHK}$-$\kappa$-categorical for some $\kappa \geq \aleph_1$, does it follow that $T$ is $\kappa$-categorical? If $T$ is $\mathrm{eGHK}$-$\omega$-stable, does it follow that $T$ is $\omega$-stable? 
%\end{quest}

%REVERSE DIRECTION FOR DISCRETE APPROXIMATE ISOMORPHISM

\subsection{Some Examples and the Relationship between Different Notions of Categoricity}

\begin{table}
\centering
\begin{tabular}{l|l|l|l|l|}
\cline{2-5}
                                              				& $\omega_1$                     & $\mathrm{Lip}$-$\omega_1$      		& $\mathrm{GH}$-$\omega_1$      & None                          \\ \hline
\multicolumn{1}{|l|}{$\omega$}                		& Trivial                            	  & Unknown                        					    & Unknown                              & Trivial                            \\ \hline
\multicolumn{1}{|l|}{$\mathrm{Lip}$-$\omega$} 	 & Ex.\ \ref{ex:Lip-Iso}  		& Ex.\ \ref{ex:Lip-Lip}					  & Unknown                              & Ex.\ \ref{ex:Lip-NA}  \\ \hline
\multicolumn{1}{|l|}{$\mathrm{GH}$-$\omega$}	 & Ex.\ \ref{ex:GH-Iso}		 & Ex.\ \ref{ex:GH-Lip} 					& Ex.\ \ref{ex:GH-GH} 		& Ex.\ \ref{ex:GH-NA} \\ \hline
\multicolumn{1}{|l|}{None}                 			   & Trivial                          	    & Ex.\ \ref{ex:NA-Lip} 				 & Ex.\ \ref{ex:NA-GH}		& Trivial                             \\ \hline
\end{tabular}
%FIX ME IN THESIS FIX ME IN THESIS FIX ME IN THESIS FIX ME IN THESIS FIX ME IN THESIS FIX ME IN THESIS FIX ME IN THESIS FIX ME IN THESIS FIX ME IN THESIS 
%\caption*{Combinations of separable and inseparable ordinary, Lipschitz, and Gromov-Hausdorff categoricity for metric space theories.}% (Numbers refer to examples and dashes indicate trivial cases.)}
\caption[Combinations of approximate categoricity conditions.]{Known combinations of separable and inseparable ordinary, Lipschitz, and Gromov-Hausdorff categoricity for metric space theories.}% (Numbers refer to examples and dashes indicate trivial cases.)}
\label{table:table}
\end{table}

This section is a case study of the relationship between ordinary categoricity and Lipschitz and Gromov-Hausdorff approximate categoricity in the theories of metric spaces. The results are summarized in the Table \ref{table:table}, where `$\kappa \in \{\omega,\omega_1\}$' means $\kappa$-categorical, `$\mathrm{Lip}$-$\kappa$' means $\mathrm{Lip}$-$\kappa$-categorical and not $\kappa$-categorical, `$\mathrm{GH}$\nobreakdash-$\kappa$' means $\mathrm{GH}$-$\kappa$-categorical and not $\mathrm{Lip}$-$\kappa$-categorical, and `none' means not $\mathrm{GH}$\nobreakdash-$\kappa$\nobreakdash-categorical.  It's not hard to prove that $\delta_{\mathrm{Lip}}$ uniformly dominates $\delta_{\mathrm{GH}}$ and that therefore $\mathrm{Lip}$-$\kappa$-categoricity implies $\mathrm{GH}$-$\kappa$-categoricity. The boxes labeled `Trivial' are trivial in the sense that it is very easy to encode discrete structures in finite languages as metric spaces \cite{MetSpaUniv}, and to verify that such structures fall in the corresponding groups here. %(This isn't to say that some of the explicitly named examples aren't fairly trivial themselves.)
%The dashes indicate combinations that are trivial to construct, especially since it is very easy to encode discrete structures in finite languages as metric spaces \cite{MetSpaUniv}. 
Of course we haven't proven that $\Delta$-$\omega_1$-categoricity is equivalent to $\Delta$-$\kappa$-categoricity for all uncountable $\kappa$, but needless to say if we had a counterexample we would have mentioned it by now.

`Unknown' indicate combinations that are not currently known to be possible. There seems to be a general phenomenon where the combination of ordinary $\omega$\nobreakdash-categoricity and strictly approximate $\omega_1$-categoricity is impossible. In the case of theories with a $\{0,1\}$-valued metric (although allowing $[0,1]$-valued predicates), $\omega$-categoricity implies that the $\varnothing$-type spaces are all finite, so any such theory is interdefinable with a purely discrete theory. That said a distortion system for such a theory could still be non-trivial if the theory does not admit quantifier elimination to a finite language. For any $\omega$-categorical discrete theory that admits quantifier elimination to a finite language, all distortion systems are uniformly equivalent to isomorphism, so here we clearly have that $\Delta$-$\omega_1$-categoricitiy implies $\omega_1$-categoricity. This suggests a pair of purely discrete questions.

\begin{quest} \label{quest:disc-strat}
\textit{(i)} Does there exist a discrete theory $T$ in a stratified language $\Lcal$ such that $T$ is $\omega$-categorical but only approximately $\omega_1$-categorical?

\textit{(ii)} Does there exist a discrete theory $T$ in a stratified language $\Lcal=\bigcup_{i<\omega}\Lcal_i$ such that $T$ is $\omega$-categorical, $T\upharpoonright \Lcal_i$ is $\omega_1$-categorical for every $i<\omega$, but $T$ is not $\omega_1$-categorical?
\end{quest}

Note that a positive answer for part \textit{(ii)} would imply a positive answer for part \textit{(i)}. An example of \textit{(ii)} would have to be rather strange. It is easy to show that such a $T$ cannot have any Vaughtian pairs, so $T$ must fail to be $\omega$-stable. Since it is $\omega$-categorical this would imply that it is strictly stable. There is really only one known strictly stable $\omega$-categorical theory, constructed by Hrushovski, but it has a finite language, whereas an example of what we need would necessarily have an infinite language.

Now we turn to the examples in the chart. The following two examples are very similar. The idea is to encode a sequence of constants in a structureless set in increasingly `harder to detect' ways.

\begin{ex} \label{ex:Lip-Iso}
A metric space theory that is strictly $\mathrm{Lip}$-$\omega$-categorical and $\omega_1$\nobreakdash-\hskip0pt categorical.
\end{ex}
\begin{desc}
Let $\frk{M}$ be a metric space whose universe is $\omega \times \{0,1\}$ with $d((i,j),\allowbreak(k,\ell)) = 1$ if $i \neq k$ and $d((i,0),(i,1)) = \frac{1}{2} + 2^{-i-2}$. Let $T = \mathrm{Th}(\frk{M})$.
\end{desc}

\begin{ex} \label{ex:GH-Iso}
A metric space theory that is strictly $\mathrm{GH}$-$\omega$-categorical and $\omega_1$\nobreakdash-\hskip0pt categorical.
\end{ex}
\begin{desc}
Let $\frk{M}$ be a metric space whose universe is $\omega \times \{0,1\}$ with $d((i,j),\allowbreak(k,\ell)) = 1$ if $i \neq k$ and $d((i,0),(i,1)) = 2^{-i-1}$. Let $T = \mathrm{Th}(\frk{M})$.
\end{desc}
The next two examples are also similar to each other.
\begin{ex} \label{ex:GH-NA}
A metric space theory that is strictly $\mathrm{GH}$-$\omega$-categorical and not $\mathrm{GH}$\nobreakdash-$\omega_1$\nobreakdash-\hskip0pt categorical.
\end{ex}
\begin{desc}
Let $\frk{M}$ be a metric space whose universe is $[0,\frac{1}{2}]\times \{0,1\}$ with $d((x,i),\allowbreak(y,j))= 1$ if $x\neq y$ and $d((x,0),(x,1)) = x$.  Let $T = \mathrm{Th}(\frk{M})$.
\end{desc}

\begin{ex} \label{ex:Lip-NA}
A metric space theory that is strictly $\mathrm{Lip}$-$\omega$-categorical and not $\mathrm{GH}$\nobreakdash-$\omega_1$\nobreakdash-\hskip0pt categorical.
\end{ex}
\begin{desc}
Let $\frk{M}$ be a metric space whose universe is $[\frac{1}{4},\frac{1}{2}]\times \{0,1\}$ with $d((x,i),\allowbreak(y,j))= 1$ if $x\neq y$ and $d((x,0),(x,1)) = x$.  Let $T = \mathrm{Th}(\frk{M})$.
\end{desc}

The following Examples \ref{ex:NA-Lip} and \ref{ex:NA-GH} are the prototypes for the subsequent Examples \ref{ex:Lip-Lip}, \ref{ex:GH-Lip}, and \ref{ex:GH-GH}. The idea is to encode as a metric space a structure that is $\mathbb{Z}$-chains with maps into $[-1,1]$ of the form $\cos(n+\theta)$. Any two such chains are `approximately isomorphic' regardless of their values of $\theta$, since $1$ radian is an irrational rotation.

\begin{ex} \label{ex:NA-Lip}
A metric space theory that is not $\mathrm{GH}$-$\omega$-categorical and is strictly $\mathrm{Lip}$\nobreakdash-$\omega_1$\nobreakdash-\hskip0pt categorical.
\end{ex}
\begin{desc}
Let $\frk{M}$ be a metric space whose universe is $\mathbb{Z} \times \{0,1\}$ with $d((n,i),\allowbreak(m,j))=1$ if $|n-m| > 1$, $d((n,i),(n+1,j)) = \frac{1}{2}$, and $d((n,0),(n,1)) = \frac{1}{4} + \frac{1}{8}\cos(n)$. Let $T = \mathrm{Th}(\frk{M})$.
\end{desc}

\begin{ex} \label{ex:NA-GH}
A metric space theory that is not $\mathrm{GH}$-$\omega$-categorical and is strictly $\mathrm{GH}$-$\omega_1$\nobreakdash-\hskip0pt categorical.
\end{ex}
\begin{desc}
Let $\frk{M}$ be a metric space whose universe is $\mathbb{Z} \times \{0,1\}$ with $d((n,i),\allowbreak(m,j))=1$ if $|n-m| > 1$, $d((n,i),(n+1,j)) = \frac{1}{2}$, and $d((n,0),(n,1)) = \frac{1}{8} + \frac{1}{8}\cos(n)$. Let $T = \mathrm{Th}(\frk{M})$.
\end{desc}

Now we modify the previous examples with `increasingly hard to detect' constants, analogous to Examples \ref{ex:Lip-Iso} and \ref{ex:GH-Iso}. This forces the separable model to have infinitely many $\mathbb{Z}$-chains.

\begin{ex} \label{ex:Lip-Lip}
A metric space theory that is strictly $\mathrm{Lip}$-$\omega$-categorical and strictly $\mathrm{Lip}$-$\omega_1$-categorical.
\end{ex}
\begin{desc}
Let $\frk{M}$ be a metric space whose universe is $\mathbb{N} \times \mathbb{Z} \times {0,1}$ with 
\begin{itemize}
\item $d((a,n,i),(b,m,j))=1$ if $a\neq b$ or if $a=b$ and $|n-m| > 1$,
\item $d((a,n,i),(a,n+1,j)) = \frac{1}{2}$,
\item $d((a,n,0),(a,n,1)) =  \frac{1}{4} + \frac{1}{8}\cos(n)$ if $n\neq 0$, and
\item $d((a,0,0),(a,0,1)) = \frac{1}{4} + \frac{1}{8} + 2^{-a-4}$.
\end{itemize}
Let $T = \mathrm{Th}(\frk{M})$.
\end{desc}

\begin{ex} \label{ex:GH-Lip}
A metric space theory that is strictly $\mathrm{GH}$-$\omega$-categorical and strictly $\mathrm{Lip}$-$\omega_1$-categorical.
\end{ex}
\begin{desc}
Let $\frk{M}$ be a metric space whose universe is $(\mathbb{N} \times \mathbb{Z} \times {0,1})\cup(\mathbb{N}\times \{0\}\times \{2\})$ with 
\begin{itemize}
\item $d((a,n,i),(b,m,j))=1$ if $a\neq b$ or if $a=b$ and $|n-m| > 1$,
\item $d((a,n,i),(a,n+1,j)) = \frac{1}{2}$,
\item $d((a,n,0),(a,n,1)) =  \frac{1}{4} + \frac{1}{8}\cos(n)$ if $n\neq 0$,
\item $d((a,0,0),(a,0,1)) = \frac{1}{4} + \frac{1}{8}$,
\item $d((a,0,0),(a,0,2)) = \frac{1}{4} + \frac{1}{8} + 2^{-a-4}$, and
\item $d((a,0,1),(a,0,2)) = 2^{-a-4}$.
\end{itemize}
Let $T = \mathrm{Th}(\frk{M})$.
\end{desc}

\begin{ex} \label{ex:GH-GH}
A metric space theory that is strictly $\mathrm{GH}$-$\omega$-categorical and strictly $\mathrm{GH}$-$\omega_1$-categorical.
\end{ex}
\begin{desc}
Let $\frk{M}$ be a metric space whose universe is $(\mathbb{N} \times \mathbb{Z} \times {0,1})\cup(\mathbb{N}\times \{0\}\times \{2\})$ with 
\begin{itemize}
\item $d((a,n,i),(b,m,j))=1$ if $a\neq b$ or if $a=b$ and $|n-m| > 1$,
\item $d((a,n,i),(a,n+1,j)) = \frac{1}{2}$,
\item $d((a,n,0),(a,n,1)) =  \frac{1}{8} + \frac{1}{8}\cos(n)$ if $n\neq 0$,
\item $d((a,0,0),(a,0,1)) = \frac{1}{8} + \frac{1}{8}$,
\item $d((a,0,0),(a,0,2)) = \frac{1}{8} + \frac{1}{8} + 2^{-a-4}$, and
\item $d((a,0,1),(a,0,2)) = 2^{-a-4}$.
\end{itemize}
Let $T = \mathrm{Th}(\frk{M})$.
\end{desc}

If we try to use this construction to fill in the missing $\mathrm{Lip}$-$\omega$-categorical and $\mathrm{GH}$-$\omega_1$-categorical square by taking Example \ref{ex:NA-GH} and adding encoded constants in the style of Example \ref{ex:Lip-Lip}, what we get is a theory that is only $\mathrm{GH}$\nobreakdash-$\omega$\nobreakdash-categorical.

Note that these last 5 are also examples showing that a $\Delta$-$\omega_1$-categorical theory need not be unidimensional, since they contain many orthogonal types. It's even possible to modify this idea to get a discrete theory in a stratified language that is approximately uncountably categorical and yet not unidimensional (rather than using an irrational rotation of the circle along $\mathbb{Z}$-chains, use an `irrational rotation' of the $2$-adic integers along $\mathbb{Z}$-chains, or, roughly equivalently, the structure $\mathbb{N}$ together with predicates $U_i$ for each $i<\omega$ such that $U_i(n)$ is true if and only if the $i$th binary digit of $n$ is $1$), but curiously these examples seem to be limited to having trivial geometry in their types. There are also examples of strictly approximately uncountably categorical discrete theories with non-trivial geometries (such as the theory of the vector space $\mathbb{F}_p^\omega$ together with a sequence of predicates encoding projections onto the first $\omega$ $\mathbb{F}_p$ factors, with the obvious stratification), but these seem to be unidimensional. This suggests a question.

\begin{quest} \label{quest:uni}
If $T$ is a discrete theory in a stratified language $\Lcal$ which is approximately uncountably categorical and $T$ has minimal types with non-trivial geometry, does it follow that $T$ is unidimensional?
\end{quest}

Another natural question, which is related to Questions \ref{quest:disc-strat} and \ref{quest:uni}, arises from the observation that all of these examples are superstable.

\begin{quest}
If $T$ is (weakly) $\Delta$-$\kappa$-categorical for some uncountable $\kappa$, does it follow that $T$ is superstable? 
\end{quest}

\bibliographystyle{plain}
\bibliography{../ref}
%\bibliographystyle{abbrv}
%\bibliography{../refAbbr}

\vspace{1em}

\end{document}